\documentclass[12pt]{amsart}

\usepackage{amsfonts, amsthm, amsmath, amssymb}
\usepackage{hyperref}
\hypersetup{colorlinks=false}

\usepackage[margin=1.25in]{geometry}

\usepackage{helvet}

\newtheorem{theorem}{Theorem}
\newtheorem{Lemma}{Lemma}
\newtheorem{proposition}{Proposition}

\theoremstyle{definition}

\newtheorem{remark}{Remark}

\renewcommand{\d}{\mathrm{d}}
\renewcommand{\phi}{\varphi}
\renewcommand{\rho}{\varrho}

\markboth{Ritabrata Munshi}{Sub-Weyl bounds for $GL(2)$ $L$-functions}
\title[Sub-Weyl bounds for $GL(2)$ $L$-functions]{Sub-Weyl bounds for $GL(2)$ $L$-functions}

\author{Ritabrata\ Munshi}
\address{School of Mathematics\\ 
Tata Institute of Fundamental Research\\
1 Homi Bhabha Road\\ Colaba\\Mumbai 400005\\ India}
\curraddr{Statistics and Mathematics Unit\\ Indian Statistical Institute\\ 203 B.T. Road\\ Kolkata 700108\\ India} 
\email{rmunshi@math.tifr.res.in, ritabratamunshi@gmail.com}

\subjclass[2010]{11F66}
\keywords{subconvexity, zeta function}


\begin{document}

\begin{abstract}
We obtain a sub-Weyl bound in the $t$-aspect for $L(1/2+it,F)$ where $F$ is a Hecke modular form.
\end{abstract}

\maketitle


\section{Introduction}
\label{introd}

In 1982, A. Good \cite{Go} extended the classic bound of Weyl, Hardy and Littlewood (\cite{Lan}, \cite{We}) to degree two $L$-functions. More precisely, for holomorphic Hecke cusp forms $F$, he proved that the associated $L$-functions satisfy
$$
L(1/2+it,F)\ll t^{1/3+\varepsilon}
$$ 
on the central line. A simpler proof of this result was given  by Jutila \cite{Ju}, and it was generalised to the case of Maass wave forms by Meurman \cite{Me}. (The case of general level has been addressed recently in \cite{BMN}.) Though in the last hundred years the bound on the Riemann zeta function has been improved, albeit mildly (Bourgain's recent work  \cite{Bou} gives the exponent $1/6-1/84$), Good's bound has so far remained unsurpassed. Indeed sub-Weyl bounds have been rarely achieved in the long history of subconvexity for degree one $L$-functions (see \cite{Mil} for another instance). But the Weyl bound has never been breached for higher degrees. The purpose of this paper is to obtain a sub-Weyl bound using the $GL(2)$ delta method, as introduced in \cite{Mu5}, \cite{Mu16}. In fact we will add one more layer in this method by introducing an extra averaging over the spectrum. This is a conductor lowering mechanism and it is effective to deal with the subconvexity problem in the $t$-aspect (and spectral aspect). We now state the main result of this paper. (We have not tried to obtain the best possible exponent.)\\

\begin{theorem}
\label{mthm}
Let $t>1$. Suppose $F$ is a Hecke cusp form for $SL(2,\mathbb{Z})$,
then 
\begin{align}
\label{main-bound}
L\left(\tfrac{1}{2}+it,F\right)\ll t^{\frac{1}{3}-\frac{1}{1200}+\varepsilon}.
\end{align}
\end{theorem}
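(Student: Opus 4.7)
The plan is to follow the GL(2) delta-method framework of \cite{Mu5,Mu16} with the promised extra spectral averaging serving as a conductor-lowering device. By the approximate functional equation together with a standard dyadic partition, it suffices to save the factor $t^{1/6+1/1200+\varepsilon}$ beyond the trivial bound on smoothed sums
\begin{equation*}
S(N) \;=\; \sum_{n} \lambda_F(n)\, n^{-it}\, V(n/N), \qquad N\asymp t,
\end{equation*}
the other ranges of $N$ being strictly easier to handle.

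The first step is to introduce a dummy variable $m$ and detect $n=m$ using the Petersson trace formula for a spectral family $\mathcal{F}$ of holomorphic Hecke cusp forms of level one and even weight in a dyadic interval of size $K$. Schematically,
\begin{equation*}
\delta(n=m) \;\approx\; \frac{1}{|\mathcal{F}|}\sum_{g\in\mathcal{F}} \lambda_g(n)\,\overline{\lambda_g(m)} \;+\; (\text{Kloosterman off-diagonal}).
\end{equation*}
This is composed with a standard DFI delta symbol detecting a residual congruence $n\equiv m\pmod{q}$ with $q\sim Q$, so that the effective modulus governing the eventual Voronoi duals is reduced by the extra factor $K$ provided by the spectral family; this is the extra layer of averaging announced in the introduction. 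Substituting into $S(N)$ produces a double sum in $n$ and $m$ in which $m$ carries the weight $\lambda_F(m)\overline{\lambda_g(m)}\,m^{-it}\,e(-am/q)$ and $n$ carries the weight $\lambda_g(n)\,e(an/q)$, with auxiliary sums over $g\in\mathcal{F}$, $q\sim Q$, and $a\bmod q$.

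The next stage is to apply Voronoi summation on both variables: GL(2) Voronoi for $F\otimes g$ on the $m$-side (after Mellin-extracting $m^{-it}$ and locating the stationary phase against the resulting Bessel kernel) and GL(2) Voronoi for $g$ on the $n$-side. After dualisation the oscillation coming from $n^{-it}$ is essentially resolved, leaving short arithmetic sums of Hecke eigenvalues of $g$ twisted by Kloosterman factors. Cauchy--Schwarz in the variables $(a,q)$ then removes the $F$-dependence; opening the square and invoking the Kuznetsov--Deshouillers--Iwaniec spectral large sieve for $\mathcal{F}$ handles the $g$-average, reducing everything to a shifted-convolution count which one estimates by a further Poisson summation in the shift.

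The main obstacle is the joint calibration of the free parameters $Q$ and $K$ (with $N\asymp t$) so that simultaneously (i) the Petersson off-diagonal Kloosterman contribution is negligible, (ii) the two Voronoi duals land at lengths that keep the total sum below the Weyl benchmark, and (iii) the shifted-convolution estimate after Cauchy--Schwarz produces a genuine diagonal saving rather than a wash. Each transformation contributes a rigid trade-off between ranges, and the sub-Weyl saving survives only in a narrow window of parameter space; the exponent $1/1200$ is the natural residue of this delicate balance.
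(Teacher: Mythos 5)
Your outline captures the broad philosophy of the paper -- a $GL(2)$ delta method with an extra spectral ($k$-) average as a conductor-lowering device, followed by dualisation, Cauchy--Schwarz, Poisson, and a counting problem -- but several of the structural choices you describe diverge from what is actually done, and a few are genuine gaps that would keep the sketch from closing.

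First, the paper does \emph{not} compose a level-one Petersson formula with a separate DFI delta symbol detecting $n\equiv m\pmod q$. Instead it uses the Petersson formula directly for newforms of prime level $q\in\mathcal{Q}$ with an \emph{odd nebentypus} $\psi$ and odd weight $k\sim K$. The nebentypus plays an essential role: on the dual side (after the functional equation of $L(s,F\times f)$) the twist $\bar\psi(m)$ annihilates the diagonal because $\psi(m)=0$ whenever $q\mid m$, so the dualised Fourier sum $\mathcal{F}$ has only off-diagonal contribution. Your DFI-plus-level-one version has no such mechanism; you would be left to cope with a genuine dual diagonal. Second, the paper's conductor lowering is twofold: besides the $k$-sum it expands $m\mapsto m\ell^2\sim N$ with the character twist $\psi(\ell)$, which is crucial for making the dual length $\tilde N\asymp Q^2K^4/N$ favourable. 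Your sketch omits the $\ell$-variable entirely. Third, and most consequentially, the paper never performs Voronoi on the $n$-side. The $n^{it}$ oscillation is resolved by introducing \emph{Weyl shifts} $h\sim H$ with the precise calibration $H=N/t^{1/3}$, applying Poisson on $h$, and carrying out a stationary phase analysis on the resulting quadratic phase; the choice $H=N/t^{1/3}$ is exactly what truncates the Taylor expansion of the phase at the quadratic term. Your claim that ``after dualisation the oscillation from $n^{-it}$ is essentially resolved'' is doing the hard work of the whole argument silently. Finally, the terminal estimate is not a generic shifted-convolution bound: after two applications of Cauchy--Schwarz and two applications of Poisson one arrives at a lattice-point/Farey-fraction count in the variables $(h_i,h_i',c_i,c_i',m_i)$, and the saving at that stage comes from the nontrivial counting lemmas of Bombieri--Iwaniec (\cite{BI}, Lemma~4.1--4.2), supplemented by the Weil bound for Kloosterman sums in the generic range of $m_2$. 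Without that input the parameters $Q,K$ cannot be chosen to yield a genuine sub-Weyl exponent. In short: the spectral-averaging intuition is right, but the Weyl shift with $H=N/t^{1/3}$, the $m\ell^2$ factorisation with odd nebentypus, and the Bombieri--Iwaniec count are all load-bearing, and your proposal is missing each of them.
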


 \bigskip
 
\noindent  
This is the first instance of a sub-Weyl bound for an $L$-function beyond degree one. The reader will notice that our argument also works for Eisenstein series (and possibly can be extended to Maass forms) and in particular yields a weak, nevertheless sub-Weyl, bound for the Riemann zeta function with exponent $1/6-1/2400$. But refining our counting method and complementing our argument by the classical theory of exponent pairs (not even going beyond Titchmarsh \cite{T}), we can obtain far better bounds. Further results in this direction will appear in an upcoming paper. Also it is conceivable that our new method can be used to break the long standing Voronoi barrier $O(x^{1/3+\varepsilon})$ for the `divisor problem' for cusp forms 
$$\sum_{n\leq x}\lambda_F(n).$$

\bigskip


\section{The set-up}

Suppose $F$ is a holomorphic Hecke cusp form of weight $k_0$ for $SL(2,\mathbb{Z})$ with normalised Fourier coefficients $\lambda_F(n)$, so that the Fourier expansion is given by
$$
F(z)=\sum_{n=1}^\infty \lambda_F(n)n^{(k_0-1)/2}e(nz)
$$
with $e(z)=e^{2\pi iz}$. The associated Hecke $L$-function is given by the Dirichlet series
$$
L(s,F)=\sum_{n=1}^\infty \frac{\lambda_F(n)}{n^s}
$$
in the half plane $\text{Re}(s)=\sigma>1$. This extends to an entire function and satisfies the Hecke functional equation. A consequence of which is the following bound.\\

\begin{Lemma}
We have
\begin{align*}
L\left(\tfrac{1}{2}+it,F\right)\ll t^\varepsilon \sup_N \frac{|S(N)|}{N^{1/2}} + t^{(1-\theta)/2}
\end{align*}
where the supremum is taken over $t^{1-\theta}<N<t^{1+\varepsilon}$, and $S(N)$ are sums of the form
\begin{align*}
S(N)=\sum_{N<n\leq 2N} \lambda_F(n)n^{it}.
\end{align*}
\end{Lemma}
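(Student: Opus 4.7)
The lemma is a standard dyadic reduction of the central value via the approximate functional equation, and requires no input beyond the analytic continuation and functional equation of $L(s,F)$.

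The first step is to invoke the approximate functional equation for $L(s,F)$. Since $L(s,F)$ has degree two with analytic conductor $\asymp t^{2}$ at height $t$, the AFE expresses
\begin{align*}
L(\tfrac{1}{2}+it,F)=\sum_{n\geq 1}\frac{\lambda_F(n)}{n^{1/2+it}}V\!\left(\frac{n}{t}\right)+\epsilon\sum_{n\geq 1}\frac{\lambda_F(n)}{n^{1/2-it}}\tilde V\!\left(\frac{n}{t}\right),
\end{align*}
where $V,\tilde V$ are smooth weight functions which are $\equiv 1$ for $x\leq 1$ and of rapid decay for $x\geq 1$, and $\epsilon$ is a unimodular factor coming from the gamma factor $\Gamma_{\mathbb{C}}(s+(k_0-1)/2)$ and the root number. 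Since $\lambda_F(n)\in\mathbb{R}$, complex conjugation interchanges the two sums, so it is enough to control sums of the stated shape $\sum\lambda_F(n)n^{it}$.

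Next I would insert a smooth dyadic partition of unity in $n$, localizing each of the two sums to $n\asymp N$ for dyadic $N$, producing $O(\log t)$ pieces of the form
\begin{align*}
\sum_{n\geq 1}\frac{\lambda_F(n)}{n^{1/2+it}}W\!\left(\frac{n}{N}\right),
\end{align*}
with $W$ a fixed smooth bump supported in $[1,2]$. Pulling out $N^{-1/2}$ bounds each piece by $N^{-1/2}$ times the smoothed analogue of $S(N)$; passing from the smooth cutoff to the sharp dyadic cutoff in the definition of $S(N)$ is a standard Mellin-transform/partial-summation exercise costing only a factor $t^{\varepsilon}$. The rapid decay of $V,\tilde V$ renders the contribution from $N>t^{1+\varepsilon}$ negligible.

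Finally, for the small range $N\leq t^{1-\theta}$ I would use the trivial estimate
\begin{align*}
|S(N)|\leq\sum_{N<n\leq 2N}|\lambda_F(n)|\ll N^{1+\varepsilon},
\end{align*}
which follows from Cauchy--Schwarz together with Rankin--Selberg ($\sum_{n\leq N}|\lambda_F(n)|^{2}\ll N$), or from Deligne's bound $|\lambda_F(n)|\leq d(n)$. This yields $|S(N)|/N^{1/2}\ll N^{1/2+\varepsilon}\leq t^{(1-\theta)/2+\varepsilon}$, which is absorbed into the claimed second term. Taking the supremum over the $O(\log t)$ surviving dyadic values of $N\in(t^{1-\theta},t^{1+\varepsilon})$ and absorbing logarithmic losses into $t^{\varepsilon}$ produces the stated inequality. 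The only mildly technical step is the conversion of the smooth cutoff $W$ to the sharp cutoff in $S(N)$, but this is routine and poses no real obstacle.
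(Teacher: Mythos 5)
Your proposal is correct and takes essentially the same route as the paper, which simply cites the approximate functional equation; you have merely supplied the standard details (AFE with cutoff at $n\asymp t$ from the degree-two conductor $\asymp t^2$, dyadic decomposition, and the trivial bound via Rankin--Selberg or Deligne for $N\le t^{1-\theta}$). The smooth-to-sharp cutoff conversion you flag is indeed routine, and the paper's own remark notes one could even keep the smoothed version.
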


\begin{proof}
This follows from the approximate functional equation, which itself is consequence of the functional equation.
\end{proof}

\begin{remark}
In fact one has a smoothed version of the sum $S(N)$, but we will not require that extra advantage.
\end{remark}

 The trivial bound for $S(N)$ gives the convexity bound $L(1/2+it,F)\ll |t|^{1/2+\varepsilon}$ for $|t|>1$. The generalised Riemann Hypothesis predicts square-root cancellation in these sums. For sub-Weyl one would need to show strong cancellations in $S(N)$. To this end, our first step consists of introducing the Weyl shifts
\begin{align*}
S(N)=\sum_{N<n\leq 2N} \lambda_F(n+h)(n+h)^{it}+O(Ht^\varepsilon),
\end{align*}
where $h\sim H\ll \sqrt{N}t^{1/3-\delta}$ for some $\delta>0$. (For the error term we are applying the Deligne bound, but all one needs is a Ramanujan bound on average.) It follows that 
$$
S(N)=S^\star(N)+O(Ht^\varepsilon)
$$
where
$$
S^\star(N)=\frac{1}{H}\sum_{h\in\mathbb{Z}}W\left(\frac{h}{H}\right)\:\sum_{N<n\leq 2N} \lambda_F(n+h)(n+h)^{it},
$$
with $W$ a non-negative valued smooth bump function (i.e. $W^{(j)}\ll_j 1$) supported on $[1,2]$ with $\int W=1$.\\

We will use the $GL(2)$ delta method  to analyse the sum $S^\star(N)$. Let $\mathcal{Q}\subset [Q,2Q]$ be a chosen set of prime numbers with $|\mathcal{Q}|\gg Q^{1-\varepsilon}$. For $q\in \mathcal{Q}$ let $\psi$ be an odd character of $\mathbb{F}_q^\times$. Let $H_k(q,\psi)$ be the set of Hecke-normalized newforms which is an orthogonal Hecke basis of the space of cusp forms $S_k(q,\psi)$. We will use the Petersson trace formula. We recall the following standard notations - $\lambda_f(n)$ denotes the Hecke-normalised $n$-th Fourier coefficient of the form $f$, 
$\omega_f^{-1}$ denotes the spectral weight, $S_\psi(a,b;c)$ denotes the generalised Kloosterman sum and $J_{k-1}(x)$ denotes the Bessel function of order $k-1$. 
Consider the (Fourier) sum 
\begin{align}
\label{F}
\mathcal{F} =&\sum_{\substack{k=1\\k\:\text{odd}}}^\infty W\left(\frac{k-1}{K}\right) \;\sum_{q\in \mathcal{Q}}\;\sum_{\substack{\psi\bmod{q}\\\psi(-1)=- 1}}\:\sum_{f\in H_k(q,\psi)}\omega_f^{-1}\\
\nonumber\times &\mathop{\sum\sum}_{m,\ell=1}^\infty \lambda_F(m)\lambda_f(m)\psi(\ell)U\left(\frac{m\ell^2}{N}\right)\\
\nonumber \times &\mathop{\sum\sum}_{\substack{N<n\leq 2N\\ h\in\mathbb Z}} \overline{\lambda_f(n
+h)}(n+h)^{it}\:W\left(\frac{h}{H}\right).
\end{align}
Here  $U$ is a smooth function supported in $[1/2,3]$, with $U(x)=1$ for $x\in [1,2]$, and satisfying $U^{(j)}(y)\ll_j 1$. The (direct) off-diagonal is defined as
\begin{align}
\label{O}
\mathcal{O} =&\sum_{\substack{k=1\\k\:\text{odd}}}^\infty W\left(\frac{k-1}{K}\right) \;\sum_{q\in\mathcal{Q}}\;\sum_{\substack{\psi\bmod{q}\\\psi(-1)=- 1}}\:\\
\nonumber\times &\mathop{\sum\sum}_{m,\ell=1}^\infty \lambda_F(m)\psi(\ell)U\left(\frac{m\ell^2}{N}\right)\:\mathop{\sum\sum}_{\substack{N<n\leq 2N\\ h\in\mathbb Z}} (n+h)^{it}W\left(\frac{h}{H}\right)\\
\nonumber \times & \sum_{c=1}^\infty \frac{i^{-k}S_\psi(n+h,m;cq)}{cq}J_{k-1}\left(\frac{4\pi\sqrt{m(n+h)}}{cq}\right).
\end{align}\\

\begin{Lemma}
\label{lemma:f-o}
We have
\begin{align}
\label{s-star-n}
\frac{|S(N)|}{\sqrt{N}}\ll \frac{|\mathcal{F}|+|\mathcal{O}|}{\sqrt{N}HQ^2K}+\frac{Ht^\varepsilon}{N^{1/2}}.
\end{align}
\end{Lemma}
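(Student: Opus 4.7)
The plan is to recognise $\mathcal F$ as the outcome of applying the Petersson trace formula to the $f$-sum: the Kronecker-delta diagonal of Petersson reproduces a large positive multiple of $S^\star(N)$, while the geometric (Kloosterman) side is exactly $\mathcal O$ up to a harmless constant. The relevant formula is
$$\sum_{f\in H_k(q,\psi)}\omega_f^{-1}\lambda_f(m)\overline{\lambda_f(n+h)} \;=\; \delta_{m=n+h} \;+\; 2\pi i^{-k}\sum_{c=1}^\infty\frac{S_\psi(n+h,m;cq)}{cq}\,J_{k-1}\!\left(\frac{4\pi\sqrt{m(n+h)}}{cq}\right).$$
Plugged into the inner $f$-sum of \eqref{F}, the Kloosterman piece reproduces $2\pi\mathcal O$, so $\mathcal F = \mathcal D + 2\pi\mathcal O$, with $\mathcal D$ denoting the diagonal contribution.

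To evaluate $\mathcal D$, I would set $m = n+h$. For $n\in(N,2N]$ and $|h|\sim H\ll N$, the weight $U((n+h)\ell^2/N)$ forces $\ell=1$, since any $\ell\geq 2$ yields $(n+h)\ell^2/N \geq 4$, outside the support $[1/2,3]$ of $U$. Then $\psi(\ell)=\psi(1)=1$, and the $\psi$-sum collapses to the count $(q-1)/2$ of odd Dirichlet characters modulo the prime $q$. Summing over $q\in\mathcal Q$ (of cardinality $\gg Q^{1-\varepsilon}$, with each $q\asymp Q$) and over odd $k$ weighted by $W((k-1)/K)$ produces a positive real prefactor $A\gg KQ^{2-\varepsilon}$. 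What remains is
$$\sum_h W(h/H)\sum_{N<n\leq 2N}U\!\left(\tfrac{n+h}{N}\right)\lambda_F(n+h)(n+h)^{it},$$
which differs from $HS^\star(N)$ only on the $O(H^2)$ boundary pairs where $n+h>2N$, contributing $O(H^2 t^\varepsilon)$ by the Deligne bound $|\lambda_F|\ll t^\varepsilon$. Therefore $\mathcal D = A\cdot HS^\star(N) + O(A H^2 t^\varepsilon)$.

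Solving $\mathcal F = \mathcal D + 2\pi\mathcal O$ for $S^\star(N)$ and using $A\gg KQ^{2-\varepsilon}$ yields $|S^\star(N)|\ll (|\mathcal F|+|\mathcal O|)/(KQ^2 H) + H t^\varepsilon$ (the extra $t^\varepsilon$ from $A$ absorbed into the implicit constant). The Weyl-shift identity $S(N)=S^\star(N)+O(Ht^\varepsilon)$ stated earlier, followed by division by $\sqrt N$, then gives \eqref{s-star-n}. The main obstacle is essentially conceptual rather than technical: once one sees that the elaborate average \eqref{F} is nothing other than Petersson's orthogonality applied to our shifted sum, the remaining work — exploiting the tight support of $U$ to force $\ell=1$, counting odd characters and primes, and disposing of the $U$-boundary via Deligne — is routine. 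Nothing about the size of $\mathcal O$ enters at this stage; it is merely named, not estimated.
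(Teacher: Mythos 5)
Your proposal is correct and follows the same route as the paper: apply the Petersson trace formula to the inner $f$-sum of \eqref{F}, observe that the diagonal $m=n+h$ forces $\ell=1$ (since $(n+h)\ell^2/N>\ell^2$ would leave the support $[1/2,3]$ of $U$), evaluate the $\psi$-, $q$- and $k$-sums to a prefactor $\asymp KQ^2$, and identify the Kloosterman side with $\mathcal O$. You are in fact a bit more scrupulous than the paper, which silently drops the residual weight $U((n+h)/N)$; your $O(AH^2t^\varepsilon)$ boundary term is the correct price, and after dividing by $AH$ it is of the same order $O(Ht^\varepsilon)$ as the Weyl-shift error, so the conclusion is unaffected.
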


\begin{proof}
We apply the Petersson trace formula to $\mathcal{F}$. The diagonal term  corresponds to $m=n+h$, in which case we are left with the weight function $U((n+h)\ell^2/N)$, with $N+H<n+h\leq 2N+2H$. Considering the supports of the functions, we see that the weight function is vanishing if $\ell> 1$. Hence the diagonal is given by
\begin{align*}
&\sum_{\substack{k=1\\k\:\text{odd}}}^\infty W\left(\frac{k-1}{K}\right)\sum_{q\in \mathcal{Q}}\:\sum_{\substack{\psi\bmod{q}\\\psi(-1)=- 1}}\; \mathop{\sum\sum}_{\substack{N<n\leq 2N\\ h\in \mathbb Z}} \lambda_F(n+h)(n+h)^{it}W\left(\frac{h}{H}\right)
\end{align*}
which equals $F^\star S^\star(N)$,
where
\begin{align*}
F^\star =H\sum_{\substack{k=1\\(-1)^k=-1}}^\infty W\left(\frac{k-1}{K}\right)\;\sum_{q\in\mathcal{Q}}\sum_{\substack{\psi\bmod{q}\\\psi(-1)=- 1}}\;1\asymp HQ^2K.
\end{align*}
The off-diagonal in the Petersson formula yields the sum $\mathcal{O}$. The lemma follows.
\end{proof}

In the rest of the paper we will prove sufficient bounds for the off-diagonal $\mathcal{O}$ and the Fourier sum $\mathcal{F}$.

\bigskip

\section{The off-diagonal}
\label{off-diag-sec}
In this section we will analyse the off-diagonal $\mathcal{O}$ which is given in \eqref{O}. Our aim is to prove Proposition~\ref{prop-1} which is stated at the end of this section. We begin by recalling the following formula for the sum of Bessel functions.\\

\begin{Lemma}
\label{lemma:sum-bessel}
Suppose $g$ is a compactly supported smooth function on $\mathbb{R}$ and $x\in \mathbb{R}$. We have
$$
4\sum_{u\equiv a\bmod{4}} g(u)J_u(x)=\int_\mathbb{R}\hat{g}(v)c_a(v;x)\mathrm{d}v
$$
where
$$
c_a(v;x)=-2i\sin(x\sin 2\pi v)+2i^{1-a}\sin(x\cos 2\pi v)
$$
and the Fourier transform is defined by
$$
\hat{g}(v)=\int g(u)e(uv)\mathrm{d}u.
$$
\end{Lemma}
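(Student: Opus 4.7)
The approach I would take is to combine the Jacobi--Anger integral representation $J_n(x) = \int_{-1/2}^{1/2} e^{ix\sin 2\pi v}\, e(-nv)\, dv$ for integer $n$ with orthogonality modulo $4$ and Poisson summation. First I would detect the congruence via
$$\mathbf{1}_{u \equiv a \,(\mathrm{mod}\, 4)} = \frac{1}{4}\sum_{b=0}^{3} i^{\,b(u-a)},$$
so that $4\sum_{u \equiv a\,(4)} g(u)\, J_u(x) = \sum_{b=0}^{3} i^{-ab}\, S_b$, where
$$S_b = \sum_{u\in\mathbb{Z}} g(u)\, J_u(x)\, i^{\,bu}.$$
This reduces the problem to evaluating the four sums $S_b$.

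Next I would plug in the Jacobi--Anger integral and interchange the order of summation and integration to write
$$S_b = \int_{-1/2}^{1/2} e^{ix\sin 2\pi v}\, \sum_{u\in\mathbb{Z}} g(u)\, e\bigl(u(b/4 - v)\bigr)\, dv.$$
Poisson summation on the inner sum converts it into $\sum_{m\in\mathbb{Z}} \widehat{g}(m + b/4 - v)$. Substituting this back and performing, for each $m$, the translation $v \mapsto v - m - b/4$, the $2\pi$-periodicity of $\sin$ makes the resulting exponential equal to $e^{ix\sin(\pi b/2 - 2\pi v)}$ independently of $m$, while the shifted intervals $[-1/2 - m - b/4,\, 1/2 - m - b/4]$ tile the real line; the $m$-sum therefore collapses into
$$S_b = \int_{\mathbb{R}} \widehat{g}(v)\, e^{ix \sin(\pi b/2 - 2\pi v)}\, dv.$$

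The last step is to evaluate $\sin(\pi b/2 - 2\pi v)$, which equals $-\sin 2\pi v,\; \cos 2\pi v,\; \sin 2\pi v,\; -\cos 2\pi v$ for $b = 0, 1, 2, 3$ respectively. Assembling $\sum_{b=0}^{3} i^{-ab}\, S_b$, the $b=0, 2$ terms pair up (using $(-1)^a = -1$ for the odd values of $a$ relevant to the application, where $u = k$ is odd) and combine via $e^{iX} - e^{-iX} = 2i\sin X$ into the contribution $-2i\sin(x\sin 2\pi v)$; in parallel, the $b = 1, 3$ terms combine, using $i^{-a} + i^{a} = 0$ and $i(i^{-a} - i^{a}) = 2\,i^{\,1-a}$ for odd $a$, into $2\,i^{\,1-a}\sin(x\cos 2\pi v)$. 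These sum to exactly $c_a(v;x)$, proving the lemma. The only real obstacle is the bookkeeping: tracking the four congruence classes and checking the trigonometric collapse. Justifying the interchanges of summation and integration is routine, since $g \in C_c^\infty$ forces $\widehat{g}$ to be Schwartz, so every series and integral involved is absolutely convergent.
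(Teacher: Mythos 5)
Your argument is a complete, self-contained proof; the paper, by contrast, simply cites Iwaniec's book without giving any argument, so your route is of necessity different and more informative. The ingredients --- the Jacobi--Anger integral $J_n(x)=\int_{-1/2}^{1/2}e^{ix\sin 2\pi v}e(-nv)\,\mathrm{d}v$, detection of $u\equiv a\bmod{4}$ by the additive characters $i^{b(u-a)}$, interchange of sum and integral (justified because $\hat g$ is Schwartz), Poisson summation producing $\sum_m\hat g(m+b/4-v)$, and the periodicity/tiling trick to collapse the $m$-sum into a single integral over $\mathbb{R}$ --- are all deployed correctly, and the final trigonometric bookkeeping is accurate.

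One point worth underscoring is the one you already flag at the end: the reduction of $\sum_{b=0}^3 i^{-ab}S_b$ to the stated $c_a(v;x)$ uses $(-1)^a=-1$ and $i^{-a}=-i^{a}$, so your argument establishes the Lemma for \emph{odd} $a$. For even $a$ the very same derivation produces $2\cos(x\sin 2\pi v)+2\,i^{a}\cos(x\cos 2\pi v)$ on the right instead, which is genuinely different from $c_a$; a quick sanity check (take $a=0$, $g$ concentrated near $u_0=0$, and let $x\to 0$: the left side tends to $4g(0)\neq 0$ while $\int\hat g(v)\,c_0(v;x)\,\mathrm{d}v\to 0$) confirms that the printed formula fails for even $a$. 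So what your proof really shows is that the Lemma, as stated without a parity restriction on $a$, tacitly assumes $a$ odd --- a clarification that the paper's one-line citation to Iwaniec passes over.
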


\begin{proof}
See \cite{Iw-book}.
\end{proof}

The following lemma from \cite{BKY}, which provides a criterion for an exponential integral to be negligibly small, will also play a crucial role in our analysis.\\

\begin{Lemma}
\label{lemma:small-int}
Let $Y\geq 1$, $X,Q,U,R>0$. Let $w$ be a smooth weight function supported on $[\alpha,\beta]$, satisfying $w^{(j)}\ll_j XU^{-j}$, and let $h$ be a real valued smooth function on the same interval such that $|h'|\geq R$, and $h^{(j)}\ll_j YQ^{-j}$ for $j\geq 2$. Then 
\begin{align}
\label{lem-neg}
\int_{\mathbb{R}} w(y)e(h(y))\mathrm{d}y \ll_A (\beta-\alpha)X\left[\left(\frac{QR}{\sqrt{Y}}\right)^{-A}+(RU)^{-A}\right],
\end{align}
for any $A\geq 1$. In particular the integral is `negligibly small', i.e. $O(t^{-A})$ for any $A>1$, if $R\gg t^\varepsilon\max\{Y^{1/2}/Q, 1/U\}$ and $t\gg [(\beta-\alpha)X]^\varepsilon$.
\end{Lemma}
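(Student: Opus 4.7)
I would prove this by the standard non-stationary phase technique: $A$-fold integration by parts, exploiting $|h'|\geq R$ via the identity $e(h(y)) = (2\pi i\, h'(y))^{-1}\, \tfrac{d}{dy}\, e(h(y))$. Concretely, introduce the operator $Lg := \tfrac{d}{dy}\bigl(g/(2\pi i\, h')\bigr)$; because $w$ is smooth and compactly supported inside $[\alpha,\beta]$, all boundary terms in integration by parts vanish, and iterating $A$ times yields
$$\int_\mathbb{R} w(y)\, e(h(y))\, \mathrm dy = (-1)^A \int_\mathbb{R} (L^A w)(y)\, e(h(y))\, \mathrm dy.$$

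The technical heart is a pointwise estimate for $L^A w$. Expanding $L^A w$ by Leibniz and Faà di Bruno produces an $O_A(1)$-sized sum of terms, each a product of a derivative of $w$ and a rational expression in derivatives of $h$ divided by a power of $h'$. Combining $|w^{(j)}|\ll_j X/U^j$, $|h^{(j)}|\ll_j Y/Q^j$ for $j\geq 2$, and $|h'|\geq R$, each such term is bounded by $XM^A$, where
$$M := \max\!\bigl(1/(RU),\, \sqrt Y/(QR)\bigr).$$
The heuristic is that each application of $L$ contributes a gain of order $M$: a derivative landing on the $w$-factor costs at most $1/(RU)$, while a derivative landing on $1/h'$ produces $-h''/(h')^2$ of size $\ll Y/(Q^2 R^2) = (\sqrt Y/(QR))^2$, i.e.\ a gain of order $M^2$ for every two derivatives so distributed. (The bound $|L^A w|\ll_A XM^A$ is only informative when $M\leq 1$; in the complementary regime, \eqref{lem-neg} is already dominated by the trivial estimate $|\int w e(h)|\ll (\beta-\alpha)X$, so nothing is lost.)

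Substituting the pointwise estimate back into the iterated-by-parts identity and integrating trivially over the support $[\alpha,\beta]$ gives
$$\Bigl|\int_\mathbb{R} w(y) e(h(y))\, \mathrm dy\Bigr| \ll_A (\beta-\alpha) X M^A \ll_A (\beta-\alpha) X\bigl[(RU)^{-A}+(QR/\sqrt Y)^{-A}\bigr],$$
which is \eqref{lem-neg}. The ``negligibly small'' assertion follows immediately: under $R\gg t^\varepsilon\max(\sqrt Y/Q,\,1/U)$ and $t\gg [(\beta-\alpha)X]^\varepsilon$, both summands on the right are $\ll t^{-A\varepsilon}$, with $A$ at our disposal.

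The principal obstacle will be the bookkeeping in the pointwise estimate for $L^A w$: the Faà di Bruno expansion is intricate, but because the implied constant may depend on $A$, only crude per-term estimates are needed and a short induction on $A$ suffices.
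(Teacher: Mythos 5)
Correct, and it is the argument of the cited source: the paper gives no proof of this lemma, referring instead to \cite{BKY} (Lemma~8.1 there), whose proof is exactly the repeated integration by parts with $Lg=\tfrac{d}{dy}\bigl(g/(2\pi i\,h')\bigr)$ that you describe. Your bookkeeping caveat is the right one to flag: a derivative falling on the $1/(h')^{c}$ factor yields a per-step factor $\ll Y/(Q^{2}R^{2})=\bigl(\sqrt{Y}/(QR)\bigr)^{2}$ rather than $\sqrt{Y}/(QR)$ (and a derivative falling on an already-present $h^{(b)}$ costs $\ll 1/(QR)\leq\sqrt{Y}/(QR)$ only because $Y\geq1$), so the clean inductive bound $|L^{A}w|\ll_{A}XM^{A}$ with $M=\max\{1/(RU),\sqrt{Y}/(QR)\}$ holds in the regime $M\leq1$, which, as you correctly note, is exactly the regime where \eqref{lem-neg} improves on the trivial estimate $(\beta-\alpha)X$.
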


For $C\ll Nt^\varepsilon/QK^2$ dyadic we introduce the family of sums
\begin{align}
\label{O-after-k-sum}
\mathcal{O}(C):=&\frac{K}{\sqrt{NCQ}}\sum_{q\in\mathcal{Q}}\;\sum_{\substack{\psi\bmod{q}\\\psi(-1)=- 1}}\:\mathop{\sum\sum}_{m,\ell=1}^\infty \lambda_F(m)\psi(\ell)U\left(\frac{m\ell^2}{N}\right)\\
\nonumber \times &\mathop{\sum\sum}_{\substack{N<n\leq 2N\\ h\in\mathbb Z}} (n+h)^{it}W\left(\frac{h}{H}\right)\: \sum_{c\sim C} S_\psi(n+h,m;cq)e\left(\frac{2\sqrt{m(n+h)}}{cq}\right).
\end{align} 

\begin{remark}[Notation] Let us recall the following notation. Suppose $S$ is a sum we seek to estimate, and suppose we have a family of sums $\mathfrak{F}=\{T_f\}$ such that $|\mathfrak{F}|\ll t^\varepsilon$, and
$$
S\ll t^\varepsilon \sum_{\mathfrak{F}} |T_f| +t^{-A}
$$
for any $A>0$. Then we simply write $S\triangleleft T_f$.
\end{remark}

\begin{Lemma}
\label{lemma:od}
Suppose $Q$, $K$ are such that
\begin{align}
\label{1st-cond-QK}
N\ll QK^4t^{-\varepsilon}.
\end{align}  
Then (in the above notation) we have $\mathcal{O}\triangleleft \mathcal{O}(C)$.
\end{Lemma}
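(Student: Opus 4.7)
The plan is to apply Lemma~\ref{lemma:sum-bessel} to the $k$-sum in $\mathcal{O}$, extract the leading oscillation via stationary phase, and then truncate the $c$-sum dyadically. Writing $x = 4\pi\sqrt{m(n+h)}/(cq)$, I would split the sum over odd $k$ according to the residues $k\equiv 1,3\pmod 4$ and apply Lemma~\ref{lemma:sum-bessel} separately to each class, using that $i^{-(u+1)}$ is constant on $u=k-1\equiv 0,2\pmod 4$ (equal to $-i$ and $i$ respectively). Combining the two pieces, the contributions from $\sin(x\sin 2\pi v)$ cancel between the two classes, leaving the clean identity
\[
\sum_{k\text{ odd}} i^{-k}\,W\!\left(\tfrac{k-1}{K}\right)J_{k-1}(x)\;=\;K\int_{\mathbb R}\hat W(Kv)\sin(x\cos 2\pi v)\,\d v,
\]
and since $\hat W$ is Schwartz the integrand is effectively supported on $|v|\ll t^{\varepsilon}/K$.

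Within this range, the phase $\phi(v) = x\cos 2\pi v$ has a unique critical point at $v=0$ with $\phi(0)=x$ and $\phi''(0)=-4\pi^{2}x$; the stationary-phase window has width $\sim x^{-1/2}$, to be compared with the amplitude width $1/K$. When $x\gg K^{2}t^{\varepsilon}$ (equivalently $c\ll Nt^{\varepsilon}/(QK^{2})$), stationary phase applies and yields an asymptotic of shape $(2\pi x)^{-1/2}\hat W(0)\sin(x-\pi/4)$, producing the oscillation $e(\pm 2\sqrt{m(n+h)}/(cq))$ from $e^{\pm ix}$. Tracking constants, the combined weight $K/(cq)\cdot x^{-1/2}$ with $cq\asymp CQ$ and $m(n+h)\asymp N^{2}$ telescopes to the prefactor $K/\sqrt{NCQ}$, exactly matching that of $\mathcal{O}(C)$. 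A dyadic decomposition of $c$, together with the two sign choices for $e^{\pm ix}$, yields an $O(t^{\varepsilon})$-size family of sums of the form $\mathcal{O}(C)$, covering the main range.

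The main obstacle is verifying that the complementary range $c\gg Nt^{\varepsilon}/(QK^{2})$, i.e.\ $x\ll K^{2}t^{-\varepsilon}$, contributes negligibly. Here the stationary-phase formula breaks down because $x^{-1/2}\gg 1/K$, so the critical window exceeds the amplitude support; the saving instead comes from the crucial fact that $W$ is supported on $[1,2]$, so every derivative $W^{(j)}(0)$ vanishes, and consequently every moment $\int\hat W(w)w^{2j}\,\d w = W^{(2j)}(0)/(2\pi)^{2j}$ is zero. Since $\phi(v)=x-2\pi^{2}xv^{2}+O(xv^{4})$ with $xv^{2}\ll t^{-\varepsilon}$ throughout the effective support, Taylor expanding $e^{i(\phi(v)-x)}$ and integrating term by term against $\hat W(Kv)$ produces zero at every order, and a quantitative version of this argument --- essentially an application of Lemma~\ref{lemma:small-int} after isolating the small neighbourhood of $v=0$ --- yields $O(t^{-A})$ for any $A$. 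The hypothesis \eqref{1st-cond-QK} ensures the threshold $Nt^{\varepsilon}/(QK^{2})$ is consistent with the trivial-negligibility range $c\gg N/(QK)$ of $J_{k-1}(x)$, so no intermediate $c$ is lost, and one concludes $\mathcal{O}\triangleleft\mathcal{O}(C)$.
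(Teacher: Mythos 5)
Your derivation of the $k$-sum identity is correct: the $a=0$ and $a=2$ pieces of Lemma~\ref{lemma:sum-bessel}, weighted by $-i$ and $i$ respectively, do make the $\sin(x\sin 2\pi v)$ terms cancel and leave $K\int\hat W(Kv)\sin(x\cos 2\pi v)\,\d v$, and your stationary-phase extraction of $e(\pm x)K/\sqrt{x}$ with the matching prefactor $K/\sqrt{NCQ}$ is the right shape. Your route to negligibility for $x\ll K^{2}t^{-\varepsilon}$ is genuinely different from the paper's: the paper writes $\hat W(v)=\int W(u)e(uv)\,\d u$, which places a large linear term $uv$ (with $u\asymp K$) in front of the $v$-phase so that $|h'(v)|\gg K$ and Lemma~\ref{lemma:small-int} applies directly; you instead invoke the vanishing of all moments $\int\hat W(w)w^{m}\,\d w=c_mW^{(m)}(0)=0$ (since $W$ vanishes to infinite order at $0$) together with the slow variation of $e^{ix\cos 2\pi v}$ on the effective support. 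This Fourier-support/moment argument is sound and can be made quantitative, though one does need to control the Taylor remainder against the tails of $\hat W$; the paper's device avoids that bookkeeping by arranging a phase with no stationary point.

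The genuine gap is your account of where the hypothesis \eqref{1st-cond-QK} enters. You claim it ensures the threshold $Nt^\varepsilon/QK^2$ is "consistent with the trivial-negligibility range $c\gg N/(QK)$," but that consistency is automatic whenever $K\gg t^{\varepsilon}$ and has nothing to do with $N\ll QK^{4}t^{-\varepsilon}$. The actual role of \eqref{1st-cond-QK} in the paper is to justify truncating the Taylor expansion of $\cos(2\pi v/K)$ after the quadratic term: with $|v|\ll t^{\varepsilon}$ on the effective support of $\hat W$, the quartic term is $\ll xv^{4}/K^{4}\ll (N/Q)t^{4\varepsilon}/K^{4}$, and \eqref{1st-cond-QK} is exactly what makes this $\ll t^{-\varepsilon}$, reducing the phase to $\pm x\mp 2\pi^{2}xv^{2}/K^{2}$ so that the subsequent stationary-phase evaluation and the residual $u$-oscillation $e(u^{2}K^{2}/16\pi^{2}x)$ are controllably flat. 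Your proposal either needs to invoke \eqref{1st-cond-QK} for this purpose, or, if you are doing a full asymptotic stationary-phase expansion directly on the untruncated cosine phase, you must carry the expansion far enough and justify that all correction terms are $O(t^{-A})$ once $x\gg K^{2}t^{\varepsilon}$; as written, the hypothesis is cited but its function is misattributed.
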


\begin{proof}
Consider the sum over $k$ in \eqref{O}
\begin{align}
\label{k-sum-in}
	\sum_{\substack{k=1\\k\:\text{odd}}}^\infty W\left(\frac{k-1}{K}\right)i^{-k} J_{k-1}\left(\frac{4\pi\sqrt{m(n+h)}}{cq}\right).
\end{align}
Applying Lemma~\ref{lemma:sum-bessel} we get that this sum is given by
\begin{align*}
	\int_{\mathbb R}\hat{W}(v)\sin\left(2\pi x\cos \frac{2\pi v}{K}\right)\mathrm{d}v,
\end{align*}
with $x=2\sqrt{m(n+h)}/cq$. This integral can be expressed as a linear combination of the integrals
\begin{align*}
	\int_{\mathbb R}\hat{W}(v)e\left(\pm x\cos \frac{2\pi v}{K}\right)\mathrm{d}v.
\end{align*}
Since the Fourier transform $\hat{W}$ decays rapidly, we can and will introduce a smooth function $F$ with support $[-V,V]$ with $F^{(j)}\ll_j V^{-j}$, $F(v)=1$ for $|v|\ll t^\varepsilon$ and any $V$ in the range $t^\varepsilon\ll V\ll Kt^{-\varepsilon}$, and replace the above integrals (up to negligible error terms) by
\begin{align*}
	\iint_{\mathbb R^2}W(u)F(v)e\left(uv\pm x\cos \frac{2\pi v}{K}\right)\mathrm{d}u\mathrm{d}v.
\end{align*}
Now we apply Lemma~\ref{lemma:small-int} to the $v$-integral, and it follows that the integral is negligibly small, i.e. $O_J(t^{-J})$ for any $J\geq 1$, if $x\ll K^{2-\varepsilon}$. This analysis holds even if the weight function $W$ has a little oscillation, say $W^{(j)}\ll_j t^{j\varepsilon}$. In the complementary range for $x$ we expand the cosine function into a Taylor series. Since $x\ll N/Q$, if we assume \eqref{1st-cond-QK} then we only need to retain the first two terms in the expansion, and the above integral essentially reduces to
\begin{align*}
	e(\pm x)\iint_{\mathbb R^2}W(u)F(v)e\left(uv\mp \frac{4\pi^2 xv^2}{K^2}\right)\mathrm{d}u\mathrm{d}v.
\end{align*}
To the integral over $v$ we apply the stationary phase analysis. It turns out to be negligibly small (due to \eqref{lem-neg}) when we have $+$ sign inside the exponential, otherwise the integral essentially reduces to 
$$
e\left(x+\frac{u^2K^2}{16\pi^2x}\right)\frac{K}{\sqrt{x}}\leadsto e(x)\frac{K}{\sqrt{x}}
$$
with $x\gg K^{2-\varepsilon}$ (upto an oscillatory factor which `oscillates at most like $t^\varepsilon$'). In any case, it follows that we can cut the sum over $c$ in \eqref{O} at $C\ll Nt^\varepsilon/QK^2$, at a cost of a negligible error term.  The lemma follows.
\end{proof}

\begin{remark}
To get the Weyl bound it is sufficient to take $QK^2\gg Nt^\varepsilon$, so that the off-diagonal is trivially small (see \cite{AKMS}). But to achieve sub-Weyl one needs to take $QK^2$ smaller. 
\end{remark}

\begin{Lemma}
\label{lemma:oc-after-poisson}
Suppose $H=N/t^{1/3}$. We have
$$
\mathcal{O}(C)\ll \sum_\pm |\mathcal{O}_\pm(C)^\dagger|,
$$
where
\begin{align}
\label{O-after-poisson}
&\mathcal{O}_\pm(C)^\dagger:=\frac{KH\sqrt{Q}}{\sqrt{NC}t^{1/6}}\sum_{q\in\mathcal{Q}}\:\mathop{\sum\sum}_{m,\ell=1}^\infty \lambda_F(m)U\left(\frac{m\ell^2}{N}\right)\\
\nonumber &\times \sum_{c\sim C}\mathop{\sum\sum}_{\substack{N<n\leq 2N\\ h\equiv \pm \bar{\ell}\bmod{q}}} \:n^{it}e\left(\frac{2\sqrt{mn}}{cq}-\frac{hn}{cq}-\frac{\bar{h}m}{cq}+\frac{A^2}{B}\right)\;W\left(\frac{2nA}{HB}\right),
\end{align}
with
\begin{align*}
A=\frac{t}{2\pi}+\frac{\sqrt{mn}}{cq}-\frac{hn}{cq},\;\;\;\text{and}\;\;\;B=\frac{t}{\pi}+ \frac{\sqrt{mn}}{cq}.
\end{align*}
\end{Lemma}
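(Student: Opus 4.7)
The plan is to transform $\mathcal{O}(C)$ by three successive manoeuvres: orthogonality over the odd Dirichlet characters $\psi \bmod q$ to deal with the spectral average, Poisson summation in the integer variable $h$ to swap to a dual side, and stationary phase on the resulting smooth oscillatory integral. I start with orthogonality: writing
\begin{equation*}
S_\psi(n+h,m;cq) = \sum_{(x,cq)=1} \psi(x)\, e\bigl(((n+h)x + m\bar x)/(cq)\bigr)
\end{equation*}
and using
\begin{equation*}
\sum_{\psi(-1)=-1}\psi(\ell x) = \tfrac{\phi(q)}{2}\bigl(\mathbf{1}_{x\equiv \bar\ell\bmod q} - \mathbf{1}_{x\equiv -\bar\ell\bmod q}\bigr),
\end{equation*}
the $\psi$-sum collapses into a factor $\phi(q)/2 \asymp Q$ times two signed exponential sums restricted by $x \equiv \pm\bar\ell \bmod q$; these become the summands $\mathcal{O}_\pm(C)^\dagger$ after the triangle inequality.

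The remaining $h$-sum now reads $\sum_{h\in \mathbb{Z}} W(h/H)(n+h)^{it}\,e(2\sqrt{m(n+h)}/(cq))\,e(hx/(cq))$, and Poisson summation in $h\in\mathbb{Z}$ converts it to $\sum_{\tilde h} \widehat G(\tilde h - x/(cq))$ with $G(h) = W(h/H)(n+h)^{it}e(2\sqrt{m(n+h)}/(cq))$. The reparametrisation $h_\star := cq\tilde h - x$ then simultaneously (i) turns the argument of $\widehat G$ into $h_\star/(cq)$; (ii) since $x\equiv -h_\star\bmod cq$, collapses the leftover phase $e((nx+m\bar x)/(cq))$ to $e(-h_\star n/(cq) - \bar h_\star m/(cq))$, which is precisely the new Kloosterman-like structure seen in the target; and (iii) transfers the congruence $x\equiv \pm\bar\ell\bmod q$ (after a sign absorbed into the outer $\pm$) to $h_\star \equiv \pm\bar\ell \bmod q$. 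The coprimality $(h_\star,cq)=1$ needed to define $\bar h_\star$ is inherited from $(x,cq)=1$.

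Finally, applying stationary phase to $\widehat G(h_\star/(cq)) = \int W(h/H)\,e(\phi(h))\,\mathrm d h$ with
\begin{equation*}
\phi(h) = \tfrac{t}{2\pi}\log(n+h) + \tfrac{2\sqrt{m(n+h)}}{cq} - \tfrac{hh_\star}{cq}
\end{equation*}
closes the argument. A direct computation gives the key algebraic identities $n\phi'(0) = A$ and $2n^2\phi''(0) = -B$ with $A, B$ exactly as in the statement, so the unique stationary point is $h_0 = -\phi'(0)/\phi''(0) = 2nA/B$, and Taylor expansion of $\phi$ about $h=0$ returns $\phi(h_0) = (t/(2\pi))\log n + 2\sqrt{mn}/(cq) + A^2/B$ modulo cubic corrections of size $A^3 t/B^3$; these are $O(1)$ under the support constraint $A \lesssim HB/n$ together with $H = N/t^{1/3}$, and can be incorporated into an effective amplitude with mild $t^\varepsilon$-oscillation. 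The standard first-order stationary phase then yields $W(2nA/(HB))\, n\sqrt{2\pi/B}\, e(\phi(h_0))$, while Lemma~\ref{lemma:small-int} confirms that $\tilde h$ for which no stationary point falls in $[H,2H]$ contribute negligibly. Collecting, the original prefactor $K/\sqrt{NCQ}$ gains $\phi(q)/2 \asymp Q$ from orthogonality and $n\sqrt{2\pi/B} \asymp N/\sqrt t$ from stationary phase, giving $K\sqrt{NQ}/\sqrt{Ct} = KH\sqrt Q/(\sqrt{NC}\, t^{1/6})$ via $H = N/t^{1/3}$; renaming $h_\star \to h$ and triangle-summing over $\pm$ produces the claim. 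The principal obstacle is this stationary phase accounting: verifying the identifications that force the quadratic term of the expansion to equal $A^2/B$, and checking that the cubic remainder is genuinely $O(1)$ so that the resulting smooth amplitude $W(2nA/(HB))$ retains the shape of the original weight up to harmless perturbation.
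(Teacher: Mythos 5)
Your proof is correct and follows essentially the same path as the paper's: Poisson summation in $h$ with modulus $cq$, the orthogonality relation for odd characters $\psi\bmod q$ (done before Poisson in your write-up, after Poisson in the paper's, but the two commute), and a stationary phase expansion in which the choice $H=N/t^{1/3}$ renders the cubic and higher terms flat. The algebraic checks ($n\phi'(0)=A$, $2n^2\phi''(0)=-B$, phase value $A^2/B$ at $h_0=2nA/B$, and the bookkeeping $K/\sqrt{NCQ}\cdot Q\cdot Ht^{-1/6}=KH\sqrt{Q}/(\sqrt{NC}\,t^{1/6})$) all match the paper.
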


\begin{proof}
We apply the Poisson summation formula on the sum over the shifts $h$ in $\mathcal{O}(C)$. This transforms the sum
\begin{align}
\label{h-sum-direct-od}
\mathop{\sum}_{\substack{h\in\mathbb Z}} S_\psi(n+h,m;cq)e\left(\frac{2\sqrt{m(n+h)}}{cq}\right)(n+h)^{it}W\left(\frac{h}{H}\right)
\end{align} 
into
\begin{align}
H\sum_{\substack{h\in\mathbb{Z}\\(h,cq)=1}} \psi(-h)\:e\left(-\frac{hn}{cq}-\frac{\bar{h}m}{cq}\right) \mathfrak{I}
\end{align}
where the integral is given by
\begin{align*}
\mathfrak{I}=\int W(y) e(f(y))\mathrm{d}y,
\end{align*}
with
\begin{align*}
f(y)= \frac{t}{2\pi}\log (n+Hy)+\frac{2\sqrt{m(n+Hy)}}{cq}-\frac{Hhy}{cq}.
\end{align*}
We have the Taylor expansion
\begin{align}
\label{f-dual-initial}
f(y)= &\left[\frac{t}{2\pi}\log n+\frac{2\sqrt{mn}}{cq}\right]+\frac{Hy}{n}\left[\frac{t}{2\pi}+\frac{\sqrt{mn}}{cq}-\frac{hn}{cq}\right]\\
\nonumber &-\frac{H^2y^2}{n^2}\left[\frac{t}{4\pi}+ \frac{\sqrt{mn}}{4cq}\right]+O\left(\frac{t}{(N/H)^3}\right).
\end{align}
To restrict the phase function up to the quadratic term we pick $H=N/t^{1/3}$. (The tail term $E(\dots)$ is a `flat function' in the sense that $y^jE^{(j)}(y)\ll 1$ with respect to all the variables.)  Then by the stationary phase analysis we can replace $\mathcal{O}(C)$ by
\begin{align*}
&\frac{KH}{\sqrt{NCQ}t^{1/6}}\sum_{q\in\mathcal{Q}}\;\sum_{\substack{\psi\bmod{q}\\\psi(-1)=- 1}}\:\mathop{\sum\sum}_{m,\ell=1}^\infty \lambda_F(m)\psi(\ell)U\left(\frac{m\ell^2}{N}\right)\\
\nonumber &\times \sum_{c\sim C}\:\mathop{\sum\sum}_{\substack{N<n\leq 2N\\ h\in\mathbb Z}} \psi(h)\:n^{it}e\left(\frac{2\sqrt{mn}}{cq}-\frac{hn}{cq}-\frac{\bar{h}m}{cq}+\frac{A^2}{B}\right)\;W\left(\frac{2nA}{HB}\right).
\end{align*} 
Finally executing the sum over $\psi$ we arrive at the expression given in \eqref{O-after-poisson}. The lemma follows.
\end{proof}

\begin{remark}
It should be possible to improve our result by picking $H=N/t^{1/4}$ and allowing up to the cubic term. The argument given below might go through with some modifications. But the expressions would be much more complicated.
\end{remark}

\begin{remark}
Notice that the weight function in \eqref{O-after-poisson} puts the restriction that $A\asymp t^{2/3}$, and consequently 
\begin{align}
\label{n-rest-direct-od}
\left|n-\frac{tcq}{2\pi h}\right|\ll N\max \left\{\frac{1}{t^{1/3}},\frac{N}{C\ell Qt}\right\}=:N\Delta_0.
\end{align}
and $h\sim tCQ/N$. 
\end{remark}

Below we continue our analysis with $\mathcal{O}_+(C)^\dagger$ which we denote by $\mathcal{O}(C)^\dagger$. The analysis of the other term $\mathcal{O}_-(C)^\dagger$ is exactly same.\\

\begin{Lemma}
We have
\begin{align}
\label{direct-od-cauchy}
\mathcal{O}(C)^\dagger\ll \sum_\ell\;\frac{KH\sqrt{Q}}{\sqrt{C}t^{1/6}\ell}\;\Omega_\ell^{1/2}
\end{align}
where $\Omega_\ell$ is given by
\begin{align}
\label{off-diag-omega}
\mathop{\sum}_{m\sim N/\ell^2}\:\Bigl| \sum_{q\in\mathcal{Q}} \sum_{c\sim C}\mathop{\sum\sum}_{\substack{N<n\leq 2N\\ h\equiv \bar{\ell}\bmod{q}}} \:n^{it}e\left(\frac{2\sqrt{mn}}{cq}-\frac{hn}{cq}-\frac{\bar{h}m}{cq}+\frac{A^2}{B}\right)\;W\left(\frac{2nA}{HB}\right)\Bigr|^2.
\end{align} 
\end{Lemma}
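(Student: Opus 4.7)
The plan is a standard Cauchy--Schwarz manoeuvre designed to eliminate the Hecke coefficients $\lambda_F(m)$, so that the remaining oscillatory sum can be attacked (in subsequent sections) by opening the square. The key observation is that in \eqref{O-after-poisson}, the variable $m$ is the only variable whose coefficient carries arithmetic information we cannot easily exploit; all other summation variables $(q,c,n,h)$ appear purely through the phase and the smooth weight. Hence we should keep $m$ as the ``outer'' Cauchy--Schwarz variable and collect $(q,c,n,h)$ inside the square.

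Concretely, I would proceed as follows. First, pull the sum over $\ell$ (which is constrained to $\ell \ll \sqrt{N}$ by the support of $U$) outside by the triangle inequality, writing
\begin{align*}
\mathcal{O}(C)^\dagger \;\leq\; \frac{KH\sqrt{Q}}{\sqrt{NC}\,t^{1/6}}\sum_{\ell \ll \sqrt{N}}\;\Bigl|\sum_{m} \lambda_F(m)\,U\!\left(\tfrac{m\ell^2}{N}\right) T_\ell(m)\Bigr|,
\end{align*}
where $T_\ell(m)$ stands for the quadruple sum over $q,c,n,h$ appearing in \eqref{O-after-poisson}. Next, apply Cauchy--Schwarz to the $m$-sum:
\begin{align*}
\Bigl|\sum_{m} \lambda_F(m)\,U\!\left(\tfrac{m\ell^2}{N}\right) T_\ell(m)\Bigr|^2 \;\leq\; \Bigl(\sum_{m \sim N/\ell^2} |\lambda_F(m)|^2\Bigr)\Bigl(\sum_{m \sim N/\ell^2} |T_\ell(m)|^2\Bigr).
\end{align*}
The first factor is controlled by the Rankin--Selberg bound (or equivalently Deligne plus Hecke relations on average), giving $\sum_{m \sim N/\ell^2}|\lambda_F(m)|^2 \ll (N/\ell^2)\,t^\varepsilon$. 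The second factor, after discarding the harmless bounded weight $U$ via positivity, is precisely $\Omega_\ell$ as defined in \eqref{off-diag-omega}.

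Collecting the factors, we obtain
\begin{align*}
\mathcal{O}(C)^\dagger \;\ll\; t^\varepsilon \sum_\ell \frac{KH\sqrt{Q}}{\sqrt{NC}\,t^{1/6}}\cdot \frac{\sqrt{N}}{\ell}\cdot \Omega_\ell^{1/2} \;=\; t^\varepsilon \sum_\ell \frac{KH\sqrt{Q}}{\sqrt{C}\,t^{1/6}\,\ell}\;\Omega_\ell^{1/2},
\end{align*}
which matches \eqref{direct-od-cauchy}. There is no real obstacle here: the lemma is a bookkeeping step whose only genuine ingredient is the Rankin--Selberg estimate for the second moment of $\lambda_F$. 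The actual work of the off-diagonal analysis is deferred to the estimation of $\Omega_\ell$, where one opens the square in $m$, executes Poisson summation in $m$, and analyses the resulting character/exponential sums; that is where the delicate cancellation required for the sub-Weyl bound will have to be extracted.
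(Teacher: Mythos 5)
Your proof is correct and follows exactly the same strategy as the paper's, which merely cites ``the Cauchy inequality and the Ramanujan bound on average'' without spelling out the details. You have fleshed out the bookkeeping correctly: pulling $\ell$ out by the triangle inequality, reordering so that $q,c,n,h$ sit inside, applying Cauchy--Schwarz in $m$, invoking the Rankin--Selberg second-moment bound $\sum_{m\sim N/\ell^2}|\lambda_F(m)|^2\ll (N/\ell^2)t^\varepsilon$, and checking that the factor $\sqrt{N/\ell^2}$ cancels the $\sqrt{N}$ in the denominator to yield $\frac{KH\sqrt{Q}}{\sqrt{C}\,t^{1/6}\,\ell}$.
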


\begin{proof}
The lemma follows by applying the Cauchy inequality and the Ramanujan bound on average for the Fourier coefficients.
\end{proof}

Next we open the absolute value square (after smoothing the outer sum) leading to an expression of the form
$$
\mathop{\sum}_{m\in\mathbb{Z}}\:W(m\ell^2/N)\:\mathop{\sum\sum}_{q_1,q_2\in\mathcal{Q}} \mathop{\sum\sum}_{c_1,c_2\sim C}\mathop{\sum\sum}_{\substack{N<n_1\leq 2N\\ h_1\equiv \bar{\ell}\bmod{q_1}}}\mathop{\sum\sum}_{\substack{N<n_2\leq 2N\\ h_2\equiv \bar{\ell}\bmod{q_2}}}\;\{\dots\}.
$$ 
Our argument now depends on the size of $c_1q_1h_1-c_2q_2h_2$. In the case of small gap, i.e.
\begin{align}
\label{gap-small}
c_1q_1h_1-c_2q_2h_2\ll \Delta_0\frac{t(CQ)^2}{N}=:M/\ell,
\end{align}
we estimate the sum trivially. The problem reduces to a non-trivial counting problem. However as we are not trying to get the best possible exponent here, we will settle with a rather easy bound. For the case of large gap, i.e. in the range complementary to \eqref{gap-small}, we apply the Poisson summation formula on the sum over $m$. \\

\begin{Lemma}
\label{lemma:od-small}
Suppose $Q, K\ll t^{1/3}$. The total contribution to \eqref{direct-od-cauchy} of the terms satisfying \eqref{gap-small} is bounded by
$$
\sqrt{N}KHQ^2t^{1/3}\;\frac{N^{3/2}t^{1/6}}{Q^{5/2}K^3}.
$$
\end{Lemma}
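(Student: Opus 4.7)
The plan is to bound $\Omega_\ell$ by a trivial estimate combined with a divisor-bound count. Inserting absolute values in the sum defining $\Omega_\ell$ and using $|n^{it}|=|e(\cdot)|=1$ and $W\ll 1$, we have $\Omega_\ell\leq\mathcal{N}_{\mathrm{sm}}$, where $\mathcal{N}_{\mathrm{sm}}$ counts the $9$-tuples $(m,q_1,q_2,c_1,c_2,h_1,h_2,n_1,n_2)$ satisfying the support conditions in \eqref{off-diag-omega} together with the small-gap restriction \eqref{gap-small}.

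The heart of the argument is the count of sextuples $(c_1,q_1,h_1,c_2,q_2,h_2)$ subject to $|c_1q_1h_1-c_2q_2h_2|\leq M/\ell$ and the size and residue constraints. Fixing $(c_1,q_1,h_1)$ gives $O(tC^2Q/N)$ options, since $h_1$ lies in a single residue class modulo $q_1$ inside an interval of length $\sim tCQ/N$; then $y=c_2q_2h_2$ is confined to an interval of length $O(M/\ell)$. For each such $y$, the divisor bound gives $O(t^\varepsilon)$ factorizations $y=c_2q_2h_2$ with $c_2\sim C$, $q_2\in\mathcal{Q}$, $h_2\sim tCQ/N$; the residue restriction $h_2\equiv\bar{\ell}\bmod{q_2}$ selects only a $1/Q$-fraction of these on average, yielding $O(t^\varepsilon M/(\ell Q))$ admissible $(c_2,q_2,h_2)$ per $(c_1,q_1,h_1)$. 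Finally, \eqref{n-rest-direct-od} confines each $n_i$ to an interval of length $O(N\Delta_0)$ and $m$ ranges over $O(N/\ell^2)$ integers, so multiplying yields
\begin{align*}
\mathcal{N}_{\mathrm{sm}}\ll t^\varepsilon\cdot\frac{N}{\ell^2}\cdot\frac{tC^2Q}{N}\cdot\frac{M}{\ell Q}\cdot(N\Delta_0)^2=t^\varepsilon\,\frac{t^2C^4Q^2N\Delta_0^3}{\ell^2},
\end{align*}
after substituting $M/\ell=\Delta_0 t(CQ)^2/N$.

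To conclude, I would insert $\Omega_\ell^{1/2}\ll t^\varepsilon tC^2Q\Delta_0^{3/2}N^{1/2}/\ell$ into \eqref{direct-od-cauchy}, use $H=N/t^{1/3}$ and $C\ll Nt^\varepsilon/(QK^2)$ from Lemma~\ref{lemma:od}, and sum over the $O(\log t)$ dyadic values of $\ell\leq\sqrt{N}$. Checking the two regimes $\Delta_0=t^{-1/3}$ and $\Delta_0=N/(C\ell Qt)$ that arise from \eqref{n-rest-direct-od}, the hypothesis $Q,K\ll t^{1/3}$ provides exactly the exponents required to reach the claimed bound $\sqrt{N}KHQ^2t^{1/3}\cdot N^{3/2}t^{1/6}/(Q^{5/2}K^3)$. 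The main obstacle is securing the key gain of $1/Q$ from the residue condition $h_2\equiv\bar{\ell}\bmod{q_2}$ on top of the divisor bound; without this saving one would lose a factor of $Q$ and the trivial counting would fall short of the target.
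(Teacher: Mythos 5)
Your proposal takes a genuinely different route from the paper's counting argument, and unfortunately the key step --- the claim that the residue condition $h_2\equiv\bar{\ell}\bmod{q_2}$ ``selects a $1/Q$-fraction on average'' of the factorizations $y=c_2q_2h_2$ with $y$ in an interval of length $M/\ell$ --- is not justified, and if one works it out carefully the missing contribution spoils the bound. Fixing $c_2\sim C$ and $q_2\sim Q$ and demanding $h_2=y/(c_2q_2)$ lie in a residue class modulo $q_2$ inside an interval of length $M/(\ell CQ)$ gives $O(1+M/(\ell CQ^2))$ choices of $h_2$; summing over $c_2, q_2$ yields $O(CQ+M/(\ell Q))$, not $O(M/(\ell Q))$. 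The extra term $CQ$ is a boundary effect: the ``$1/Q$-on-average'' heuristic requires the $y$-interval to be long compared with $q_2^2\sim Q^2$, which fails whenever $C\ll\sqrt{N/(\Delta_0 t)}$, and such small $C$ do occur.

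If you restore the missing $CQ$ term, the count of $(q_i,c_i,h_i)$ becomes $\asymp\frac{tC^3Q^2}{N}+\frac{\Delta_0 t^2C^4Q^2}{N^2}$, whereas the paper's parameterization (writing $h_i\ell=1+g_iq_i$, setting $a_i=c_ig_i$, fixing $(a_1,a_2)$ and counting $(q_1,q_2)$ via $|q_1-\alpha q_2|\ll MN/(tC^2Q)$ together with a congruence only modulo $\ell$) yields $\asymp\frac{t^2C^4Q}{N^2}+\frac{\Delta_0 t^2C^4Q^2}{N^2}$. The second terms agree, but the first (``diagonal'') terms differ by a factor $NQ/(tC)\gg Q^2K^2/t$, i.e.\ the corrected version of your count is strictly larger whenever $QK\gg t^{1/2}$. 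Carried through \eqref{ineq-omega}, this replaces the paper's dominant term $\sqrt{N}KHQ^2t^{1/3}\cdot\frac{N^{3/2}t^{1/6}}{Q^{5/2}K^3}$ by $\sqrt{N}KHQ^2t^{1/3}\cdot\frac{N^{3/2}}{Q^{3/2}K^2t^{1/3}}$, a loss of $QK/t^{1/2}$. With $Q,K$ as in \eqref{pick-q}, \eqref{pick-k} this factor is $\asymp N^{3/5}t^{4\delta/5-1/2}$, which exceeds $1$ for $N\gg t^{5/6}$ --- precisely in the range one must cover. A second red flag: taken literally, your claimed count $M/(\ell Q)$ (without the $CQ$ term) produces $\sqrt{N}KHQ^2t^{1/3}\cdot\frac{N^{3/2}}{Q^2K^3}$, which is smaller than the lemma's stated bound by the factor $t^{1/6}/Q^{1/2}\gg 1$; that the heuristic appears to prove a strictly better bound than the lemma is itself a signal that a term was dropped. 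The essential idea that the paper uses and you are missing is the change of variables $a_i=c_ig_i$ turning the constraint into $a_1q_1^2-a_2q_2^2\ll M$, so that the congruence on $q_1$ is only modulo the small quantity $\ell$ rather than modulo $q_2\sim Q$, shrinking the diagonal term from $CQ$ to $tC^2/N$.
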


\begin{proof}
The number of $(n_1,n_2)$ pairs is given by $1+(\Delta_0 N)^2\asymp (\Delta_0 N)^2$ (as $N\gg t^{1/3}$) once the other variables are given. Now we need to count the number of $(q_i,c_i,h_i)$. Setting $h_i\ell=1+g_iq_i$ we get that 
\begin{align}
\label{inequality-to-count}
c_1g_1q_1^2-c_2g_2q_2^2\ll M.
\end{align}
We set $a_i=c_ig_i\sim tC^2/N$. Fix $a_1$, $a_2$, and set $\alpha=\sqrt{a_2/a_1}$. We seek to count the number of $q_i$ such that $|q_1-\alpha q_2|\ll MN/tC^2Q$ and $q_1g_1\equiv -1\bmod{\ell}$.  Given $q_2$ there are at most $O(1+MN/tC^2Q\ell)$ many $q_1$'s, and so 
$$
\#\{(q_1,q_2,c_1,c_2,h_1,h_2)\}\ll \left(\frac{tC^2}{N}\right)^2 \left(1+\frac{MN}{tC^2Q\ell}\right)Q.
$$
It follows that the contribution of these terms to \eqref{direct-od-cauchy} is bounded by
$$
\sum_\ell\;\frac{KH\sqrt{Q}}{\sqrt{C}t^{1/6}\ell}\:\Delta_0 N \frac{tC^2}{N} \left(1+\sqrt{\frac{MN}{tC^2Q\ell}}\right)\sqrt{Q}.
$$
Next using the fact that $\Delta_0 C\ell\ll N/QK^2 t^{1/3}$ (as $K\ll t^{1/3}$) we get that the expression is dominated by
$$
\sqrt{N}KHQ^2t^{1/3}\sum_\ell \frac{(1+\sqrt{\Delta_0 Q})\Delta_0 t^{1/2}C^{3/2}}{Q\ell}\ll \sqrt{N}KHQ^2t^{1/3}\left(\frac{t^{1/6}N^{3/2}}{Q^{5/2}K^3}+\frac{N^{3/2}}{Q^2K^3}\right).
$$
The lemma follows as the first term dominates the other since $Q\ll t^{1/3}$.
\end{proof}

\begin{remark}
The bound in Lemma~\ref{lemma:od-small} is satisfactory if
\begin{align}
\label{range-N-1}
N\ll Q^{5/3}K^{2}t^{-1/9-2\delta/3}.
\end{align}
This prompts us to pick 
\begin{align}
\label{pick-q}
Q=N^{3/5}t^{1/15+2\delta/5}K^{-6/5}.
\end{align}
This is valid if we have $1\leq Q\ll t^{1/3}$. The lower bound holds if $N\gg K^2t^{-1/9-2\delta/3}$ which is fine as later (see \eqref{pick-k}) we will choose $K=\min\{t^{1/3-2\delta},Nt^{-1/3-2\delta}\}$. For the upper bound we require to take $\delta<1/42$.
\end{remark}

\begin{remark}
It should be possible to improve the above lemma. Indeed, in generic case $MN/tC^2Q$ is much smaller than one. So $q_1$ exists only if  $q_2$ is such that $\|\alpha q_2\|\ll MN/tC^2Q$. So, roughly speaking, the count for the number of solutions of \eqref{inequality-to-count} is given by
\begin{align*}
\mathop{\sum\sum}_{a_1,a_2}\;\sum_{q_2}\;\mathbf{1}_{\|\alpha q_2\|\ll MN/tC^2Q}.
\end{align*}
In the above lemma we have estimated this sum trivially. But one may use exponential sum to detect the condition, and try to get cancellation in that sum on average.
\end{remark}

\begin{Lemma}
\label{lemma:od-big}
The contribution to \eqref{direct-od-cauchy} of the terms not satisfying \eqref{gap-small} is bounded by
$$
\sqrt{N}KHQ^2t^{1/3}\:\frac{Nt^{1/6}}{K^{5/2}Q^{3/2}}.
$$
\end{Lemma}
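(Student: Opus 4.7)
My plan is to exploit the large-gap hypothesis via Poisson summation in the variable $m$ after opening the absolute square in \eqref{off-diag-omega}. First I attach a smooth weight $W(m\ell^2/N)$ to the $m$-sum at admissible cost and expand $|\cdot|^2$, obtaining a sum over tuples $(q_1,q_2,c_1,c_2,n_1,n_2,h_1,h_2)$ of the inner sum
\begin{align*}
\sum_{m\in\mathbb{Z}}W\!\left(\tfrac{m\ell^2}{N}\right) e\!\left(\tfrac{2\sqrt{mn_1}}{c_1q_1}-\tfrac{2\sqrt{mn_2}}{c_2q_2}\right) e\!\left(\tfrac{\bar{h}_2\,m}{c_2q_2}-\tfrac{\bar{h}_1\,m}{c_1q_1}\right) e(\Psi(m)),
\end{align*}
where $e(\Psi(m))$ packages the two $A^2/B$ factors inherited from \eqref{O-after-poisson}. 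Poisson summation modulo $c_1c_2q_1q_2$ converts the linear-in-$m$ exponential into a Ramanujan indicator that pins the dual frequency $m^*$ to a single residue class, namely $m^*\equiv \bar{h}_1 c_2q_2-\bar{h}_2 c_1q_1\pmod{c_1c_2q_1q_2}$. What remains is a one-dimensional integral over $x\sim N/\ell^2$ with phase
\[
\phi(x)=\tfrac{2\sqrt{xn_1}}{c_1q_1}-\tfrac{2\sqrt{xn_2}}{c_2q_2}+\Psi(x)-\tfrac{m^*x}{c_1c_2q_1q_2}.
\]

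Next I would apply the stationary phase method to this integral. A direct computation gives $\phi''(x)\asymp \sqrt{N}/(\ell CQ\,x^{3/2})$, so the integral is non-negligible only for $m^*$ in a window of length $\ll \ell CQ\,t^\varepsilon$, which together with the congruence restriction yields at most $O(1+\ell/(CQ))$ admissible $m^*$ per tuple. The stationary phase formula then delivers a contribution of size $\sqrt{\ell CQ/N}$ per tuple after localising $x$ at the stationary point; the ``trivial'' frequency $m^*=0$ is eliminated by Lemma~\ref{lemma:small-int} since the dominant square-root derivative is non-zero in the generic regime.

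It then remains to count tuples and dual frequencies. The localisation \eqref{n-rest-direct-od} restricts each $n_i$ to a window of size $\Delta_0 N$ once $(c_i,q_i,h_i)$ is fixed, the large-gap hypothesis $|c_1q_1h_1-c_2q_2h_2|\gg M/\ell$ forces the stationary phase condition $\phi'(x_0)=m^*/(c_1c_2q_1q_2)$ to cut out a genuinely restrictive relation between $n_1$ and $n_2$ rather than a near-coincidence, and the external variables range over $c_i\sim C$, $q_i\sim Q$, $h_i\sim tCQ/N$. Combining the inner estimate $\sqrt{\ell CQ/N}$ with the resulting tuple count and substituting into \eqref{direct-od-cauchy} should yield the stated bound. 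The main obstacle I anticipate is the stationary-phase bookkeeping: the phase $\phi$ mixes two square-root contributions with the quadratic residual $\Psi$ inherited from Lemma~\ref{lemma:oc-after-poisson}, and one has to verify uniformly that $\phi''$ retains its expected size and that the large-gap condition -- which in Lemma~\ref{lemma:od-small} entered only through elementary lattice-point counting -- correctly propagates through the stationary-phase relation to control the number of admissible tuples.
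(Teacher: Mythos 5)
Your high-level strategy — smooth the $m$-sum, open the square, Poisson in $m$, stationary phase in the resulting oscillatory integral, then count tuples — is indeed the paper's strategy. However, there are two substantive gaps, and as written the argument does not reach the stated bound.

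First, the size of the second derivative. You assert $\phi''(x)\asymp \sqrt{N}/(\ell CQ\,x^{3/2})$, which is the size of a \emph{single} square-root term. But after Cauchy the phase is a \emph{difference} of two nearly equal square-root pieces with parameters $(n_1,c_1q_1)$ and $(n_2,c_2q_2)$, together with the residual $\Psi$, so these contributions nearly cancel. The paper's $A_2^\star$ is accordingly a small difference plus the Poisson frequency term $-\tfrac{N}{\ell^2}\tfrac{m}{c_1c_2q_1q_2}$, and the relevant second-derivative bound is $\mathfrak{H}\ll C\ell Q/\sqrt{N|m|}$, \emph{depending on $m$}. Your $m$-independent bound $\sqrt{\ell CQ/N}$ only matches at the extreme $|m|\asymp CQ\ell$ and overcounts for smaller $|m|$; it also obscures the fact that the range of non-negligible $m$ is governed by the size of $\sqrt{n_1}/c_1q_1-\sqrt{n_2}/c_2q_2$ (this is precisely the paper's $M_s$), not by the crude window $\ll\ell CQ$.

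Second, and more seriously, the tuple count. You let $(c_i,q_i,h_i)$ range freely and use the congruence $m\equiv\overline{h_1}c_2q_2-\overline{h_2}c_1q_1\pmod{c_1c_2q_1q_2}$ only to bound the number of admissible $m$ per tuple. That gives $\sim C^2Q^2(tCQ/N)^2$ tuples, which is off by a factor of order $CQ^2/\ell$ from the target. The paper exploits the congruence in the opposite direction: reducing it modulo $c_1q_1$ shows $c_2q_2\equiv mh_1\pmod{c_1q_1}$, so given $(m,c_1,q_1,h_1)$ the product $c_2q_2$ is forced in a single residue class of modulus $\asymp CQ$, giving $O(t^\varepsilon)$ choices of $(c_2,q_2)$; similarly $h_2$ is then pinned to $O(t/N)$ values. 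Combined with the side condition $h_i\equiv\bar\ell\pmod{q_i}$ (which cuts $h_1$ from $tCQ/N$ down to $tC/N$), this yields the manageable count $t^\varepsilon M_sQ(tC/N)^2$. You anticipate that the large-gap condition ``cuts out a restrictive relation between $n_1,n_2$,'' but this is not what carries the day: the large-gap hypothesis is used to guarantee $A_1^\star$ dominates the error (hence $m\neq0$ and the stationary phase localisation to $m\asymp M_s$), while the decisive saving comes from the congruence determining $(c_2,q_2)$ and $h_2$. Without that step the proposal cannot produce the claimed $Nt^{1/6}/(K^{5/2}Q^{3/2})$ factor.
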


\begin{proof}
In the complementary range when the gap is not small, i.e. \eqref{gap-small} does not hold, we apply the Poisson summation on the sum over $m$. We get that the contributions of these terms to \eqref{off-diag-omega} is bounded by
\begin{align}
\label{omega-after-poisson-od}
 \frac{N}{\ell^2}\;&\mathop{\mathop{\sum}_{m\in\mathbb{Z}}\mathop{\sum\sum}_{q_1,q_2\in  \mathcal{Q}}\mathop{\sum\sum}_{c_1,c_2\sim C}\mathop{\sum\sum}_{N<n_1,n_2\leq 2N}\mathop{\sum\sum}_{h_i\equiv  \bar{\ell}\bmod{q_i}}}_{\substack{\overline{h_1}\:c_2q_2- \overline{h_2}\:c_1q_1\equiv m\bmod{c_1c_2q_1q_2}\\ c_1q_1h_1-c_2q_2h_2\gg M/\ell}} \:|\mathfrak{H}|
\end{align}
where the integral is given by
\begin{align*}
&\mathfrak{H}=\int W(y)\:W\left(\frac{2n_1A_1}{HB_1}\right)W\left(\frac{2n_2A_2}{HB_2}\right)\:e\Bigl[A_1^\star y+ A_2^\star y^2+A_3^\star y^3+\dots\Bigr]\mathrm{d}y.
\end{align*}
The coefficients in the Taylor expansion can be computed explicitly, and we get
$$
A_1^\star =\frac{2\sqrt{N}}{\ell}\left(\frac{\sqrt{n_1}}{c_1q_1}-\frac{\sqrt{n_2}}{c_2q_2}\right)+O\left(E\right)
$$
and
$$
A_2^\star =\frac{N}{\ell^2}\left(\frac{\pi}{t}\left(\frac{n_1}{(c_1q_1)^2}-\frac{n_2}{(c_2q_2)^2}\right)-\frac{m}{c_1c_2q_1q_2}\right)+O(E),
$$
and $A_j^\star=O(E)$ for all $j\geq 3$, 
where
$$
E=\frac{N\Delta_0}{QC\ell}.
$$
Note that the $j$-th derivative of the weight function in the integral is bounded by $(N/t^{1/3}QC\ell)^j\ll E^j$. Since we are in the case where \eqref{gap-small} does not hold, we get that the first term of $A_1^\star$ is larger than the error term $E$ (recall \eqref{n-rest-direct-od}), and hence it follows using Lemma~\ref{lemma:small-int} that the integral is negligibly small unless 
$$
0\neq m\asymp \frac{(QC\ell)^2}{N}\:\frac{\sqrt{N}}{\ell}\left(\frac{\sqrt{n_1}}{c_1q_1}-\frac{\sqrt{n_2}}{c_2q_2}\right)=M_s.
$$ 
In this case we estimate the integral using the second derivative bound 
$$
\mathfrak{H}\ll \frac{C\ell Q}{\sqrt{N|m|}}.
$$
It now remains to count the number of vectors $(m,h_1,h_2,c_1,c_2,q_1,q_2)$ satisfying the congruence conditions. This is not an easy task, and we only seek to obtain a good upper bound. Given $(m,c_1,q_1)$, there are at most $O(tC/N)$ many $h_1$. Then $c_2q_2$ is determined modulo $c_1q_1$. Hence there are at most $O(t^\varepsilon)$ many $(c_2,q_2)$. Finally we get at most $O(t/N)$ many $h_2$. It follows that the number of vectors is bounded by 
$$
t^\varepsilon\: M_s CQ\:\left(\frac{tC}{N}\right)\:\left(\frac{t}{N}\right)\ll t^\varepsilon\: M_s \:Q\:\left(\frac{tC}{N}\right)^2.
$$
Then we count the number of $n_i$ using the restriction \eqref{n-rest-direct-od}. It follows that the contribution of these terms without `small gap', to \eqref{direct-od-cauchy} is dominated by
$$
\sum_\ell \frac{KH\sqrt{Q}}{\sqrt{C}t^{1/6}\ell}\;\frac{N^{1/4}C^{7/4}Q^{5/4}t^{2/3}}{\ell^{1/4}}.
$$
This is dominated by the bound given in the statement of the lemma.
\end{proof}

\begin{remark}
The bound given in Lemma~\ref{lemma:od-big} is satisfactory if
\begin{align}
\label{range-N-2}
N\ll Q^{3/2}K^{5/2}t^{-1/6-\delta}.
\end{align} 
If we pick $Q$ according to \eqref{pick-q}, and $K$ according to \eqref{pick-k} then the above inequality holds if $\delta<1/30$. 
Also note that the bound in the lemma is off from the expected bound by a factor of $Q^{1/2}$, as we lost a congruence modulo $q_2$ in our count.
\end{remark}

We now state in form of a proposition, what we have achieved in this section\\

\begin{proposition}
\label{prop-1}
Let $H=N/t^{1/3}\ll N^{1/2}t^{1/3-\delta}$. Let $Q$ be as given in \eqref{pick-q} and $K$ be as given in \eqref{pick-k}. 
We have 
$$\mathcal{O}\ll \sqrt{N}HQ^2K t^{1/3-\delta}
$$
if $\delta<1/42$. 
\end{proposition}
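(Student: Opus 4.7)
The plan is to combine the technical lemmas already established in this section and then carefully optimize the parameters $Q$ and $K$. Since all the real analytic work (Poisson, stationary phase, Cauchy, counting) has been done upstream, the proposition is essentially a bookkeeping statement that records which choice of $(Q,K)$ makes every constraint fit simultaneously.

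First I would apply Lemma~\ref{lemma:od} to reduce $\mathcal{O}$ to the family $\{\mathcal{O}(C)\}$ with $C\ll Nt^\varepsilon/QK^2$, which is legal under the condition \eqref{1st-cond-QK}, $N\ll QK^4 t^{-\varepsilon}$. With $H=N/t^{1/3}$ as in the hypothesis, Lemma~\ref{lemma:oc-after-poisson} converts $\mathcal{O}(C)$ into the post-Poisson expressions $\mathcal{O}_\pm(C)^\dagger$. Since the two signs are handled identically, I restrict to $\mathcal{O}(C)^\dagger:=\mathcal{O}_+(C)^\dagger$ and apply Cauchy--Schwarz together with the Ramanujan bound on average to land on the estimate \eqref{direct-od-cauchy},
$$
\mathcal{O}(C)^\dagger \ll \sum_\ell \frac{KH\sqrt{Q}}{\sqrt{C}\,t^{1/6}\ell}\,\Omega_\ell^{1/2}.
$$

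Next I would split $\Omega_\ell$ according to whether the linear form $c_1q_1h_1-c_2q_2h_2$ satisfies the small-gap condition \eqref{gap-small} or its complement, and invoke Lemma~\ref{lemma:od-small} and Lemma~\ref{lemma:od-big} for the two cases respectively. Combining the two bounds, summing over the dyadic $C$ (which costs only $t^\varepsilon$), one obtains
$$
\mathcal{O}\ll \sqrt{N}HQ^2K\, t^{1/3}\left(\frac{N^{3/2}t^{1/6}}{Q^{5/2}K^3}+\frac{N t^{1/6}}{K^{5/2}Q^{3/2}}\right),
$$
so the proof reduces to arranging each parenthetical factor to be $\ll t^{-\delta}$.

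The main obstacle---and the reason for the threshold $\delta<1/42$---lies in the parameter optimization. The first factor forces the condition \eqref{range-N-1}, $N\ll Q^{5/3}K^{2}t^{-1/9-2\delta/3}$, and the choice \eqref{pick-q} $Q=N^{3/5}t^{1/15+2\delta/5}K^{-6/5}$ is the critical value that saturates it. With this $Q$, the second factor becomes the condition \eqref{range-N-2}, $N\ll Q^{3/2}K^{5/2}t^{-1/6-\delta}$. With $K$ chosen as in \eqref{pick-k} (which branches depending on whether $t^{1/3-2\delta}$ or $Nt^{-1/3-2\delta}$ is smaller), a short arithmetic case check verifies that \eqref{range-N-1}, \eqref{range-N-2}, the upper bound $Q\ll t^{1/3}$ needed in Lemma~\ref{lemma:od-small}, the lower bound $Q\geq 1$, and the condition \eqref{1st-cond-QK} of Lemma~\ref{lemma:od} all hold simultaneously exactly when $\delta<1/42$; the tightest single constraint turns out to be $Q\ll t^{1/3}$. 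Once this compatibility check is complete, the advertised bound $\mathcal{O}\ll \sqrt{N}HQ^2K\,t^{1/3-\delta}$ follows immediately.
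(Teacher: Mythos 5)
Your proposal follows the paper's argument essentially verbatim: reduce $\mathcal{O}$ to $\mathcal{O}(C)$ via Lemma~\ref{lemma:od}, apply Poisson and stationary phase (Lemma~\ref{lemma:oc-after-poisson}), use Cauchy to get \eqref{direct-od-cauchy}, split into small/large gap regimes (Lemmas~\ref{lemma:od-small} and~\ref{lemma:od-big}), and then verify that the choices \eqref{pick-q} and \eqref{pick-k} make both bracketed factors $\ll t^{-\delta}$ with the binding constraint $Q\ll t^{1/3}$ giving $\delta<1/42$. This is exactly the paper's proof, so no further comparison is needed.
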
 

\bigskip


\section{Applying functional equation: Dual side}
\label{fe}

In the rest of the paper we will seek to prove sufficient bound for $\mathcal{F}$. Our aim is to prove Proposition~\ref{prop-2} which we state at end of the paper. \\

Let us introduce the family of dual sums
\begin{align}
\label{to-ana}
&\mathcal{D} =\frac{N}{K^2}\sum_{\substack{k=1\\k\:\text{odd}}}^\infty W\left(\frac{k-1}{K}\right) \;\sum_{n\sim N}\sum_{\substack{\psi\bmod{q}\\\psi(-1)=- 1}}\varepsilon_\psi^2\;\sum_{f\in H_k(q,\psi)}\omega_f^{-1}\\
\nonumber\times &\mathop{\sum\sum}_{m\ell^2\sim \tilde{N}}\: \lambda_F(m)\overline{\lambda_f(mq^2)}\:\bar{\psi}(\ell)\;\mathop{\sum}_{h\in\mathbb Z} \overline{\lambda_f(n+h)}(n+h)^{it}W\left(\frac{h}{H}\right),
\end{align}
with $\tilde{N}\asymp Q^2K^4/N$. (In this paper the notation $A\asymp B$ means that $B/t^\varepsilon\ll A\ll Bt^\varepsilon$, with implied constants depending on $\varepsilon$.) \\

\begin{Lemma}
\label{lemma:dualize}
We have
$$
\mathcal{F}\triangleleft \mathcal{D}.
$$
\end{Lemma}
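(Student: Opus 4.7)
The plan is to apply the Rankin--Selberg functional equation for $L(s, F \otimes f)$ to dualize the $m$-sum inside $\mathcal{F}$, treating all the other variables ($k, q, \psi, f, \ell, n, h$) as spectators. Freezing them, the relevant inner sum is
\[
T_f(\ell) \;=\; \sum_{m=1}^{\infty} \lambda_F(m)\,\lambda_f(m)\,U\!\left(\frac{m\ell^2}{N}\right).
\]
Mellin inversion of $U$ gives
\[
T_f(\ell) \;=\; \frac{1}{2\pi i}\int_{(\sigma)} \tilde U(s)\,\bigl(N/\ell^2\bigr)^s \,L(s, F\otimes f)\,\d s
\]
for $\sigma = 2$ (up to the local Euler factor at the ramified prime $q$, which is absorbed later).

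\textbf{Functional equation and dualization.} Next I would shift the $s$-contour from $\sigma = 2$ past $\sigma = 1/2$ down to $\sigma = -1$; no poles are crossed, since $F$ is cuspidal and hence $L(s, F \otimes f)$ is entire. On the new contour apply the standard Rankin--Selberg functional equation
\[
L(s, F \otimes f) \;=\; \varepsilon(F\otimes f)\, q^{2(1-2s)}\, X_\infty(s)\, L(1-s, F \otimes \bar f),
\]
where $X_\infty(s)$ is the ratio of the four archimedean $\Gamma$-factors of this degree-$4$ $L$-function. Since $F$ has trivial nebentypus while $f$ carries $\psi$, the global root number works out to $\varepsilon(F \otimes f) = \varepsilon_\psi^2$. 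Expanding $L(1-s, F \otimes \bar f)$ into its Dirichlet series (coefficients $\lambda_F(m)\overline{\lambda_f(m)}$ away from $q$) and interchanging sum and integral, the resulting Mellin--Barnes integral localizes---by stationary-phase analysis of $X_\infty(s)$---to a smooth cutoff around the dual length $m\ell^2 \sim \tilde N := Q^2 K^4 / N$ with a prefactor of order $N/K^2$. Each of the four $\Gamma$-factors has spectral parameter of size $\asymp K$, which accounts for the exponent $K^4$ in $\tilde N$. The coefficient $\overline{\lambda_f(m q^2)}$ on the dual side (in place of $\overline{\lambda_f(m)}$) comes from the ramified local data at $q$: the local $\varepsilon$-factor together with the Atkin--Lehner involution $W_{q^2}$ shift the index by $q^2$ and absorb the conductor factor $q^{2(1-2s)}$. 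The $\bar\psi(\ell)$ in place of the original $\psi(\ell)$ emerges from the same root-number bookkeeping.

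\textbf{Assembly and main obstacle.} Substituting the dualized $T_f(\ell)$ back into \eqref{F}, the $k$-, $f$-, $\psi$-, $q$-, $n$-, $h$-sums and the spectral weight $\omega_f^{-1}$ pass through untouched, producing exactly the expression $\mathcal{D}$ of \eqref{to-ana}. Dyadic subdivision of $\ell$ and any residual truncations are absorbed into the $\triangleleft$-family, whose size remains $O(t^\varepsilon)$. The main obstacle is the local bookkeeping at the ramified prime $q$: one must verify that the local $\varepsilon$-factor produces precisely the coefficient $\overline{\lambda_f(mq^2)}$ and that the global root number condenses to $\varepsilon_\psi^2$. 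The stationary-phase analysis of the archimedean gamma ratio, the identification of the dual length $\tilde N$, and the prefactor $N/K^2$ are all routine once the functional equation is correctly set up.
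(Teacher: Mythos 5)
Your proposal follows essentially the same route as the paper: Mellin inversion of $U$, application of the Rankin--Selberg functional equation for $L(s, F\times f)$, localization of the dual length $m\ell^2 \asymp Q^2K^4/N$ via the stationary phase of the archimedean $\Gamma$-ratio (which the paper makes explicit with a Stirling expansion, concluding the ratio oscillates like $k^{4i\tau}$ and is therefore mild), and reassembly into $\mathcal{D}$. The root-number bookkeeping you flag as the main obstacle is indeed what the paper spells out, with $\eta^2 = i^{2k}g_\psi^2/(\lambda_f(q^2)q)$; using $|g_\psi|^2 = q$ and $|\lambda_f(q^2)| = 1$ for a newform of prime level $q$ with primitive nebentypus, the $1/\lambda_f(q^2)$ converts $\overline{\lambda_f(m)}$ into $\overline{\lambda_f(mq^2)}$ and the net sign reduces to $\varepsilon_\psi^2$ up to a harmless $\pm 1$, exactly as you surmise.
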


\begin{proof}
Consider the Fourier sum $\mathcal{F}$ (as given in\eqref{F}), where we will apply the functional equation of the $L$-function $L(s,F\times f)$, to dualise the sum over $(m,\ell)$. By the Mellin inversion formula we get
\begin{align}
\label{sum-1}
&\mathop{\sum\sum}_{m,\ell=1}^\infty \lambda_F(m)\lambda_f(m)\psi(\ell)U\left(\frac{m\ell^2}{N}\right)=\frac{1}{2\pi i}\int_{(2)}N^s\tilde{U}(s)L(s,F\times f)\mathrm{d}s,
\end{align}
where $\tilde{U}$ stands for the Mellin transform of $U$. Using the functional equation of the $L$-function $L(s,F\times f)$ we get
\begin{align*}
\frac{1}{2\pi i}\int_{(2)}N^s\tilde{U}(s)\;\eta^2 \left(\frac{q}{4\pi^2}\right)^{1-2s}\frac{\gamma_k(1-s)}{\gamma_k(s)} L(1-s,F\times \bar f)\mathrm{d}s,
\end{align*}
where 
$$\gamma_k(s)=\Gamma\left(s+\frac{(k-k_0)}{2}\right)\Gamma\left(s+\frac{(k+k_0)}{2}-1\right).$$
The sign of the functional equation is given by 
\begin{align*}
\eta^2=i^{2k}\frac{g_\psi^2}{\lambda_f(q^2)q}
\end{align*}
where $g_\psi$ is the Gauss sum associated with the character $\psi$. Next we expand the $L$-function into a Dirichlet series and take dyadic subdivision. By shifting contours to the right or left we can show that the contribution of the terms from the blocks with $m\ell^2 \notin [Q^2K^4t^{-\varepsilon}/N,Q^2K^4t^{\varepsilon}/N]$ is negligibly small.  Hence the sum in \eqref{sum-1} essentially gets transformed into
\begin{align}
\label{sum-11}
\varepsilon_\psi^2\:&\frac{N}{QK^2}\:\frac{\gamma_k(1/2+i\tau)}{\gamma_k(1/2-i\tau)}\: \mathop{\sum\sum}_{m,\ell=1}^\infty \lambda_F(m)\overline{\lambda_f(mq^2)}\bar{\psi}(\ell)U\left(\frac{m\ell^2}{\tilde N}\right)
\end{align}
where $\varepsilon_\psi$ is the sign of the Gauss sum $g_\psi$ and $\tilde N\asymp \frac{Q^2K^4}{N}.
$
We are keeping a $\tau$, with $|\tau|\ll t^\varepsilon$, in the gamma factor as we need to keep track of possible oscillation in the $k$ aspect.
Now let us study the gamma factor. Using Stirling series 
$$
\Gamma(z)=\sqrt{\frac{2\pi}{z}}\left(\frac{z}{e}\right)^z\left(\sum_{j=1}^J\frac{a_j}{z^j}+O(|z|^{-J})\right)
$$
which holds for $z=k/2+i\tau$ as above, 
it turns out that this ratio of the gamma functions is essentially equivalent to  
$$
e(\arg [(k/2+i\tau)^{k/2+i\tau}]/\pi)=e(\tau \log (k^2/4+\tau^2)/2\pi +k\arg (k/2+i\tau)/2\pi).
$$
Now expanding $\log(k^2/4+\tau^2)=2\log (k/2)+O(\tau^2/k^2)$, and 
$$\arg(k/2+i\tau)=\tan^{-1}(2\tau/k)=2\tau/k+O(\tau^3/k^3),$$ 
we get that the ratio of the gamma functions essentially behaves like  $k^{4i\tau}$. So the gamma factor oscillates mildly, which can be neglected. The lemma follows.
\end{proof}

\begin{remark}
Observe that we have dropped some of the smooth weight functions, as they do not play any role whatsoever in the upcoming analysis. Also we have dropped the $q$ sum, at the cost of a multiplier of size $Q$. Indeed the $q$ sum is not involved in the dual side at all. 
\end{remark}

\begin{Lemma}
Suppose 
\begin{align}
\label{2nd-cond-QK}
Q\ll K^2t^{-\varepsilon}.
\end{align}
We have
$$
\mathcal{D}\triangleleft \mathcal{O}^\star(C)
$$
where for $C\ll Qt^\varepsilon$ dyadic 
\begin{align}
\label{dual-od}
&\mathcal{O}^\star(C)= \frac{N\sqrt{\ell}}{\sqrt{CQ}K^2}\sum_{n\sim N}\:\mathop{\sum}_{\substack{m\sim \tilde{N}/\ell^2}} \lambda_F(m)\\
\nonumber &\times \mathop{\sum}_{h\in\mathbb Z} (n+h)^{it}W\left(\frac{h}{H}\right)\:\sum_{\substack{c\sim C\\(c,q)=1}} S(n+h,m;c)\:e\left(\frac{2\sqrt{m (n+h)}}{c}\right).
\end{align}
\end{Lemma}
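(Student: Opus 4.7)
The overall strategy is to apply the Petersson trace formula to the spectral sum over $f\in H_k(q,\psi)$ inside $\mathcal{D}$, just as in the off-diagonal analysis of Section~\ref{off-diag-sec}, but now with the roles of the ``length'' and ``conductor'' sides reversed. The first step is to fold the two Hecke coefficients $\overline{\lambda_f(mq^2)}$ and $\overline{\lambda_f(n+h)}$ into a single coefficient via the multiplicativity relation
\[
\overline{\lambda_f(mq^2)\lambda_f(n+h)}=\sum_{d\mid (m,n+h)}\overline{\psi}(d)\,\overline{\lambda_f\!\left(\tfrac{mq^2(n+h)}{d^2}\right)}.
\]
The generic $d=1$ term dominates; the $d\geq 2$ terms give a thinner diagonal and are treated analogously (with an additional $1/d$ saving from the reduced length). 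I would also fix a dyadic $\ell$ at the cost of $t^\varepsilon$ and a single $q\in \mathcal Q$ at the cost of $Q$, which will be absorbed into the eventual prefactor.

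Applying Petersson to $\sum_f \omega_f^{-1}\overline{\lambda_f(N_2)}$ with $N_2=mq^2(n+h)/d^2$ yields a diagonal forcing $N_2=1$ (which is negligible because of the size of $N_2$) plus Kloosterman sums $S_{\bar\psi}(1,N_2;c)$ with $q\mid c$. Writing $c=qc'$ with $(c',q)=1$ (the residual contribution of $q\mid c'$ is smaller by a factor of $q$), the CRT decomposition factorises
\[
S_{\bar\psi}(1,mq^2(n+h);qc')=\bar\psi(c')\,g_{\bar\psi}\,S(1,m(n+h);c'),
\]
because the $q^2$ in the second entry collapses the $q$-piece to a Gauss sum. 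The identity $S(1,m(n+h);c')=S(n+h,m;c')$ then produces the classical Kloosterman sum appearing in \eqref{dual-od}, and the Bessel argument becomes $4\pi\sqrt{m(n+h)}/c'$.

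Next, the sum over odd $k$ weighted by $W((k-1)/K)$ against $J_{k-1}(4\pi\sqrt{m(n+h)}/c')$ is analysed by Lemmas~\ref{lemma:sum-bessel} and~\ref{lemma:small-int} exactly as in Lemma~\ref{lemma:od}. Here the Bessel argument has size $x\asymp QK^2/(\ell c')$; the integral is negligible unless $x\gg K^{2-\varepsilon}$, which truncates the $c'$ sum to $C\ll Qt^\varepsilon$, and the Taylor expansion of the cosine may be retained only to second order provided $x\ll K^{4-\varepsilon}$, i.e.\ $Q\ll K^2 t^{-\varepsilon}$, which is exactly the hypothesis \eqref{2nd-cond-QK}. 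Stationary phase then replaces the $k$-sum by $e(\pm 2\sqrt{m(n+h)}/c')\cdot K/\sqrt{x}\asymp e(2\sqrt{m(n+h)}/c')\sqrt{c'\ell/Q}$, up to mildly oscillating factors absorbed by $\triangleleft$.

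Finally I execute the character sum. Using $\varepsilon_\psi^2=g_\psi^2/q$ in $\mathcal{D}$ and $g_\psi g_{\bar\psi}=-q$ for odd $\psi$ mod the prime $q$, the combined twist $\sum_\psi \varepsilon_\psi^2\,\bar\psi(\ell c')\,g_{\bar\psi}$ collapses to $-\sum_\psi g_\psi\,\bar\psi(\ell c')$, which by orthogonality is an explicit trigonometric quantity of modulus $O(q)$. Assembling the prefactors
\[
\frac{N}{K^2}\cdot\frac{1}{qc'}\cdot q\cdot\sqrt{\frac{c'\ell}{Q}}\;\asymp\;\frac{N\sqrt{\ell}}{K^2\sqrt{CQ}}
\]
gives exactly the constant in $\mathcal{O}^\star(C)$, completing the reduction $\mathcal{D}\triangleleft \mathcal{O}^\star(C)$. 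The main technical obstacle is the bookkeeping in the CRT factorisation of the Kloosterman sum and the verification that the $k$-sum analysis from Section~\ref{off-diag-sec} transfers verbatim; once the condition $Q\ll K^2 t^{-\varepsilon}$ is recognised as the dual analogue of \eqref{1st-cond-QK}, the rest is a parallel computation.
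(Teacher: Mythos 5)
Your proposal is correct and arrives at the same off-diagonal $\mathcal{O}^\star(C)$ with the same prefactor, but the preprocessing before Petersson is organized differently from the paper's. You fold the two conjugated eigenvalues via the Hecke multiplicativity relation $\overline{\lambda_f(mq^2)\lambda_f(n+h)}=\sum_{d\mid (mq^2,n+h),\,(d,q)=1}\bar\psi(d)\overline{\lambda_f(mq^2(n+h)/d^2)}$ and then apply Petersson with a trivial first argument, introducing an extra $d$-sum whose $d\geq 2$ tail must be bounded separately. The paper instead writes $m=q^\nu m'$ with $q\nmid m'$, uses the contragredient relation $\overline{\lambda_f(m')}=\bar\psi(m')\lambda_f(m')$ to obtain $\lambda_f(m')\overline{\lambda_f(q^{2+\nu}(n+h))}$, and applies Petersson with both arguments nontrivial; this avoids any $d$-sum and yields $S_\psi(q^{2+\nu}(n+h),m;cq)$ directly. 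The paper then enforces $(c,q)=1$ by noting that the Kloosterman sum vanishes when $q\|c$; your claim that the $q\mid c'$ part is ``smaller by a factor of $q$'' should in fact be that it vanishes, since $\psi$ is primitive mod $q$ and the Gauss-type sum at modulus $q^2$ is zero — this is a minor inaccuracy, not a gap. Your CRT factorisation $S_{\bar\psi}(1,mq^2(n+h);qc')=\bar\psi(c')g_{\bar\psi}S(1,m(n+h);c')$ is correct, your identification of $Q\ll K^2t^{-\varepsilon}$ as the condition permitting a quadratic Taylor truncation matches the role of \eqref{1st-cond-QK} in Lemma~\ref{lemma:od}, and your character-sum evaluation using $\varepsilon_\psi^2 g_{\bar\psi}=-g_\psi$ (for odd $\psi$ mod the prime $q$) reproduces the factor of size $q$ times $e(\pm c\ell/q)$ that the paper quotes; the prefactor bookkeeping checks out. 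The only thing to tidy up is that the Hecke relation needs the condition $(d,q)=1$ explicitly (vacuous in the generic case $q\nmid m, q\nmid n+h$) and the implicit use of $q\nmid(n+h)$ when collapsing the eigenvalues, an assumption the paper also makes tacitly.
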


\begin{proof}
In the expression for $\mathcal{D}$, one writes $m=q^\nu m'$ with $q\nmid m'$, so that $\overline{\lambda_f(m')}=\lambda_f(m')\bar{\psi}(m')$.
We get
\begin{align*}
&\frac{N}{K^2}\sum_{\nu=0}^\infty \lambda_F(q^\nu)\sum_{\substack{k=1\\k\:\text{odd}}}^\infty W\left(\frac{k-1}{K}\right) \; \sum_{n\sim N}\;\sum_{\substack{\psi\bmod{q}\\\psi(-1)=- 1}}\varepsilon_\psi^2\;\sum_{f\in H_k(q,\psi)}\omega_f^{-1}\\
\nonumber\times &\mathop{\sum\sum}_{\substack{m\ell^2\sim\tilde{N}}/q^\nu} \lambda_F(m)\lambda_f(m)\:\bar{\psi}(m\ell)\:\mathop{\sum}_{h\in\mathbb Z} \overline{\lambda_f(q^{2+\nu}(n+h))}(n+h)^{it}W\left(\frac{h}{H}\right).
\end{align*}
We apply the Petersson formula to this dual sum. There is no diagonal contribution as $\psi(m)=0$ when $q|m$. Hence we are only left with the (dual) off-diagonal which is given by 
\begin{align}
\label{O-star}
\mathcal{O}^\star =&\frac{N}{K^2}\sum_{\nu=0}^\infty \lambda_F(q^\nu)\sum_{\substack{k=1\\k\:\text{odd}}}^\infty W\left(\frac{k-1}{K}\right) \;\sum_{n\sim N}\;\sum_{\substack{\psi\bmod{q}\\\psi(-1)=- 1}}\:\varepsilon_\psi^2\\
\nonumber\times &\mathop{\sum\sum}_{\substack{m\ell^2\sim\tilde{N}/q^\nu}} \lambda_F(m)\bar{\psi}(m\ell)\:\mathop{\sum}_{h\in\mathbb Z} (n+h)^{it}V\left(\frac{h}{H}\right)\\
\nonumber \times &\sum_{c=1}^\infty \frac{i^{-k}S_\psi(q^{2+\nu}(n+h),m;cq)}{cq}J_{k-1}\left(\frac{4\pi\sqrt{mq^\nu(n+h)}}{c}\right).
\end{align}\\

Consider the sum over $k$ which is given by 
\begin{align}
\label{k-sum}
	\sum_{\substack{k=1\\k\:\text{odd}}}^\infty W\left(\frac{k-1}{K}\right)i^{-k} J_{k-1}\left(\frac{4\pi\sqrt{mq^\nu (n+h)}}{c}\right).
\end{align}
This sum is exactly same as we had for the direct off-diagonal before (only now it is independent of $q$ in the generic case $\nu=0$). Notice that we have deliberately dropped the gamma factors, which were mildly oscillating. So we assume that $W^{(j)}\ll_j t^{\varepsilon j}$ in the present case. Temporarily we set $x=2\sqrt{mq^\nu (n+h)}/c$. We choose to have \eqref{2nd-cond-QK} so that we can again restrict ourselves to the quadratic phase in the expansion, and the above sum essentially reduces to
\begin{align*}
	e(\pm x)\iint_{\mathbb R^2}W(u)F(v)e\left(uv\mp \frac{4\pi^2 xv^2}{K^2}\right)\mathrm{d}u\mathrm{d}v.
\end{align*}
As before this reduces to
$$
e(x)\frac{K}{\sqrt{x}}
$$
with $x\gg K^{2-\varepsilon}$. In the complementary range the integral is negligibly small. With this the off-diagonal essentially reduces to 
\begin{align}
& \frac{N\sqrt{\ell}}{\sqrt{C}K^2Q^{3/2}}\;\sum_{n\sim N}\;\sum_{\substack{\psi\bmod{q}\\\psi(-1)=- 1}}\:\varepsilon_\psi^2\:\sum_{\nu=0}^\infty \lambda_F(q^\nu)\mathop{\sum}_{m\sim \tilde{N}/q^\nu \ell^2}\lambda_F(m)\bar{\psi}(m\ell)\\
\nonumber \times \mathop{\sum}_{h\in\mathbb Z} &(n+h)^{it}W\left(\frac{h}{H}\right)\sum_{c\sim C} S_\psi(q^{2+\nu}(n+h),m;cq)\:e\left(\frac{2\sqrt{mq^\nu (n+h)}}{c}\right).
\end{align}
with $C\ll Qt^\varepsilon/\ell$. If $q\|c$ then the Kloosterman sum vanishes as $q\nmid m$, so we necessarily have $(c,q)=1$. The Kloosterman sum splits, and we get that
\begin{align*}
	\sum_{\substack{\psi\bmod{q}\\\psi(-1)=- 1}}\:\varepsilon_\psi^2\bar{\psi}(m\ell)S_\psi(q^{2+\nu}(n+h),m;cq)
\end{align*}
is a difference of two terms
\begin{align*}
	qS(q^{\nu}(n+h),m;c)e\left(\pm\frac{c\ell}{q}\right).
\end{align*}
The last factor is non-oscillating in the generic situation, as we only need to consider $c$ in the range $c\ll Qt^\varepsilon$ (and $\ell$ can be as small as $1$). Note that we are dropping the sum over $\ell$ as the object is essentially independent of $\ell$. Of course we need to execute the sum over $\ell$ trivially at the end. Also we will drop the factor $e(c\ell/q)$ from the expressions, as the sums over $\ell$ and $q$ are executed trivially at the end, and in our analysis below we will not apply any summation formula on the sum over $c$. In the expressions below always bear in mind that the $c$ sums have some arithmetic weight of size $1$, which does not depend on any other sums in the expressions.
So we continue our analysis with 
\begin{align*}
& \frac{N\sqrt{\ell}}{\sqrt{CQ}K^2}\sum_{n\sim N}\;\sum_{\nu=0}^\infty \lambda_F(q^\nu)\:\mathop{\sum}_{\substack{m\sim \tilde{N}/q^\nu \ell^2\\(q,m)=1}} \lambda_F(m)\\
\nonumber &\times \mathop{\sum}_{h\in\mathbb Z} (n+h)^{it}W\left(\frac{h}{H}\right)\:\sum_{\substack{c\sim C\\(c,q)=1}} \:S(n+h,mq^\nu;c)\:e\left(\frac{2\sqrt{mq^\nu (n+h)}}{c}\right).
\end{align*}
Now we execute the sum over $\nu$ by gluing $q^\nu$ back to $m$, and this yields the expression given in the lemma.
\end{proof}

\begin{remark}
At this point we can apply the Voronoi summation to get a bound which is satisfactory for small values of $C$. Indeed applying the Voronoi summation we get that \eqref{dual-od} is bounded by 
\begin{align*}
N^{1/2}HQ^2Kt^{1/3}\:\left\{\frac{N^{3/2}C}{(QK)^2t^{1/3}}+\frac{C}{Q}\right\}.
\end{align*}
The second term accounts for the diagonal contribution $m=n+h$ (after Voronoi) and the first term accounts for the other terms. The bound is satisfactory if
\begin{align}
\label{small-c-range-initial}
C\ll \min\left\{\frac{(QK)^2t^{1/3-\delta}}{N^{3/2}\ell},Qt^{-\delta}\right\}.
\end{align}
\end{remark}

\begin{remark}
If we pick $Q$ according to \eqref{pick-q} and $K$ according to \eqref{pick-k} then the condition \eqref{2nd-cond-QK} is satisfied if $\delta<1/20$.
\end{remark}

\bigskip

\section{Stationary phase analysis for dual off-diagonal}

\begin{Lemma}
\label{lemma:dual-od-after-poisson}
We have
$$
\mathcal{O}^\star(C)\ll \mathcal{O}^{\star \dagger}(C)
$$
where
\begin{align}
\label{dual-O-after-poisson}
&\mathcal{O}^{\star \dagger}(C)=\frac{NH\sqrt{\ell}}{\sqrt{CQ}K^2t^{1/6}}\:\mathop{\sum}_{m\sim \tilde{N}/\ell^2}\lambda_F(m)\\
\nonumber &\times \sum_{c\sim C}\mathop{\sum\sum}_{\substack{N<n\leq 2N\\ h\sim tC/N}} \:n^{it}e\left(\frac{2\sqrt{mn}}{c}-\frac{hn}{c}-\frac{\bar{h}m}{c}+\frac{A^2}{B}\right)\;W\left(\frac{2nA}{HB}\right),
\end{align} 
where 
$$
A=\frac{t}{2\pi}+\frac{\sqrt{mn}}{c}-\frac{hn}{c},\;\;\;B=\frac{t}{\pi}+\frac{\sqrt{mn}}{c}
$$
\end{Lemma}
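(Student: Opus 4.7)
The plan is to apply Poisson summation to the $h$-sum in \eqref{dual-od} and then evaluate the resulting Fourier integral by stationary phase, in complete parallel with the proof of Lemma~\ref{lemma:oc-after-poisson}. Since the Kloosterman sum $S(n+h,m;c)$ depends only on $h \bmod c$, I first split the $h$-sum into residue classes mod $c$, separating the arithmetic factor from the smooth weight
$$
F(y) = (n+y)^{it}\,W(y/H)\,e\!\left(\frac{2\sqrt{m(n+y)}}{c}\right).
$$
Poisson on each residue class, followed by the standard evaluation of the resulting complete additive character sum—open $S(n+h_0,m;c)$ and sum over $h_0 \bmod c$ against $e(-\tilde h h_0/c)$, which picks out $a\equiv\tilde h \bmod c$—produces the dual arithmetic phase $e(\pm \tilde h n/c \pm \bar{\tilde h}m/c)$ with coprimality $(\tilde h,c)=1$, the factor $c$ from the character sum cancelling the $1/c$ from Poisson.

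The remaining task is to analyze $\hat F(\tilde h/c)$. Substituting $y=Hu$ gives $H\int W(u) e(\phi(u))\,du$ with
$$
\phi(u) = \frac{t}{2\pi}\log(n+Hu) + \frac{2\sqrt{m(n+Hu)}}{c} - \frac{\tilde h H u}{c}.
$$
This is exactly the integral analyzed in Lemma~\ref{lemma:oc-after-poisson}, with $cq$ replaced by $c$ and the shift variable renamed; the Taylor expansion \eqref{f-dual-initial} carries over verbatim, and the choice $H=N/t^{1/3}$ ensures that the cubic tail is $O(1)$ and can be discarded. With the quadratic truncation, the stationary point is $u_0 = 2nA/(HB)$, the phase evaluates to $\phi(0) + A^2/B$, and the second derivative satisfies $|\phi''(u_0)| \asymp H^2B/n^2 \asymp t^{1/3}$. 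Stationary phase therefore contributes an amplitude $H/\sqrt{|\phi''(u_0)|} \asymp H/t^{1/6}$, exactly the prefactor ratio between \eqref{dual-od} and \eqref{dual-O-after-poisson}, together with the weight $W(u_0) = W(2nA/(HB))$.

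The cutoff $u_0 \asymp 1$ forces $A \asymp HB/(2n) \asymp t^{2/3}$, and since $t/(2\pi)$ is the dominant term in $A = t/(2\pi) + \sqrt{mn}/c - \tilde h n/c$, this requires $\tilde h n/c \asymp t$, giving the new summation range $\tilde h \asymp tC/N$ that appears in \eqref{dual-O-after-poisson}. Renaming $\tilde h\to h$ and combining with $\phi(0) = \frac{t}{2\pi}\log n + 2\sqrt{mn}/c$ (which contributes $n^{it}\,e(2\sqrt{mn}/c)$) reproduces the expression $\mathcal O^{\star\dagger}(C)$. The opposite sign choice $e(-2\sqrt{m(n+h)}/c)$ would leave the phase with no stationary point in the support of $W$, and by Lemma~\ref{lemma:small-int} such a contribution is negligible. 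The main obstacle here is purely bookkeeping: there is no new analytic difficulty beyond what has already been overcome in the direct off-diagonal case.
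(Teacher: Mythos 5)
Your proof follows exactly the same route as the paper: Poisson summation on $h$ with modulus $c$, evaluation of the complete character sum to produce $c\,e(-hn/c-\bar h m/c)$, the substitution $y=h/H$, the Taylor expansion of the phase identical (with $cq\to c$) to \eqref{f-dual-initial}, quadratic truncation justified by $H=N/t^{1/3}$, and quadratic stationary phase yielding the stationary point $y_0=2nA/(HB)$, the phase shift $A^2/B$, the amplitude $H/t^{1/6}$ from $|\phi''|\asymp t^{1/3}$, and the localisation $h\asymp tC/N$ from $A\asymp t^{2/3}$. This matches the paper's proof in all essentials. (Your closing remark about the opposite sign $e(-2\sqrt{m(n+h)}/c)$ is harmless but moot: in \eqref{dual-od} only the $+$ sign appears, the $-$ sign having already been discarded during the earlier Bessel-sum analysis.)
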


\begin{proof}
Consider the sum over $h$ in \eqref{dual-od}, which is given by
\begin{align*}
\mathop{\sum}_{h\in\mathbb Z} (n+h)^{it}S(n+h,m;c)\:e\left(\frac{2\sqrt{m(n+h)}}{c}\right) W\left(\frac{h}{H}\right).
\end{align*}
We apply the Poisson summation formula with modulus $c$ to arrive at
\begin{align*}
	\frac{H}{c}\sum_{h\in\mathbb{Z}}\;\mathfrak{C}\;\mathfrak{I}
\end{align*}
where the character sum is given by
\begin{align*}
\mathfrak{C}=\sum_{b\bmod{c}}\;S(b+n,m;c)\: e\left(\frac{bh}{c}\right)=c\;e\left(-\frac{\bar{h}m}{c}-\frac{hn}{c}\right),
\end{align*}
and the integral is given by
\begin{align*}
\mathfrak{I}=\int W(y) e(f(y))\mathrm{d}y,
\end{align*}
where (we temporarily set)
\begin{align*}
f(y)= \frac{t}{2\pi}\log (n+Hy)+\frac{2\sqrt{m(n+Hy)}}{c}-\frac{Hhy}{c}.
\end{align*}
We have the Taylor expansion
\begin{align}
\label{f-dual}
f(y)= &\left[\frac{t}{2\pi}\log n+\frac{2\sqrt{mn}}{c}\right]+\frac{Hy}{n}\left[\frac{t}{2\pi}+\frac{\sqrt{mn}}{c}-\frac{hn}{c}\right]\\
\nonumber &-\frac{H^2y^2}{n^2}\left[\frac{t}{4\pi}+ \frac{\sqrt{mn}}{4c}\right]+O\left(1\right).
\end{align}
(The error term is a `flat' function in the sense that $y^jE^{(j)}(y)\ll 1$ with respect to all the variables.) Notice that the phase function is exactly similar to what we had in the direct off-diagonal, with the only difference that we have $c$ in place of $cq$. Applying the stationary phase expansion it follows that the dual off-diagonal is given by \eqref{dual-O-after-poisson}.
\end{proof}

The weight function in \eqref{dual-O-after-poisson} implies that we have $A \asymp t^{2/3}$. This can be used to conclude the following restrictions 
\begin{align}
\label{h}
	D:=\frac{hn}{c}-\frac{t}{2\pi }\ll  t\:\max\left\{\frac{1}{t^{1/3}},\frac{QK^2}{C\ell t}\right\}=:t\Delta
\end{align}
and 
\begin{align}
\label{m-res-size}
\left|m-\frac{D^2c^2}{n}\right|\ll  \frac{Q^2K^4}{N\ell^2}\;\min\left\{1, \frac{C\ell t^{2/3}}{QK^2}\right\}=\frac{\tilde{N}}{\ell^2}\:\frac{1}{t^{1/3}\Delta}.
\end{align}
We derive that \eqref{dual-O-after-poisson} is  bounded by 
\begin{align}
\label{basic-bound-before-cauchy}
\sqrt{N}HQ^2Kt^{1/3}\;\frac{C^{3/2}Kt^{1/6}}{\sqrt{NQ}\ell^{3/2}},
\end{align}
which is not sufficient for our purpose. However this bound is fine for smaller values of $C$, namely in the range
\begin{align}
\label{small-c-range}
C\ll \frac{(NQ)^{1/3}t^{-2\delta/3}\ell^{1/3}}{K^{2/3}t^{1/9}}.
\end{align}
Below we proceed with $C$ which lies in the range complementary to both \eqref{small-c-range-initial} and \eqref{small-c-range}.
 \\

\section{Cauchy for dual off-diagonal}
\label{cauchy-2-sec}
We apply the Cauchy inequality to bound \eqref{dual-O-after-poisson} by
\begin{align}
\label{ineq-omega}
\frac{N^{3/2}H\sqrt{\ell}}{\sqrt{CQ}K^2t^{1/6}}\;\Omega_\ell^{1/2}
\end{align}
where now $\Omega_\ell$ is given by
\begin{align}
\label{omega-2}
\sum_{\substack{n\sim N}}\;&\Bigl|\mathop{\sum}_{m\sim \tilde{N}/\ell^2}\lambda_F(m)\sum_{c\sim C}\mathop{\sum}_{\substack{h\sim tC/N}} e\left(\frac{2\sqrt{mn}}{c}-\frac{hn}{c}-\frac{\bar{h}m}{c}+\frac{A^2}{B}\right)\;W\left(\frac{2nA}{HB}\right)\Bigr|^2.
\end{align}
We open the absolute square to arrive at 
\begin{align}
\label{omega-22}
 &\sum_{n\in \mathbb{Z}} W\left(\frac{n}{N}\right)\:\mathop{\sum\sum}_{m_1,m_2\sim \tilde{N}/\ell^2}\;\mathop{\sum\sum}_{h_1,h_2\sim tC/N}\mathop{\sum\sum}_{c_1,c_2\sim C} \:\lambda_F(m_1)\lambda_F(m_2)\\
\nonumber &\times e\left(-\frac{nh_1}{c_1}+\frac{nh_2}{c_2}+\frac{2\sqrt{m_1n}}{c_1}-\frac{2\sqrt{m_2n}}{c_2}+\frac{A_1^2}{B_1}-\frac{A_2^2}{B_2}\right)\\
\nonumber &\times  e\left(\frac{m_2\bar{h}_2}{c_2}-\frac{m_1\bar{h}_1}{c_1}\right)W\left(\frac{2nA_1}{HB_1}\right)W\left(\frac{2nA_2}{HB_2}\right)
\end{align}
where the subscript in $A_1$ indicates that the related parameters are $(m_1,h_1,c_1)$ and so on. 

\begin{Lemma}
\label{lemma:large-gap}
Suppose $Q$ is given by \eqref{pick-q}. The contribution of the terms with
\begin{align}
\label{small-gap-condition}
m_1c_2^2-m_2c_1^2\gg t^\varepsilon \frac{\tilde{N}C^2}{\ell^2}\frac{C\ell t\Delta^2}{QK^2}\asymp t^\varepsilon \frac{QK^2C^3\ell t\Delta^2}{N\ell^2}
\end{align}
to \eqref{omega-22} is negligibly small.
\end{Lemma}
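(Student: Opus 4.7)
The plan is to apply Lemma~\ref{lemma:small-int} to the inner $n$-integral appearing inside \eqref{omega-22}. Since the weight $W(n/N)$ is smooth on scale $N \gg t^\varepsilon$, one may convert the discrete $n$-sum into an integral via Poisson summation in $n\bmod 1$; the non-zero Fourier modes only shift the phase derivative by an integer and are handled identically.

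First I compute the derivative in $n$ of the phase $\Phi(n) = \Psi_1(n) - \Psi_2(n)$, where
\[
\Psi_i(n) = -\frac{h_i n}{c_i} + \frac{2\sqrt{m_i n}}{c_i} + \frac{A_i^2}{B_i}.
\]
Using the identity $A_i'(n) = A_i/n - B_i/(2n)$ (obtained by solving $A_i = t/(2\pi) + \sqrt{m_in}/c_i - h_in/c_i$ for $h_i/c_i$), a short calculation gives
\[
\Psi_i'(n) = -\frac{t}{2\pi n} + \frac{2A_i^2}{nB_i} - \frac{A_i^2 B_i'}{B_i^2},
\]
so that the $-t/(2\pi n)$ piece cancels in the difference $\Phi'(n)$. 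Using the support conditions $A_i \asymp t^{2/3}$, $B_i \asymp t$ together with the algebraic identity $A_1 - A_2 = \sqrt{n}\,\beta + n\alpha$, where $\alpha = (h_2c_1 - h_1c_2)/(c_1c_2)$ and $\beta = (\sqrt{m_1}c_2 - \sqrt{m_2}c_1)/(c_1c_2)$, I arrive at the leading structure
\[
\Phi'(n) \asymp \frac{1}{t^{1/3}}\left(\alpha + \frac{\beta}{\sqrt n}\right).
\]

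Next, I estimate $|\beta|$ using \eqref{small-gap-condition}. Rewriting $\beta = (m_1c_2^2 - m_2c_1^2)/[(\sqrt{m_1}c_2 + \sqrt{m_2}c_1)c_1 c_2]$ and inserting the sizes of the parameters yields $|\beta|/\sqrt n \gg t^\varepsilon \cdot t\Delta^2/N$. Since $\Delta \geq t^{-1/3}$, this gives $|\beta|/(\sqrt{n}\,t^{1/3}) \gg t^\varepsilon/N$, which is exactly the threshold required by Lemma~\ref{lemma:small-int} with $U \asymp N$. To conclude $|\Phi'(n)| \gg t^\varepsilon/N$ uniformly on $[N,2N]$ I must rule out cancellation of $\beta/\sqrt n$ by $\alpha$; this is done by observing that the unique zero of $\alpha + \beta/\sqrt n$ lies at $n_* = \beta^2/\alpha^2$, and combining (i) the integrality of $h_2c_1 - h_1c_2$ (so either $\alpha = 0$, in which case no cancellation is possible, or $|\alpha| \geq 1/(c_1 c_2)$) with (ii) the upper bound $|\alpha| \ll t\Delta/N$ from the support constraint $|D_i| \ll t\Delta$, one checks that $n_* \notin [N,2N]$ as soon as \eqref{small-gap-condition} holds. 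Lemma~\ref{lemma:small-int} then delivers the $n$-integral as $O(t^{-A\varepsilon})$ for any $A\geq 1$, and summing over the polynomially many configurations of $(m_i,h_i,c_i,q_1,q_2)$ yields the claimed negligibility.

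The main obstacle is the cancellation analysis of the previous paragraph: because the principal contributions to $\Phi'(n)$ from the square-root and linear pieces already cancel by design of the stationary-phase expansion leading to \eqref{dual-O-after-poisson}, only the sub-leading residual $\sim(\alpha + \beta/\sqrt n)/t^{1/3}$ survives, and the threshold in \eqref{small-gap-condition} is precisely calibrated so that this residual retains a lower bound of $t^\varepsilon/N$ throughout the support. Verifying this calibration---in particular, justifying the exponent $2$ on $\Delta$ in the threshold---requires the careful case analysis on the integer $h_2c_1-h_1c_2$ sketched above.
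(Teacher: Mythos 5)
The proposal attempts the same kind of argument as the paper (Poisson summation on the $n$-sum followed by the stationary phase criterion of Lemma~\ref{lemma:small-int}), but there are genuine gaps in the execution.

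First, the threshold you use is wrong. You apply Lemma~\ref{lemma:small-int} with ``$U\asymp N$'', but the weights $W\bigl(\tfrac{2nA_i}{HB_i}\bigr)$ vary on the scale $H=N/t^{1/3}$, not $N$, so the correct smoothness parameter is $U\asymp H$ and the required lower bound on the phase derivative is $\gg t^{\varepsilon}t^{1/3}/N$, not $\gg t^{\varepsilon}/N$. Your own computation gives $|\Phi'(n)|\gg t^\varepsilon\,t^{2/3}\Delta^2/N$; at the boundary case $\Delta=t^{-1/3}$ this is only $\asymp t^\varepsilon/N$, a full factor of $t^{1/3}$ short of the corrected threshold. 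So the argument fails precisely in the most delicate range.

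Second, the paper does not apply Poisson summation ``modulo $1$'' but modulo $c_1c_2$. This is not cosmetic: the linear phase $e\bigl(n(h_2/c_2-h_1/c_1)\bigr)=e(n\alpha)$ is periodic with period $c_1c_2$, and Poisson modulo $c_1c_2$ removes the term $n\alpha$ from the analytic phase entirely, converting it into the congruence condition $h_1c_2-h_2c_1\equiv n\bmod{c_1c_2}$. The residual continuous phase then has derivative essentially $\beta/\sqrt{n}+\tfrac{d}{dn}\bigl(A_1^2/B_1-A_2^2/B_2\bigr)-\mu$ with $\mu\in\mathbb{Z}$; this is what the paper estimates, and what permits a clean separation between $\mu=0$ and $\mu\neq 0$. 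Your approach keeps $n\alpha$ in the phase and tries to control the cancellation between $\alpha$ and $\beta/\sqrt n$ directly, which is much harder. The observation that $h_2c_1-h_1c_2\in\mathbb{Z}$ by itself does not give you the needed lower bound: even granting $n_*=\beta^2/\alpha^2\notin[N,2N]$, the function $\alpha+\beta/\sqrt n$ can still be small throughout $[N,2N]$ if $n_*$ is near (but outside) the interval, and your sketch does not verify $n_*\notin[N,2N]$ under \eqref{small-gap-condition} (indeed, using $|\alpha|\geq 1/(c_1c_2)$ and $|\beta|\lesssim QK^2/(\sqrt N\ell C)$ one only gets $n_*\lesssim (QK^2C/\ell)^2/N$, which need not lie outside $[N,2N]$ in all parameter ranges). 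The paper's $\mu\neq 0$ case is handled by the clean observation $n_0|\mu|\asymp N|\mu|\gg N$, which forces $N\ll t\Delta^2 + QK^2/(\ell C)$ and hence $C\ll QK^2/(N\ell)$, excluded by the restriction to the complementary range of \eqref{small-c-range-initial}--\eqref{small-c-range}. Your integrality argument does not reproduce this dichotomy.

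In short, the derivative computation $\Phi'(n)\asymp t^{-1/3}\bigl(\alpha+\beta/\sqrt n\bigr)$ is correct, but the Poisson step needs to be performed modulo $c_1c_2$, the threshold must be $t^{1/3}/N$ rather than $1/N$, and the $\mu=0$ versus $\mu\neq 0$ dichotomy must replace the $n_*$ argument.
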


\begin{proof}
The weight functions in \eqref{omega-22} imply that 
\begin{align}
\label{h-c-rest}
\left|\frac{h_1}{c_1}-\frac{h_2}{c_2}\right|\ll \frac{t\Delta}{N}.
\end{align}
Applying the Poisson summation formula the $n$ sum transforms into
\begin{align}
\label{hi}
N\Delta\:\sum_{\substack{n\in\mathbb{Z}\\ h_1c_2-h_2c_1\equiv n\bmod{c_1c_2}}}\;\mathfrak{I}
\end{align}
where the integral is given by
\begin{align}
\mathfrak{I}= \int V(y) e\left(a_1\frac{N\Delta y}{n_0}+a_2\frac{N^2\Delta^2 y^2}{n_0^2}+\dots\right)\mathrm{d}y
\end{align}
with $n_0=tc_1/2\pi h_1$, $V^{(j)}(y)\ll (t^{1/3}\Delta)^j$,
$$
a_1=-\frac{\sqrt{n_0}}{4}\left(\frac{\sqrt{m_1}}{c_1}-\frac{\sqrt{m_2}}{c_2}\right)+n_0\frac{c_1h_2}{c_2h_1}\left(\frac{h_1}{c_1}-\frac{h_2}{c_2}\right)-\frac{nn_0}{c_1c_2}+O\left(\frac{QK^2}{C\ell t^{2/3}}\right)
$$
and
\begin{align*}
a_2=&\sqrt{n_0}\left(\frac{\sqrt{m_1}}{c_1}-\frac{\sqrt{m_2}}{c_2}\right)+\left(\frac{\sqrt{m_1n_0}}{2c_1}-\frac{h_1n_0}{c_1}\right)^2\left(\frac{t}{\pi}+\frac{\sqrt{m_1n_0}}{c_1}\right)^{-1}
\\
&-\left(\frac{\sqrt{m_2n_0}}{2c_2}-\frac{h_2n_0}{c_2}\right)^2\left(\frac{t}{\pi}+\frac{\sqrt{m_2n_0}}{c_2}\right)^{-1}+\text{smaller order terms},
\end{align*}
and so on. Using the congruence condition in \eqref{hi} we write
$$
n=h_1c_2-h_2c_1+\mu c_1c_2,
$$
and applying Lemma~\ref{lemma:small-int}, we get that the integral is negligibly small if 
\begin{align*}
\left|\frac{\sqrt{n_0}}{4}\left(\frac{\sqrt{m_1}}{c_1}-\frac{\sqrt{m_2}}{c_2}\right)+n_0\mu\right|\gg t\Delta^2.
\end{align*}
The case of $\mu\neq 0$ is easily ruled out as then we would need $C\ll QK^2/N\ell$, which can not happen due to \eqref{small-c-range-initial} and \eqref{small-c-range}, if we pick $Q$ as in \eqref{pick-q}.
Now for $\mu=0$ the integral is negligibly small due to the condition  \eqref{small-gap-condition}. The lemma follows.
\end{proof}

Note that in the range complementary to \eqref{small-gap-condition} we get an extra saving saving of $Q^{1/2}K/(C\ell)^{1/2}t^{1/2}\Delta$, at the price of loosing the restriction \eqref{m-res-size} on one of the $m_i$'s. So effectively we save $Q^{1/2}K/(C\ell)^{1/2}t^{2/3}\Delta^{3/2}$ which is not enough for our purpose, as the resulting bound is 
\begin{align}
\label{basic-bound-after-cauchy}
\sqrt{N}HQ^2Kt^{1/3}\;\frac{C^2t^{5/6}\Delta^{3/2}}{\sqrt{N}Q\ell}.
\end{align}
in place of \eqref{basic-bound-before-cauchy}. 

\bigskip

\section{Second application of Cauchy on dual off-diagonal}
\label{2nd-cauchy}

Now we consider \eqref{omega-22} with the restriction complementing  \eqref{small-gap-condition}, i.e. terms with `small gap'. This can be dominated by
\begin{align}
\label{omega-22-dominant}
\Theta_\ell :=\mathop{\sum\sum}_{m_1,m_2\sim \tilde{N}/\ell^2}\;&\Bigl|\sum_{n\sim N} \mathop{\sum\sum}_{h_1,h_2\sim tC/N}\mathop{\sum\sum}_{c_1,c_2\sim C} e\left(-\frac{nh_1}{c_1}+\frac{nh_2}{c_2}\right)\\
\nonumber \times  &e\left(\frac{m_2\bar{h}_2}{c_2}-\frac{m_1\bar{h}_1}{c_1}\right)
e\left(\frac{2\sqrt{m_1n}}{c_1}-\frac{2\sqrt{m_2n}}{c_2}+\frac{A_1^2}{B_1}-\frac{A_2^2}{B_2}\right)\\
\nonumber \times  &W\left(\frac{2nA_1}{HB_1}\right)W\left(\frac{2nA_2}{HB_2}\right)V\left(\frac{m_1c_2^2-m_2c_1^2}{N_0}\right)\Bigr|,
\end{align}
where $N_0=QK^2C^3t\Delta^2/N\ell$. The trivial bound for this sum is given by
\begin{align}
\label{trivial-bound-before-cauchy}
\left(\frac{\tilde{N}}{\ell^2}\frac{1}{t^{1/3}\Delta}\right)\;\left(\frac{\tilde{N}}{\ell^2}\frac{C\ell t\Delta^2}{QK^2}\right)\;N\Delta\;\frac{t^{2}C^2\Delta}{N^2}\;C^2\;\asymp \left(\frac{Q^{3/2}K^3 C^{5/2} t^{4/3}\Delta^{3/2}}{N^{3/2}\ell^{3/2}}\right)^2,
\end{align}
which when substituted for $\Omega_\ell$ in \eqref{ineq-omega} yields the bound \eqref{basic-bound-after-cauchy}.\\

\begin{Lemma}
\label{lemma:2nd-cauchy}
We have
$$
\Theta_\ell\ll \frac{\tilde{N}^2}{\ell^4}\;\Xi_\ell^{1/2}
$$
where
\begin{align}
\label{m1m2-cong}
\Xi_\ell=\mathop{\sum\sum}_{n,n'\sim N}\;\mathop{ \mathop{\sum\sum\sum\sum}_{h_1,h_1',h_2,h_2'\sim tC/N}\;\mathop{\sum\sum\sum\sum}_{c_1,c_1',c_2,c_2'\sim C}\;\mathop{\sum\sum}_{\substack{m_1,m_2\in\mathbb{Z}}}}_{\substack{\bar{h}_1c_1'-\bar{h}'_1c_1\equiv m_1\bmod{c_1c_1'}\\ \bar{h}_2c_2'-\bar{h}'_2c_2\equiv m_2\bmod{c_2c_2'}}}\;\mathfrak{I},
\end{align}
and 
\begin{align*}
\mathfrak{I}=&\iint e\left(F_1(y_1)-F_2(y_2)\right)W\left(\frac{2nA_1}{HB_1}\right)W\left(\frac{2n'A_1'}{HB_1'}\right)W\left(\frac{2nA_2}{HB_2}\right)W\left(\frac{2n'A_2'}{HB_2'}\right)\\
&\times W(y_1)W(y_2)V\left(\frac{y_1c_2^2-y_2c_1^2}{N_0\ell^2/\tilde{N}}\right)V\left(\frac{y_1c_2^{'2}-y_2c_1^{'2}}{N_0\ell^2/\tilde{N}}\right)\mathrm{d}y_1\mathrm{d}y_2,
\end{align*}
with 
$$
F_i(u)=\frac{2\sqrt{\tilde{N}un}}{c_i\ell}-\frac{2\sqrt{\tilde{N}un'}}{c_i'\ell}+\frac{A_i^2}{B_i}-\frac{A_i^{'2}}{B_i'}-\frac{\tilde{N}m_iu}{c_ic_i'\ell^2}.
$$
(Here in $A_i$ etc. $m_i$ is replaced by $\tilde{N}u/\ell^2$.)
\end{Lemma}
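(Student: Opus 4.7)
The plan is to handle $\Theta_\ell$ by a standard Cauchy--Poisson maneuver: pull the $(m_1,m_2)$-sum outside the absolute value with Cauchy--Schwarz, open the resulting square, and then apply Poisson summation to dualise both of the (now free) sums over $m_1$ and $m_2$. The $\Xi_\ell$ in the statement is exactly what the Poisson dual looks like, so the job is essentially to check that the arithmetic pieces collapse to the two congruence conditions stated, and that the Jacobians combine with the Cauchy prefactor to give $\tilde{N}^2/\ell^4$.

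First I would insert a smooth weight $W(m_1\ell^2/\tilde{N})W(m_2\ell^2/\tilde{N})$ (which is $\asymp 1$ on the support $m_i\sim \tilde{N}/\ell^2$) and apply Cauchy--Schwarz in the $(m_1,m_2)$ variables. Since each $m_i$ ranges over an interval of length $\asymp \tilde{N}/\ell^2$, this yields
\begin{align*}
\Theta_\ell^2 \ll \frac{\tilde{N}^2}{\ell^4}\,\sum_{m_1,m_2}W\!\left(\frac{m_1\ell^2}{\tilde{N}}\right)W\!\left(\frac{m_2\ell^2}{\tilde{N}}\right)|T(m_1,m_2)|^2,
\end{align*}
where $T(m_1,m_2)$ is the inner $(n,h_1,h_2,c_1,c_2)$-sum appearing in \eqref{omega-22-dominant}.

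Next I would open the absolute square, duplicating each variable to obtain the primed copies $(n',h_1',h_2',c_1',c_2')$. The $m_1$-dependence of the integrand then splits into a \emph{smooth} part -- the square-root exponentials $e(2\sqrt{m_1 n}/c_1-2\sqrt{m_1n'}/c_1')$, the stationary-phase contributions $e(A_1^2/B_1-A_1'^{\,2}/B_1')$, and the two cutoffs $V((m_1c_2^2-m_2c_1^2)/N_0)V((m_1c_2'^{\,2}-m_2c_1'^{\,2})/N_0)$ -- and an \emph{arithmetic} part $e(m_1\bar{h}_1'/c_1'-m_1\bar{h}_1/c_1)=e(-m_1(\bar{h}_1c_1'-\bar{h}_1'c_1)/(c_1c_1'))$ in the generic coprime case. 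I would now apply Poisson summation to the $m_1$-sum modulo $c_1c_1'$, and to the $m_2$-sum modulo $c_2c_2'$. In each case the character sum arising from the arithmetic phase collapses by orthogonality to the single congruence $\bar{h}_1c_1'-\bar{h}_1'c_1\equiv m_1\pmod{c_1c_1'}$ (respectively $\bar{h}_2c_2'-\bar{h}_2'c_2\equiv m_2\pmod{c_2c_2'}$) on the dual variable, which I relabel $m_i$. The remaining Fourier integral, after the change of variable $y_i=m_i\ell^2/\tilde{N}$, carries a Jacobian factor $\tilde{N}/\ell^2$, and the Poisson frequency $-m_i y/(c_ic_i')$ becomes the term $-\tilde{N}m_iy_i/(c_ic_i'\ell^2)$ sitting inside $F_i(y_i)$; all the $m_i$-independent pieces, including the four window functions $W(2nA_1/HB_1)$ etc., remain as spectators.

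Putting the two Jacobian factors of $\tilde{N}/\ell^2$ together with the prefactor $\tilde{N}^2/\ell^4$ from the Cauchy step gives exactly $\Theta_\ell^2\ll (\tilde{N}^4/\ell^8)\,\Xi_\ell$, and the claimed bound follows after a square root. The main piece of bookkeeping is verifying that the smooth factors survive Poisson intact and combine into the stated integral $\mathfrak{I}$; this is mechanical once the congruence structure is in place. The one mild nuisance is handling moduli $(c_i,c_i')$ with common factor (so that the phase must be written over the true denominator $c_ic_i'/(c_i,c_i')$), but these contribute in precisely the same shape, and can be absorbed into $\Xi_\ell$ or bounded as a lower-order term.
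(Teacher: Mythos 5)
Your proposal is correct and follows essentially the same route as the paper: Cauchy--Schwarz in $(m_1,m_2)$ to produce the factor $\tilde{N}^2/\ell^4$, opening the square to introduce primed copies of $(n,h_i,c_i)$, then Poisson summation on $m_1$ modulo $c_1c_1'$ and on $m_2$ modulo $c_2c_2'$, with the character sums collapsing to the two stated congruences and the rescaling $m_i\mapsto \tilde{N}u/\ell^2$ producing the Jacobians that give the claimed prefactor after taking a square root. Your side remark about non-coprime $(c_i,c_i')$ is actually a non-issue: since $\bar h_i$ is defined modulo $c_i$, the combination $\bar h_ic_i'-\bar h_i'c_i$ is well defined modulo $c_ic_i'$ even when $(c_i,c_i')>1$, so the congruence in $\Xi_\ell$ needs no adjustment.
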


\begin{proof}
We apply the Cauchy inequality (yet again) to \eqref{omega-22-dominant}, and then open the absolute square and apply the Poisson summation on $(m_1,m_2)$. The resulting character sums give rise to the congruence conditions, and the Fourier transform becomes $\mathfrak{I}$ after a change of variables. \end{proof}

Now we will study the integral in some detail. \\

\begin{Lemma}
\label{lemma:int-1}
The integral is negligibly small if
\begin{align}
\label{c-restriction}
c_1c_2'-c_1'c_2\gg C^2\:\frac{C\ell t\Delta^2}{QK^2}.
\end{align}
or if
\begin{align}
\label{m1-m2-rest}
\max\{m_1, m_2\}\gg t^\varepsilon \frac{NC\ell}{QK^2},\;\;\;\text{or}\;\;\;
m_1-m_2\gg  t^\varepsilon \frac{(C\ell)^2t\Delta^2}{\tilde{N}}.
\end{align}
\end{Lemma}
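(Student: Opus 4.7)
The plan is to apply Lemma~\ref{lemma:small-int} in three different directions in the $(y_1,y_2)$-plane, exploiting the fact that $F_1$ depends only on $y_1$ and $F_2$ only on $y_2$, while the two $V$-factors couple the two variables through the $c_i$. For the $c$-condition \eqref{c-restriction}, the two constraints $|y_1 c_2^2 - y_2 c_1^2|\ll N_0\ell^2/\tilde N$ and $|y_1 c_2^{'2} - y_2 c_1^{'2}|\ll N_0\ell^2/\tilde N$ together with $y_1,y_2\asymp 1$ from the $W$'s force
\[
y_1\bigl(c_1 c_2'-c_1'c_2\bigr)\bigl(c_1 c_2'+c_1'c_2\bigr)\ll N_0\ell^2 C^2/\tilde N.
\]
Since $c_1 c_2'+c_1'c_2\asymp C^2$ and $N_0\ell^2/\tilde N = C^3\ell t\Delta^2/QK^2$, the determinant $c_1 c_2'-c_1'c_2$ is forced to be $\ll C^3\ell t\Delta^2/QK^2$ (up to a $t^\varepsilon$-slack), so \eqref{c-restriction} yields an empty joint support for $\mathfrak{I}$.

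For the individual $m_i$-bound I apply the first-derivative test in $y_1$, the $y_2$-case being symmetric. A direct differentiation yields
\[
F_1'(y_1) = \frac{\sqrt{\tilde N}}{\ell\sqrt{y_1}}\left(\frac{\sqrt n}{c_1}-\frac{\sqrt{n'}}{c_1'}\right) - \frac{\tilde N m_1}{c_1 c_1'\ell^2} + O\!\left(\frac{QK^2}{C\ell\, t^{1/3}}\right),
\]
where the error comes from the $A_1^2/B_1$ and $A_1^{'2}/B_1'$ pieces, using $A_i/B_i \ll t^{-1/3}$. The first term is bounded by $QK^2/(C\ell)$ and the $m_1$-term equals $Q^2K^4 m_1/(NC^2\ell^2)$, so whenever $m_1\gg t^\varepsilon NC\ell/(QK^2)$ the $m_1$-contribution is $\gg t^\varepsilon QK^2/(C\ell)$ and dominates. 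Since $F_1''\asymp QK^2/(C\ell)$, the hypothesis of Lemma~\ref{lemma:small-int} holds with ample slack and the inner integral is negligibly small.

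For the $m_1-m_2$ threshold we may now assume $m_1,m_2\ll NC\ell/(QK^2)$. I integrate by parts along the direction $(c_1^2,c_2^2)$ in $(y_1,y_2)$-space, which preserves the first $V$-argument and therefore has tame weight-derivatives. The directional derivative of the phase is $c_1^2 F_1'(y_1) - c_2^2 F_2'(y_2)$; substituting $y_2 = y_1 c_2^2/c_1^2 + O(N_0\ell^2/\tilde N)$ one finds that the ``$\sqrt n$'' and ``$\sqrt{n'}$'' contributions cancel up to terms proportional to $\delta := c_1 c_2'-c_1'c_2$, while the ``$m$''-terms combine into $-\tilde N c_2(m_1-m_2)/(c_2'\ell^2)$ plus a $\delta$-tail of size $\tilde N m_1\delta/(C^2\ell^2)$. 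Using $\delta\ll C^3\ell t\Delta^2/QK^2$ together with the bound on $m_1$, every error term (including the one inherited from $A_i^2/B_i$) is $O(C^2 t\Delta^2)$, while the main term has size $\asymp\tilde N|m_1-m_2|/\ell^2$; under \eqref{m1-m2-rest} the main term exceeds the errors by a factor $t^\varepsilon$, and Lemma~\ref{lemma:small-int} delivers the desired negligibility. The principal technical hurdle lies in this final step: one has to carefully track every stationary-phase remainder from $A_i^2/B_i$ and the $\delta$-tails, and confirm that the second derivative in the chosen direction still satisfies the slack required by Lemma~\ref{lemma:small-int}; the first two cases, by contrast, are essentially formal.
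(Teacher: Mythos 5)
Your proposal is correct and follows essentially the same route as the paper: the paper derives \eqref{c-restriction} by comparing the supports of the two $V$-weights, treats $\max\{m_1,m_2\}$ by integration by parts in $y_1$, and then for $m_1-m_2$ changes variables from $(y_1,y_2)$ to $(y_1,w)$ with $w$ proportional to the first $V$-argument before integrating by parts in $y_1$ — this is the same thing as your directional integration by parts along $(c_1^2,c_2^2)$, since that direction annihilates the first $V$-argument and moves along the line $y_2=\alpha y_1+\mathrm{const}$. The explicit cancellation you record (the $\sqrt{n}$-piece vanishing exactly, the $\sqrt{n'}$-piece and the $m_1\delta/(c_1'c_2')$-piece both of size $O(C^2t\Delta^2)$ under the complement of \eqref{c-restriction}, versus a main term $\asymp\tilde N|m_1-m_2|/\ell^2$) matches the paper's threshold $m_1c_1c_2'-m_2c_2c_1'\gg C^4\ell^2t\Delta^2/\tilde N$ reducing to the second half of \eqref{m1-m2-rest}.
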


\begin{proof}
The condition \eqref{c-restriction} follows by considering the last two weights in the integral. Also by repeated integration by parts we see that the integral is negligibly small if
\begin{align*}
\max\{m_1, m_2\}\gg t^\varepsilon \frac{NC\ell}{QK^2}.
\end{align*}
Now we set $w=(y_1c_2^2-y_2c_1^2)\tilde{N}/N_0\ell^2$, $\alpha=c_2^2/c_1^2$ and $\Theta=C^3\ell t\Delta^2/c_1^2QK^2$. Then substituting for $y_2$, and using the Taylor expansion, we arrive at the expression 
\begin{align}
\label{int-taylor}
\mathfrak{I}=\frac{C\ell t\Delta^2}{QK^2}\iint e\left(F_1(y_1)-F_2(\alpha y_1)+\Theta wF_2'(\alpha y_1)-\dots\right)U\left(y_1,w\right)\mathrm{d}y_1\mathrm{d}w,
\end{align}
where the weight function satisfies $U^{(j_1,j_2)}\ll (t^{1/3}\Delta)^{j_1+j_2}$.
 Now by repeated integration by parts with respect to the $y_1$ we see that the integral is negligibly small if
\begin{align*}
m_1c_1c_2'-m_2c_2c_1'\gg \frac{C^4\ell^2 t\Delta^2}{\tilde{N}}.
\end{align*}
This condition reduces to the second condition in \eqref{m1-m2-rest} because of \eqref{c-restriction}. 
\end{proof}

We can say more about the size of the integral if $(m_1,m_2)\neq (0,0)$. Let us assume that $m_2\neq 0$.\\

\begin{Lemma}
\label{lemma:int-2}
Suppose $m_2\neq 0$ then we have
\begin{align}
\label{J-int}
\mathfrak{I}\ll  \frac{1}{|m_2|}\:\frac{(C\ell)^{5/2} Nt^{2/3}\Delta}{(QK^2)^{5/2}}.
\end{align}
\end{Lemma}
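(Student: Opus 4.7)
The plan is to exploit the fact that $m_2$ enters $F_2(y_2)$ only linearly, through the term $-\tilde N m_2 y_2/(c_2c_2'\ell^2)$, and to extract a $1/|m_2|$ factor by integrating by parts in that direction. The starting point is the representation of $\mathfrak{I}$ developed inside the proof of Lemma~\ref{lemma:int-1}, namely
$$\mathfrak{I}=\frac{C\ell t\Delta^2}{QK^2}\iint e\Bigl(F_1(y_1)-F_2(\alpha y_1)+\Theta w F_2'(\alpha y_1)+\tfrac{1}{2}\Theta^2 w^2 F_2''(\alpha y_1)+\cdots\Bigr)U(y_1,w)\,\mathrm{d}y_1\,\mathrm{d}w,$$
with $U^{(j_1,j_2)}\ll (t^{1/3}\Delta)^{j_1+j_2}$, in which the $m_2$-contribution to $F_2'$ is exactly $-\tilde N m_2/(c_2 c_2'\ell^2)$.

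The first step is a stationary-phase expansion in the $w$-variable. A direct differentiation of $F_2(u)$ shows that $|F_2''|\asymp QK^2/(C\ell)$, coming from the dominant square-root pieces $2\sqrt{\tilde N u n}/(c_i\ell)$, so the $w$-phase is a nondegenerate quadratic. Stationary phase in $w$ then produces a factor of order $\sqrt{QK^2/(C\ell)}/(t\Delta^2)$ and leaves an oscillatory integral in $y_1$ with an effective phase $\Phi(y_1)$ and a weight that has the same $(t^{1/3}\Delta)$-smoothness scale as $U$. The ranges on $m_i$ established in Lemma~\ref{lemma:int-1} guarantee that the $w$-stationary point lies in the effective support.

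The second step is integration by parts in $y_1$, with Lemma~\ref{lemma:small-int} as the tool. After substitution of $w^{\ast}$ the phase $\Phi(y_1)$ still carries the linear $m_2$-contribution through the chain rule $y_2\approx\alpha y_1$, giving $|\Phi'(y_1)|\gg R:=\tilde N|m_2|/(C^2\ell^2)$ generically. One application of Lemma~\ref{lemma:small-int} with this $R$ and the weight smoothness scale $1/(t^{1/3}\Delta)$ then gains the factor $(t^{1/3}\Delta)/R$, and combining the three factors—the overall prefactor $C\ell t\Delta^2/(QK^2)$, the stationary-phase gain in $w$, and the IBP gain in $y_1$—delivers the stated bound for $\mathfrak{I}$.

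The main obstacle is to verify that the non-$m_2$ contributions to $\Phi'(y_1)$ (namely the derivatives of the remaining square-root pieces of $F_2$ and of $A_i^2/B_i-A_i'^2/B_i'$) do not cancel the $m_2$-contribution below the IBP threshold. This should follow from the weight constraints $A_i\asymp A_i'\asymp t^{2/3}$ forced by the $W$-factors, which pin the non-$m_2$ gradients to be either of a geometrically different shape or of smaller magnitude than $\tilde N|m_2|/(C^2\ell^2)$. A secondary technical point is to check that the higher-order Taylor corrections omitted after Lemma~\ref{lemma:int-1} remain negligible across the $(Q,K,C,\ell)$-range that survives the earlier reductions; this follows from the size bound $E=N\Delta_0/(QC\ell)$ already recorded in Section~\ref{off-diag-sec}.
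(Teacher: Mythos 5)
Your route (stationary phase in $w$ followed by an integration-by-parts gain in $y_1$) is genuinely different from the paper's, and it does not actually close. The paper does \emph{not} perform a stationary phase expansion of the $w$-integral: for fixed $y_1$ it applies Lemma~\ref{lemma:small-int} only as a \emph{negligibility criterion}, concluding that the $w$-integral is $O(t^{-A})$ unless $|F_2'(\alpha y_1)|\ll \sqrt{QK^2/C\ell}+QK^2/(C\ell t^{2/3}\Delta)$; since the linear $m_2$-term $-\tilde N m_2/(c_2c_2'\ell^2)$ offsets $F_2'$, this localises $y_1$ to an interval of length $\ll t^\varepsilon |m_2|^{-1}(C\ell)^{3/2}N(QK^2)^{-3/2}$, and the lemma then follows by the completely \emph{trivial} bound $\ll 1/(t^{1/3}\Delta)$ for the $w$-integral (the effective $w$-support dictated by the weights), times the prefactor $C\ell t\Delta^2/(QK^2)$, times the $y_1$-interval length.

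Three concrete gaps remain in what you wrote. First, if you multiply out your three factors — $\frac{C\ell t\Delta^2}{QK^2}\cdot \frac{\sqrt{QK^2/C\ell}}{t\Delta^2}\cdot \frac{t^{1/3}\Delta C^2\ell^2}{\tilde N |m_2|}$ — you get $\frac{1}{|m_2|}\frac{(C\ell)^{5/2}Nt^{1/3}\Delta}{(QK^2)^{5/2}}$, which is a factor $t^{1/3}$ \emph{smaller} than the bound in the statement. A result that is an unexplained power of $t$ stronger than the target is a warning sign, not a bonus; it signals that one of the two ``gains'' is not fully earned. Second, the stationary-phase step presumes $|F_2''|\asymp QK^2/(C\ell)$, but the two square-root terms in $F_2''$, namely $-\tfrac12\sqrt{\tilde N n}/(c_2\ell u^{3/2})$ and $+\tfrac12\sqrt{\tilde N n'}/(c_2'\ell u^{3/2})$, are each of that size with opposite signs and may cancel substantially (the relation displayed immediately after this lemma in the paper, $\sqrt n/c_2-\sqrt{n'}/c_2'\ll M_2QK^2/(C^2\ell\sqrt N)+\sqrt N/(Ct^{1/3})$, shows precisely that such cancellation does occur); no nondegeneracy lower bound is established. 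Third, Lemma~\ref{lemma:small-int} is a negligibility criterion, not a device that ``gains the factor $(t^{1/3}\Delta)/R$''. Taking $A=1$ in that lemma gives the pair $(QR/\sqrt Y)^{-1}+(RU)^{-1}$, and you keep only the second term; moreover, the hypothesis $|\Phi'|\geq R$ fails near the stationary point $y_1=\star$, so the bound cannot be applied across the whole $y_1$-range without first excising a neighbourhood of $\star$ — which is exactly the localisation step the paper performs and then estimates trivially. To repair your argument you would need to prove a lower bound for $F_2''$ in the relevant range, verify that $(QR/\sqrt Y)^{-1}$ is dominated, and handle the stationary point separately; the paper sidesteps all three by never doing stationary phase in $w$ and never doing IBP in $y_1$.
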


\begin{proof}
Consider the form of the integral given in \eqref{int-taylor}.
Given $y_1$, look at the integral over $w$, which turns out to be negligibly small due to Lemma~\ref{lemma:small-int} if
$$
\Theta |F_2'(\alpha y_1)|\gg \Theta \sqrt{\frac{QK^2}{C\ell}}+t^{1/3}\Delta.
$$
This boils down to the condition
\begin{align}
\label{star}
|y_1-\star|\gg t^\varepsilon\frac{1}{|m_2|}\:\frac{C\ell N}{QK^2}\:\left(\frac{1}{t^{2/3}\Delta}+\sqrt{\frac{C\ell}{QK^2}}\right)\asymp t^\varepsilon\frac{1}{|m_2|}\:\frac{(C\ell)^{3/2} N}{(QK^2)^{3/2}},
\end{align}
for some $\star$.
In generic case this cuts down the length of the $y_1$ integral by $K$ which will be taken to be slightly smaller than $t^{1/3}$. More precisely we get
\begin{align*}
\mathfrak{I}\ll \frac{1}{t^{1/3}\Delta}\;\frac{C\ell t\Delta^2}{QK^2}\:\frac{1}{|m_2|}\:\frac{(C\ell)^{3/2} N}{(QK^2)^{3/2}}.
\end{align*}
Note that the extra saving of $t^{1/3}\Delta$ comes by considering the weights in the $w$ integral. 
\end{proof}

The explicit expression for $\star$ in \eqref{star} is not required in our analysis. We just note that it follows from the fact $\star \sim 1$ that we have
\begin{align*}
\frac{\sqrt{n}}{c_2}-\frac{\sqrt{n'}}{c_2'}\ll \frac{M_2QK^2}{C^2\ell \sqrt{N}}+\frac{\sqrt{N}}{Ct^{1/3}}.
\end{align*}
For $M_2$ of the generic size, i.e. $NC\ell/QK^2$, the condition does not impose any restriction. But for $M_2$ smaller it squeezes the variables together. Indeed combining with the condition \eqref{h} it follows that  
\begin{align}
\label{hc-distance}
h_2c_2-h_2'c_2'\ll \frac{tC^2}{N}\left(\frac{M_2QK^2}{NC\ell}+\Delta\right).
\end{align}
Furthermore in case $QK^2/C\ell t\gg 1/t^{1/3}$, we observe that the weights involved in the integral imply that 
\begin{align*}
\left|\frac{D_i^2c_i^2}{n}-\frac{D_i^{'2}c_i^{'2}}{n'}\right|\ll  \frac{\tilde{N}}{\ell^2}\:\frac{1}{t^{1/3}\Delta},
\end{align*}
which in turn boils down to saving $t^{1/3}\Delta$ in the count for $(n,n')$.
\\

\section{The zero frequency}

Now our strategy is to break the sum \eqref{m1m2-cong} into blocks according to the sizes of $m_i$. We will estimate the size of each such block and thus get an estimate for its contribution to $\Theta_\ell$ (as given in Lemma~\ref{lemma:2nd-cauchy}). We will say that our estimate is `satisfactory' if its contribution to \eqref{ineq-omega} is bounded by
$$
O(\sqrt{N}HKQ^2 t^{1/3-\delta}).
$$
We begin by considering the contribution of the zero frequency $(m_1,m_2)=(0,0)$. \\

\begin{Lemma}
\label{lemma:zero}
Let $Q$ be as given in \eqref{pick-q}. The contribution of $(m_1,m_2)=(0,0)$ to the dual off-diagonal is satisfactory if 
\begin{align}
\label{upper-bound-on-K}
K\ll \min\{t^{1/3-2\delta},Nt^{-1/3-2\delta}\},
\end{align} 
and $\delta<1/12$.
\end{Lemma}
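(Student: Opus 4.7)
The plan is to exploit the rigidity of the congruences in \eqref{m1m2-cong} when $(m_1,m_2)=(0,0)$. Reducing $\bar h_i c_i'-\bar h_i'c_i\equiv 0\pmod{c_ic_i'}$ modulo $c_i$ and modulo $c_i'$ separately, and using $(h_i,c_i)=(h_i',c_i')=1$, one forces $c_i\mid c_i'$ and $c_i'\mid c_i$, hence $c_i=c_i'$. The residual condition then becomes $h_i\equiv h_i'\pmod{c_i}$, so writing $h_i'=h_i+c_ik_i$ confines $k_i$ to an interval of length $\ll \max(1,t/N)$. This collapse is the principal source of saving over the trivial count.

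With $c_i=c_i'$, the two $V$-factors in $\mathfrak I$ merge into the single constraint $y_1c_2^2-y_2c_1^2\ll N_0\ell^2/\tilde N$, while the four $W$-factors pair up to localise each $y_i$ (through $A_i,A_i'\asymp t^{2/3}$) in a window of length $\ll \ell Ct^{2/3}/(QK^2)$. Multiplying the two widths yields the trivial bound $|\mathfrak I|\ll t^{-1/3}$ in the generic regime $\Delta=QK^2/(C\ell t)$.

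Next I would count tuples. Beyond the $C^2$ choices for $(c_1,c_2)$, the restriction \eqref{h} combined with the requirement that the two $n$-windows (from $i=1,2$) intersect produces $|c_1h_2-c_2h_1|\ll \Delta tC^2/N$, cutting $(h_1,h_2)$ to $\ll \Delta t^2C^2/N^2$ tuples; the primed analogue yields $|k_1-k_2|\ll \Delta t/N$, limiting the $(k_1,k_2)$-count; and $(n,n')$ contribute $(N\Delta)^2$ from the weight-induced windows on both sides. Multiplying the count by $|\mathfrak I|$ gives a bound for $\Xi_\ell^{(0,0)}$; feeding this through $\Theta_\ell\ll(\tilde N^2/\ell^4)\Xi_\ell^{1/2}$ and then into \eqref{ineq-omega}, with the choices \eqref{pick-q} for $Q$ and \eqref{upper-bound-on-K} for $K$, should produce the target $\sqrt N HKQ^2t^{1/3-\delta}$ provided $\delta<1/12$.

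The hard part will be balancing these estimates across the two regimes of $K$ in \eqref{upper-bound-on-K}: for $N\gtrsim t^{2/3}$ (where $K=t^{1/3-2\delta}$) the factor $\max(1,t/N)=t/N$ in the $k$-count is the dominant loss, whereas for $N\lesssim t^{2/3}$ (where $K=Nt^{-1/3-2\delta}$) the weight-localised $(n,n')$-count takes over. The condition $\delta<1/12$ is precisely what allows the $c_i=c_i'$ saving to dominate in both regimes. The degenerate range $\Delta=t^{-1/3}$ will likely need a separate but analogous treatment, since there both the integral bound and the windowing change form.
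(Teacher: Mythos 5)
The initial reduction is right: for $(m_1,m_2)=(0,0)$ the congruences $\bar h_ic_i'-\bar h_i'c_i\equiv 0\pmod{c_ic_i'}$ do force $c_i=c_i'$ and $h_i\equiv h_i'\pmod{c_i}$, and in the generic regime the integral bound $|\mathfrak I|\ll t^{-1/3}$ matches the paper's $\frac{1}{t^{1/3}\Delta}\cdot\frac{C\ell t\Delta^2}{QK^2}$. However, the counting step has a genuine gap: you never extract the oscillation that $\mathfrak I$ still carries after the reductions. The paper's proof performs repeated integration by parts in $y_1$ (the phase $F_1-F_2$ still oscillates through the $\sqrt{\tilde N u n}-\sqrt{\tilde N u n'}$ terms), concluding that $\mathfrak I$ is negligible unless $|n-n'|\ll N t^\varepsilon/t^{1/3}$. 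Since $\Delta\geq t^{-1/3}$, this is a genuinely shorter window than the $N\Delta$-window you get from the $W$-weights, so the paper counts $(n,n')$ by $N^2\Delta/t^{1/3}$, not by $(N\Delta)^2$ as you do. More importantly, combining $|n-n'|\ll N/t^{1/3}$ with $|n-tc_i/(2\pi h_i)|\ll N\Delta$ and its primed analogue forces $|h_i-h_i'|\ll tC\Delta/N$ for \emph{each} $i$ individually, i.e.\ $g_i\ll 1+t\Delta/N=O(1+t^{2/3}/N)$. Your version only gives the trivial $k_i\ll t/N$ for each $i$ plus the weaker cross-condition $|k_1-k_2|\ll t\Delta/N$, which is larger by a factor of order $t/(N\Delta)\cdot(1+t\Delta/N)^{-1}$.

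There is a second loss: you constrain $(h_1,h_2,c_1,c_2)$ via \eqref{h-c-rest}, $|h_1/c_1-h_2/c_2|\ll t\Delta/N$, but the paper shows that in the small-gap range \eqref{small-gap-condition}, together with the localization \eqref{m-res-size}, one in fact has the stronger restriction \eqref{h-c-strong}, $|h_1/c_1-h_2/c_2|\ll t^{2/3}/N+t\Delta^2/N$. This sharpens the $(h,c)$ count by roughly a factor of $1/\Delta\geq t^{1/3}$. Taken together, your count for $\Xi_\ell$ exceeds the paper's by polynomial factors in $t$, and the proposal never verifies that the resulting bound, fed through $\Theta_\ell\ll(\tilde N^2/\ell^4)\Xi_\ell^{1/2}$ and then into \eqref{ineq-omega}, still lands below $\sqrt N HKQ^2 t^{1/3-\delta}$ for $\delta<1/12$. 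In the most delicate range $N\asymp t^{2/3}$ (which is in play since $K=Nt^{-1/3-2\delta}$ there) the gap is on the order of $t^{1/3}$ in $\Xi_\ell$, which fails by a wide margin. The missing ideas are precisely: (i) integration by parts in the inner $y_1$-integral to force $|n-n'|\ll N/t^{1/3}$; (ii) propagating this through the $A_i$-weights to bound $h_i-h_i'$ individually; (iii) using \eqref{h-c-strong} rather than \eqref{h-c-rest}.
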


\begin{proof}
For $m_1=m_2=0$ the congruence conditions in \eqref{m1m2-cong} imply that $c_1=c_1'$ and $c_2=c_2'$. Also $h_i\equiv h_i'\bmod{c_i}$, so that $h_i-h_i'=c_ig_i$ for some integer $g_i$. It follows by repeated integration by parts w.r.t. $y_1$, that the integral $\mathfrak{I}$ is negligibly small 
in the range 
$$
n-n'\gg Nt^\varepsilon/t^{1/3}.
$$
Combining with the existing restriction \eqref{h}, namely $|n-tc_i/2\pi h_i|\ll N\Delta$ it follows that 
$$
h_i-h_i'\ll \frac{tC}{N}\Delta.
$$
So the number of $g_i$ is given by $O(1+t\Delta/N)=O(1+t^{2/3}/N)$, where for the last inequality we use the fact that $QK^2\ll C\ell N$ (which follows as $C$ is in the complementary range to \eqref{small-c-range-initial}, and $Q$ is picked according to \eqref{pick-q}). Finally we observe that \eqref{small-gap-condition} together with \eqref{m-res-size} imply the stronger restriction
\begin{align}
\label{h-c-strong}
\left|\frac{h_1}{c_1}-\frac{h_2}{c_2}\right|\ll \frac{t^{2/3}}{N}+\frac{t\Delta^2}{N}
\end{align}
in place of \eqref{h-c-rest}. It follows that the number of $(h_1,h_2,c_1,c_2)$ is bounded by 
$$
\frac{tC^2}{N}\:\left(1+\frac{C^2t^{2/3}}{N}+\frac{C^2t\Delta^2}{N}\right).
$$
Hence, using the bound 
$$
\mathfrak{I}\ll \frac{1}{t^{1/3}\Delta}\;\frac{C\ell t\Delta^2}{QK^2},
$$ 
we get that the contribution of the zero frequency to \eqref{omega-22-dominant} is bounded by
\begin{align*}
\frac{\tilde{N}^2}{\ell^4}\:\left\{\frac{C\ell t^{2/3}\Delta}{QK^2}\:\frac{N^2\Delta}{t^{1/3}}\:\left(1+\frac{t^{2/3}}{N}\right)^2 \;\frac{tC^2}{N}\:\left(1+\frac{C^2t^{2/3}}{N}+\frac{C^2t\Delta^2}{N}\right)\right\}^{1/2}.
\end{align*}
Substituting this bound for $\Omega_\ell$ in \eqref{ineq-omega}, trivially summing over $\ell$ and using the inequality $C\ell \Delta\ll Qt^{-1/3}$ (as $K\ll t^{1/3}$ by choice), it follows that the overall contribution of the zero frequency to the dual off-diagonal is given by
\begin{align}
\label{basic-bound-after-cauchy-0-freq}
\sqrt{N}HQ^2Kt^{1/3}\;\frac{K^{1/2}N^{1/4}}{(CQ)^{1/4}t^{1/3}}\;\left(1+\frac{t^{1/3}}{\sqrt{N}}\right) \;\left(1+\frac{C^{1/2}t^{1/6}}{N^{1/4}}+\frac{Q^{1/2}t^{1/12}}{N^{1/4}}\right).
\end{align}
The main term in this contribution is 
\begin{align*}
\sqrt{N}HQ^2Kt^{1/3}\;\frac{K^{1/2}N^{1/4}}{(CQ)^{1/4}t^{1/3}}\;\left(1+\frac{t^{1/3}}{\sqrt{N}}\right) \;\frac{C^{1/2}t^{1/6}}{N^{1/4}},
\end{align*}
which is satisfactory under the condition given in \eqref{upper-bound-on-K} (recall that $C\ll Q$).
For the other terms we use the fact that $C$ is in the complementary range to \eqref{small-c-range-initial} (resp. \eqref{small-c-range}) for $N\ll t^{2/3}$ (resp. $N\gg t^{2/3}$). The overall contribution of these terms is satisfactory if we pick $Q$ according to \eqref{pick-q}, and take $\delta<1/12$.
\end{proof}

\begin{remark}
We will pick 
\begin{align}
\label{pick-k}
K=\begin{cases}
t^{1/3-2\delta} &\text{if $t^{2/3}<N\ll t^{1+\varepsilon}$}\\
Nt^{-1/3-2\delta} &\text{if $t^{2/3-2\delta}<N\leq t^{2/3}$.}
\end{cases}
\end{align}
\end{remark}

\bigskip

\section{The final counting problem}
\label{counting-problem}

Now we will analyze the contribution of the non-zero frequencies $(m_1,m_2)\neq (0,0)$. We have reduced the problem to counting the number of 
\begin{align}
h_i, h_i'\sim tC/N,\;\;\; c_i, c_i'\sim C,\;\;\;m_i\sim M_i\ll C\ell N/QK^2,\;\;\;i=1,2,
\end{align}
satisfying the following constraints  
\begin{align}
\label{counting-problem-final}
&\left|\frac{h_1}{c_1}-\frac{h_2}{c_2}\right|,\;\left|\frac{h_1'}{c_1'}-\frac{h_2'}{c_2'}\right|\ll \frac{t\Delta}{N},\;\;\;
\left|\frac{c_1'}{c_1}-\frac{c_2'}{c_2}\right|\ll \frac{C\ell t\Delta^2}{QK^2}\\
\nonumber & \bar{h}_1c_1'-\bar{h}'_1c_1\equiv m_1\bmod{c_1c_1'},\;\;\;\bar{h}_2c_2'-\bar{h}'_2c_2\equiv m_2\bmod{c_2c_2'},
\end{align}
with $C\ell\ll Q$ and $m_1-m_2\ll (C\ell)^2t\Delta^2/\tilde{N}$. We set $\mathbf{v}_i=(h_i,h_i',c_i,c_i')$. Let 
$$
N(C,M_1,M_2)=\#\{\mathbf{v}_1,\mathbf{v}_2,m_1,m_2: \:\text{satisfying \eqref{counting-problem-final}} \}.
$$ 
Suppose $m_2\neq 0$, then the contribution of the $m_2\sim M_2$ block to \eqref{omega-22-dominant} is bounded by
\begin{align}
\label{omega-22-dominant-new}
 &\frac{\tilde{N}^2}{\ell^4}\;\left[\frac{1}{M_2}\:\frac{(C\ell)^{5/2} Nt^{2/3}\Delta}{(QK^2)^{5/2}}\:\frac{(N\Delta)^2}{t^{1/3}\Delta}\:N(C,M_1,M_2)\right]^{1/2},
 \end{align}
 as we have $(N\Delta)^2/t^{1/3}\Delta$ many pairs $(n,n')$. Let $(c_i,c_i')=\delta_i$, for $i=1,2$, and let $(c_1/\delta_1,c_2/\delta_2)=\delta_3$. We write $c_i=\delta_i\delta_3 d_i$ and $c_i'=\delta_i d_i'$ for $i=1,2$. It follows that  $\delta_i|m_i$ and we write $m_i=\delta_i\mu_i$ for $i=1,2$. Also it follows that $\delta_1\delta_2\delta_3|(c_1c_2'-c_1'c_2)$, and we set $(c_1c_2'-c_1'c_2)=\delta_1\delta_2\delta_3u$ with $u\ll C^3\ell t\Delta^2/\delta_1\delta_2\delta_3QK^2$. Let $N_0$ (resp. $N_{\neq 0}$) denote the contribution of $u=0$ (resp. $u\neq 0$) and  $\delta_i\sim \Delta_i$ to $N(C,M_1,M_2)$.\\

\begin{Lemma}
We have
\begin{align*}
    N_0\ll M_2\frac{t^{2}C^2}{N^2}\: \left(1+\frac{t^{1/3}N}{\Delta_1K^4}\right)\left(1+\frac{t\Delta\Delta_1}{N}\right)\left(1+\frac{t\Delta}{N}\right).
\end{align*}
\end{Lemma}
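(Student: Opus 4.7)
My plan is to obtain the bound by a careful lattice-point count, combining the parametrization of the $u=0$ locus with the proximity constraints \eqref{h}, \eqref{h-c-strong} and the congruences for $m_1, m_2$. First, I would parametrize the $u = 0$ locus. Writing $c_i = \delta_i\delta_3 d_i$ and $c_i' = \delta_i d_i'$ with $(d_1, d_2) = 1$ as given in the setup, the relation $u = \delta_1\delta_2\delta_3(d_1 d_2' - d_1' d_2) = 0$ combined with $(d_1, d_2) = 1$ forces $d_i' = r d_i$ for a single integer $r$; the size constraint $c_i' \sim C \sim c_i$ gives $r \sim \delta_3$. After fixing dyadic scales $\delta_2 \sim \Delta_2$, $\delta_3 \sim \Delta_3$, $r \sim \Delta_3$ at cost $t^\varepsilon$, the four $c$-variables are parametrized by $(d_1, d_2)$ with $d_i \sim C/(\Delta_i\Delta_3)$ and $(d_1, d_2) = 1$.

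Next, I would count the remaining variables using the proximity and gap constraints. By \eqref{h}, each $h_i/c_i$ is pinned near $t/(2\pi n)$ and each $h_i'/c_i'$ near $t/(2\pi n')$ for some $n, n' \sim N$, while the strong gap bound — obtained from \eqref{small-gap-condition} and \eqref{m-res-size} — forces $|h_1/c_1 - h_2/c_2|, \, |h_1'/c_1' - h_2'/c_2'| \ll t^{2/3}/N + t\Delta^2/N$. Treating $(h_1, h_1')$ as independent variables of size $tC/N$ produces the main factor $(tC/N)^2$. For each fixed $(h_1, h_1')$, the compatible $c_1$ (which must lie in $\delta_1 \mathbb{Z}$ and satisfy $c_1 \approx 2\pi n h_1/t$) occupies a short interval, contributing the factor $(1 + t\Delta\Delta_1/N)$; and the gap condition restricts $h_2$ to an interval of length $\ll C(t^{2/3}/N + t\Delta^2/N)$, contributing the factor $(1 + t\Delta/N)$.

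Finally, I would use the congruences $\bar h_i c_i' - \bar h_i' c_i \equiv m_i \pmod{c_i c_i'}$: after substituting the parametrization and clearing the common factor $\delta_i d_i$ (which must divide $m_i$), these reduce to congruences modulo $\delta_i \delta_3 r d_i$, determining $m_i$ in its residue class. The sum over $m_2 \sim M_2$ is estimated trivially, producing the prefactor $M_2$. For $m_1$, the constraint $|m_1 - m_2| \ll (C\ell)^2 t\Delta^2/\tilde N$ restricts $m_1$ to an interval of size $\sim M_2$, within which the congruence yields the factor $(1 + t^{1/3}N/(\Delta_1 K^4))$ after substituting $\tilde N \asymp Q^2 K^4/N$ and $\delta_1 \sim \Delta_1$.

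The main obstacle is to obtain all three correction factors simultaneously and with the exact $\Delta_1$-dependence claimed. Notice that $\Delta_1$ enters two separate correction factors — via the interval length for $c_1$ and via the modulus in the $m_1$-congruence — so the order of counting has to be chosen carefully to avoid double-counting, and each of the available constraints (proximity to $t/(2\pi n)$, $\delta_i$-divisibility of $c_i$, and the two congruences) has to be exploited optimally.
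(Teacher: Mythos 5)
The proposal reaches the claimed product of factors, but it does so by a different bookkeeping than the paper and, as stated, the count is incomplete and has a couple of steps that do not hold up.

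First, the accounting of variables is not closed. After the dyadic scales $\Delta_1,\Delta_2,\Delta_3$ are fixed, the $u=0$ quadruples $(c_1,c_2,c_1',c_2')$ are still parametrized by free integers (in your notation $d_1,d_2,r$, or in the paper's by $d_1'$ and $d_2$). You count $(h_1,h_1')$, then "$c_1$", then $h_2$, then $m_1,m_2$, and stop. Nothing is said about $c_2,c_1',c_2',h_2'$, and the implicit assumption that they cost $O(1)$ once the other variables are fixed is false. In the paper these variables are the source of the $C^2$ in the leading term: the sums over $d_1'\sim C/\Delta_1$ and $d_2\sim C/\Delta_2\Delta_3$ (together with the $\delta_i$-sums and the gcd factors $(\delta_1,d_1')$, $(\delta_2,d_2\delta_3)$) produce exactly $C^2 t^\varepsilon$. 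In your scheme the $C^2$ is instead attributed to the free choice of $(h_1,h_1')$, which is fine, but then the $c$-variables and $h_2'$ still have to be paid for, and they are not.

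Second, the individual factors you quote are not justified by the constraints you invoke. The $c_1$-count "must satisfy $c_1\approx 2\pi n h_1/t$" refers to a variable $n$ which does not appear in $N(C,M_1,M_2)$: the $(n,n')$-count has already been extracted as $(N\Delta)^2/t^{1/3}\Delta$ in \eqref{omega-22-dominant-new}, and the constraints on the $\mathbf v_i$ are purely \eqref{counting-problem-final}. Likewise, $|h_1/c_1-h_2/c_2|\ll t\Delta/N$ restricts $h_2$ to an interval of length $\asymp Ct\Delta/N$, not $t\Delta/N$; the same is true if you use the strengthened bound \eqref{h-c-strong}. So the naive count of $h_2$ is $1+Ct\Delta/N$, and getting down to $1+t\Delta/N$ (as in the lemma) requires the $m_1$-congruence, not just proximity.

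Third, and most important, the congruences $\bar h_i c_i'-\bar h_i' c_i\equiv m_i\pmod{c_ic_i'}$ are used in the paper to cut down the \emph{$h$-count}, not the $m$-count. The $m_2$-congruence determines $(h_2,h_2')$ up to $O(t^2\delta_2(\delta_2,d_2\delta_3)/N^2)$ pairs, and the $m_1$-congruence reduces the offsets $g,g'$ (of $h_1,h_1'$ around $[h_2c_1/c_2]$, $[h_2'c_1'/c_2']$) from $\ll 1+Ct\Delta/N$ each to $(\delta_1,d_1')(1+\delta_1 t\Delta/N)(1+t\Delta/N)$. Your proposal uses the congruences only to locate $m_i$ in a residue class, and the resulting saving (roughly $1+M_i\Delta_i/C^2$) is not the same as, nor does it reproduce, the factor $(1+t^{1/3}N/\Delta_1K^4)$. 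In the paper that factor has nothing to do with the congruence; it is the count of $\mu_1\sim M_1/\Delta_1$ with $\delta_1\mu_1-\delta_2\mu_2\ll (C\ell)^2t\Delta^2/\tilde N$, i.e.\ the interval constraint combined with $\delta_1\mid m_1$, simplified via $C\ell\Delta\ll Qt^{-1/3}$.

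Taken together: if one follows your ordering literally — $(h_1,h_1')$ free $=(tC/N)^2$, then $c$-quadruples $\asymp C^2$, then $(h_2,h_2')$ by proximity $\ll(1+Ct\Delta/N)^2$, then $m_i$ by congruence — the total exceeds the stated bound by a factor of order $C^2$ (and more). The product of factors you list coincides with the lemma, but that coincidence hides the fact that the essential savings from the two congruences, and the remaining $c$- and $h$-variables, have not actually been accounted for. To repair this you would need to follow the paper's route: count $(h_2,h_2')$ against the $m_2$-congruence, expand $h_1,h_1'$ around $[h_2c_1/c_2],[h_2'c_1'/c_2']$ and count the small offsets against the $m_1$-congruence, and only then sum the $\mu$-variables and the $c$-parametrization.
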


\begin{proof}
For $u=0$ we get $d_1=d_2$ and $d_1'=d_2'$. Next $h_2$ and $h_2'$ are determined using the last condition in \eqref{counting-problem-final}, which translates to
$$
\bar{h}_2d_2'-\bar{h}'_2d_2\delta_3\equiv \mu_2\bmod{d_2d_2'\delta_2\delta_3}
$$ 
It follows that there are $O(t^2\delta_2(\delta_2,d_2\delta_3)/N^2)$ many pairs $(h_2, h_2')$. Now we write $h_1=[h_2c_1/c_2]+g$ and $h_1'=[h_2'c_1'/c_2']+g'$, with $g, g'\ll 1+Ct\Delta /N$ from the first two conditions in \eqref{counting-problem-final}. Finally the number of $g, g'$ satisfying the fourth condition in \eqref{counting-problem-final} is given by  $O((\delta_1,d_1')(1+\delta_1t\Delta/N)(1+t\Delta/N))$. It follows that $N_0$ is dominated by
\begin{align*}
\mathop{\sum\sum\sum}_{\substack{\delta_i\sim \Delta_i}}\mathop{\sum\sum}_{\substack{d_1'\sim C/\Delta_1\\ d_2\sim C/\Delta_2\Delta_3}}\:\mathop{\sum\sum}_{\substack{\mu_i\sim M_i/\Delta_i\\\delta_1\mu_1-\delta_2\mu_2\ll \frac{(C\ell)^2t\Delta^2}{\tilde{N}}}}\:&\frac{t^2}{N^2}\;\delta_2 (\delta_1,d_1')(\delta_2,d_2\delta_3)\\
&\times \left(1+\frac{t\Delta\Delta_1}{N}\right)\left(1+\frac{t\Delta}{N}\right).
\end{align*}
Summing over $\mu_i$, and gluing $\delta_3$ with $d_2$, we arrive at 
\begin{align*}
\mathop{\sum\sum}_{\substack{\delta_i\sim \Delta_i}}&\mathop{\sum\sum}_{\substack{d_1'\sim C/\Delta_1\\ d_2\sim C/\Delta_2}}\:\frac{t^2}{N^2}\;M_2 (\delta_1,d_1')(\delta_2,d_2)\:\left(1+\frac{(C\ell)^2t\Delta^2}{\delta_1\tilde{N}}\right)\left(1+\frac{t\Delta\Delta_1}{N}\right)\left(1+\frac{t\Delta}{N}\right).
\end{align*}
Finally executing the sums over the remaining variables we arrive at
\begin{align*}
\frac{t^2M_2C^2}{N^2}\: \left(1+\frac{(C\ell)^2t\Delta^2}{\Delta_1\tilde{N}}\right)\left(1+\frac{t\Delta\Delta_1}{N}\right)\left(1+\frac{t\Delta}{N}\right).
\end{align*}
The lemma now follows by using the fact that $C\ell \Delta\ll Qt^{-1/3}$.
\end{proof}

\begin{Lemma}
\label{lemma:zero-frequency-contribution}
The contribution of $u=0$ to the dual off-diagonal is satisfactory if $\delta<1/126$.
\end{Lemma}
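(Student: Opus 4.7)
The plan is to plug the bound for $N_0$ from the previous lemma into the block estimate for $\Theta_\ell$ and then propagate through the Cauchy chain $\Omega_\ell\leq \Theta_\ell\ll (\tilde N^2/\ell^4)\Xi_\ell^{1/2}$ and through \eqref{ineq-omega}. Concretely, for each dyadic block $(M_1,M_2,\Delta_1,C,\ell)$ with $u=0$ and $(m_1,m_2)\neq(0,0)$, the contribution to the dual off-diagonal simplifies, after using $\tilde N\asymp Q^2K^4/N$, to
\[
\frac{N^{1/2}HQ^{3/2}K^{2}}{\sqrt{C}\,t^{1/6}\,\ell^{3/2}}\;\left[\frac{(C\ell)^{5/2}N^{3}\Delta^{2}t^{1/3}}{M_2(QK^2)^{5/2}}\;N_0(M_1,M_2,\Delta_1)\right]^{1/4},
\]
and it suffices to show this is $\ll \sqrt N\, H K Q^{2}\, t^{1/3-\delta}$ uniformly in the remaining dyadic parameters, since each of the $O(t^\varepsilon)$-many dyadic sums is absorbable.

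I would first reduce the dyadic sum over $M_1$ to an $O(t^\varepsilon)$ factor using the constraint $M_1-M_2\ll (C\ell)^{2}t\Delta^{2}/\tilde N$ (and, by symmetry, treat the case $M_1\neq 0,\, M_2=0$ identically). The bound for $N_0$ is a product of three $(1+\cdots)$ terms, which produces an eight-fold case split depending on which of the three factors is governed by its $1$ versus its non-trivial part. Within each subcase one optimises over $\Delta_1\in[1,C]$: the pair $(1+t^{1/3}N/(\Delta_1K^{4}))(1+t\Delta\Delta_1/N)$ is balanced near $\Delta_1\asymp (t^{-2/3}N^{2}/(K^{4}\Delta))^{1/2}$, so the analysis splits further according to whether this optimum lies in $[1,C]$ or is attained at the boundary. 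One then substitutes $Q$ from \eqref{pick-q} and $K$ from \eqref{pick-k} in each of the two regimes $N\leq t^{2/3}$ and $N>t^{2/3}$, and checks the resulting polynomial inequality in powers of $t$.

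Throughout, the relevant side-constraints are $C\ell\leq Q$, $M_2\leq C\ell N/QK^{2}$, $C\ell\Delta\ll Qt^{-1/3}$, and the fact that $C$ lies outside the ranges \eqref{small-c-range-initial} and \eqref{small-c-range} treated earlier. After taking logarithms, each of the inequalities reduces to a linear constraint on $\delta$; the expectation is that the worst subcase is the one in which the non-trivial portions of all three $(1+\cdots)$-factors are active and $C$ sits at the top of its admissible range, and this is the subcase that forces $\delta<1/126$.

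The main obstacle is the bookkeeping: tracking the eight $N_0$-subcases, the further sub-ranges in $\Delta_1$, and the two regimes of $K$ dictated by \eqref{pick-k}, all simultaneously. No new analytic input is needed beyond the stationary-phase estimates of Lemmas~\ref{lemma:int-1} and~\ref{lemma:int-2} and the counting inputs already established; the argument is ultimately a careful exponent arithmetic that confirms $1/126$ as the worst-case admissible saving.
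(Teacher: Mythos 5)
Your plan follows the paper's route: substitute the $N_0$ bound through $\Xi_\ell^{1/2}\to\Theta_\ell\to\Omega_\ell^{1/2}\to$\eqref{ineq-omega}, and your explicit block expression $\frac{N^{1/2}HQ^{3/2}K^{2}}{\sqrt{C}\,t^{1/6}\,\ell^{3/2}}[\cdots]^{1/4}$ is correct (it matches the paper's after simplification with $\tilde N\asymp Q^2K^4/N$). The paper proceeds in the same way: expand the three $(1+\cdots)^{1/4}$ factors, split into the two terms carrying $\Delta_1$ in the numerator (which it controls via $\Delta_1\ll C$ and $C\ell\Delta\ll Qt^{-1/3}$) and the remaining six (where it sets $\Delta_1=1$), then substitutes the choices \eqref{pick-q} and \eqref{pick-k} in the two $N$-regimes.

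The gap is that the proposal stops at the planning stage and defers the decisive step to ``the expectation is that the worst subcase is\ldots''. A claim with a precise numerical threshold like $\delta<1/126$ is proved only by the exponent arithmetic itself, and your predicted worst case is not obviously the right one for two reasons. First, the first two $(1+\cdots)$ factors in $N_0$ have opposite monotonicity in $\Delta_1$ (one wants $\Delta_1$ large, the other small), so ``all three non-trivial parts simultaneously active'' forces the $\Delta_1^{\pm 1/4}$ to cancel and collapses to a $\Delta_1$-independent term; it is not a separately maximising subcase, and the genuine maximisers are among the paper's two groups (numerator $\Delta_1$ with $\Delta_1\leq C$, versus $\Delta_1=1$). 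Second, ``$C$ at the top of its admissible range'' is not uniformly the worst choice: several of the eight terms, e.g.\ the one proportional to $t^{1/12}K^{1/4}/(C^{1/8}Q^{1/8}N^{1/4})$ arising from the $(t\Delta/N)^{1/4}$ part, are \emph{decreasing} in $C$, so the binding constraint there is the lower bound on $C$ coming from the ranges complementary to \eqref{small-c-range-initial} and \eqref{small-c-range}. You would need to carry the side constraints $C\ell\leq Q$, $C\ell\Delta\ll Qt^{-1/3}$, and the complementary $C$-ranges through each of the eight subcases, in both $K$-regimes of \eqref{pick-k}, to actually locate the binding inequality and confirm that $1/126$ is the threshold; as it stands this is asserted rather than derived.
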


\begin{proof}
Substituting the bound for $N_0$ from the previous lemma in place of $N(\dots)$ in \eqref{omega-22-dominant-new}, and then substituting the resulting bound in place of $\Omega_\ell$ in \eqref{ineq-omega} we obtain
\begin{align*}
    \sqrt{N}HQ^2Kt^{1/3}\frac{C^{1/8}N^{1/4}}{K^{1/4}Q^{5/8}t^{1/12}\ell^{3/2}}\left(1+\frac{t^{1/12}N^{1/4}}{\Delta_1^{1/4}K}\right)\left(1+\frac{(t\Delta\Delta_1)^{1/4}}{N^{1/4}}\right)\left(1+\frac{(t\Delta)^{1/4}}{N^{1/4}}\right).
\end{align*}
(The term $\ell^{3/2}$ in the denominator will be responsible for the convergence of the sum over $\ell$. As such we are going to ignore the $\ell$ terms below.)
Once we multiply the terms out we get eight terms, two of which have $\Delta_1$ on the numerator. We consider these two terms first, namely
\begin{align*}
    \frac{C^{1/8}N^{1/4}}{K^{1/4}Q^{5/8}t^{1/12}}\frac{(t\Delta\Delta_1)^{1/4}}{N^{1/4}}\left(1+\frac{(t\Delta)^{1/4}}{N^{1/4}}\right).
\end{align*}
Using the fact that $\Delta_1\ll C$, and that $C\Delta\ll Qt^{-1/3}$ we see that the above expression is bounded by
\begin{align*}
    \frac{t^{1/12}C^{1/8}}{K^{1/4}Q^{3/8}}\left(1+\frac{(t\Delta)^{1/4}}{N^{1/4}}\right)\ll \frac{t^{1/12}C^{1/8}}{K^{1/4}Q^{3/8}}\left(1+\frac{t^{1/6}}{N^{1/4}}+\frac{Q^{1/4}K^{1/2}}{(CN)^{1/4}}\right).
\end{align*}
The last expression is dominated by $t^{-\delta}$ if  $\delta<1/126$, when we pick $Q$ according to \eqref{pick-q} and $K$ according to \eqref{pick-k}. Next we consider terms which do not have $\Delta_1$ in the numerator. In this case we can as well take $\Delta_1=1$. As such we need to consider the expression
\begin{align}
\label{in-place}
    \frac{C^{1/8}N^{1/4}}{K^{1/4}Q^{5/8}t^{1/12}}\left(1+\frac{t^{1/12}N^{1/4}}{K}\right)\left(1+\frac{(t\Delta)^{1/2}}{N^{1/2}}\right).
\end{align}
Now using the fact 
$$
\frac{t\Delta}{N}\ll \frac{t^{2/3}}{N}+\frac{QK^2}{CN} \ll \frac{t^{2/3}}{N}+\frac{N^{1/2}}{Qt^{1/3-\delta}}\ll \frac{t^{2/3}}{N}+\frac{K^{6/5}}{N^{1/10}t^{2/5-3\delta/5}}\ll \frac{t^{2/3}}{N}+1,
$$
we get that the contribution of this last expression is also dominated by $t^{-\delta}$ if $\delta<1/126$.
\end{proof}

\begin{remark}
It should be possible to improve our estimate by exploiting the fact that in most cases we would not have any $g$, $g'$. However for the purpose of this paper the above estimate is satisfactory. 
\end{remark}

\begin{Lemma}
\label{lemma:counting-problem}
We have
\begin{align*}
N_{\neq 0}\ll M_2\frac{t^{7/3}C^3Q}{\ell \Delta_1\Delta_2\Delta_3 N^2K^2}\: \left(1+\frac{t^{1/3}N}{\Delta_1K^4}\right)\left(1+\frac{t\Delta\Delta_1}{N}\right)\left(1+\frac{t\Delta}{N}\right).
\end{align*}
\end{Lemma}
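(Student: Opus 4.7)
My plan is to mirror the structure of the previous lemma (for $u = 0$) as closely as possible, modifying only the step where the equation $d_1 d_2' - d_1' d_2 = u$ is used. I would begin by adopting the exact same divisor decomposition: set $\delta_i = (c_i, c_i')$ and $\delta_3 = (c_1/\delta_1, c_2/\delta_2)$, so that $c_i = \delta_i \delta_3 d_i$ and $c_i' = \delta_i d_i'$ with $(d_1, d_2) = 1$ and $(d_i, d_i') = 1$, and write $m_i = \delta_i \mu_i$. The size constraint becomes $d_1 \sim C/\Delta_1 \Delta_3$, $d_1' \sim C/\Delta_1$, $d_2 \sim C/\Delta_2 \Delta_3$, $d_2' \sim C/\Delta_2$, and the key identity $c_1 c_2' - c_1' c_2 = \delta_1 \delta_2 \delta_3 u$ reduces to $d_1 d_2' - d_1' d_2 = u$, now with $u \neq 0$ and $|u| \leq U_0 := C^3\ell t \Delta^2/(\Delta_1 \Delta_2 \Delta_3 Q K^2)$.

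The new ingredient is counting the quadruples $(d_1, d_1', d_2, d_2')$ satisfying this nontrivial linear Diophantine equation. I would exploit that $(d_1, d_2) = 1$: fix $(d_1, d_2)$ and any $u \neq 0$; then all integer solutions $(d_1', d_2')$ of $d_1 d_2' - d_1' d_2 = u$ form a single coset, parametrized by $(d_1'^{(0)} + k d_1, d_2'^{(0)} + k d_2)$ for $k \in \mathbb{Z}$. The constraints $d_1' \sim C/\Delta_1$ and $d_2' \sim C/\Delta_2$ force $k$ into an interval of length $O(\Delta_3)$. Summing this over $u \in [-U_0, U_0] \setminus\{0\}$ and then over $(d_1, d_2)$ of combined size $O(C^2/\Delta_1 \Delta_2 \Delta_3^2)$ produces the count $O(C^2 U_0 / \Delta_1 \Delta_2 \Delta_3)$ for the admissible quadruples $(c_1, c_1', c_2, c_2')$ --- the analogue of the factor $O(C^2/\Delta_1 \Delta_2)$ that appeared for $u = 0$.

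With the $(c_i, c_i')$ count in hand, the remaining work is to multiply by the number of $(h_2, h_2', \mu_2)$ and $(h_1, h_1', \mu_1)$ tuples, and here I would quote the estimates verbatim from the previous lemma: the congruence $\bar h_2 c_2' - \bar h_2' c_2 \equiv m_2 \bmod c_2 c_2'$ gives $O(t^2 \delta_2 (\delta_2, d_2 \delta_3)/N^2)$ admissible pairs $(h_2, h_2')$ per $\mu_2$, while writing $h_1 = [h_2 c_1/c_2] + g$ and $h_1' = [h_2' c_1'/c_2'] + g'$ and imposing the mod $c_1 c_1'$ congruence gives $O((\delta_1, d_1')(1 + \delta_1 t\Delta/N)(1 + t\Delta/N))$ pairs per $\mu_1$. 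The constraint $|m_1 - m_2| \ll (C\ell)^2 t\Delta^2/\tilde N$ limits the $\mu_1$-sum, and the $\mu_2$-sum simply produces the factor $M_2/\Delta_2$.

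Finally I would multiply all ingredients, sum over $\delta_i \sim \Delta_i$ and over $(d_1', d_2)$ against the surviving gcd weights (as in the previous proof), and verify that the net bound is precisely the one claimed --- the expected extra factor over the $u = 0$ bound being $t^{1/3} C Q / (\ell \Delta_1 \Delta_2 \Delta_3 K^2)$. The main delicacy, and the step I expect to require the most bookkeeping, is tracking the interaction between the coprimality conditions on the $d$'s and the gcd factors $(\delta_i, d_j \ldots)$ appearing in the $(h_i, h_i')$ counts, so as to reproduce the exact powers $\Delta_1 \Delta_2 \Delta_3$ in the denominator; the Diophantine count itself is entirely elementary.
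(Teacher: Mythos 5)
Your proposal mirrors the paper's argument essentially step for step: you use the same divisor decomposition $c_i=\delta_i\delta_3 d_i$, $c_i'=\delta_i d_i'$, exploit $(d_1,d_2)=1$ to reduce the equation $c_1c_2'-c_1'c_2=\delta_1\delta_2\delta_3 u$ to a congruence/coset count giving $O(\Delta_3)$ pairs $(d_1',d_2')$ per $(d_1,d_2,u)$, then sum over $0<|u|\ll U_0$, reuse the $(h_i,h_i',\mu_i)$ counts from the $u=0$ case, and assemble. This is the same route the paper takes, so no further comment is needed beyond noting that in the $u\neq 0$ case the free inner variable is $d_1$ rather than $d_1'$, so the surviving gcd weight is $(\delta_1,d_1\delta_3)$ rather than $(\delta_1,d_1')$ — a cosmetic change that does not affect the average estimate.
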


\begin{proof}
Given $\delta_1,\delta_2,\delta_3, d_1,d_2, \mu_1, \mu_2, u$, we now determine the remaining variables. First $d_1'$ and $d_2'$ are obtained using 
$$
\d_1'd_2-d_1d_2'=u
$$ 
Once $d_1'$ is determined there is at most one choice for $d_2'$. Writing the equation as a congruence modulo $d_1$, we see that there are $O(t^\varepsilon)$ many $d_1'$, and hence those many pairs $(d_1',d_2')$. Next $h_2$ and $h_2'$ are determined using the last condition in \eqref{counting-problem-final}, which translates to
$$
\bar{h}_2d_2'-\bar{h}'_2d_2\delta_3\equiv \mu_2\bmod{d_2d_2'\delta_2\delta_3}
$$ 
It follows that there are $O(t^2\delta_2(\delta_2,d_2\delta_3)/N^2)$ many pairs $(h_2, h_2')$. Now we write $h_1=[h_2c_1/c_2]+g$ and $h_1'=[h_2'c_1'/c_2']+g'$, with $g, g'\ll 1+Ct\Delta /N$ from the first two conditions in \eqref{counting-problem-final}. Finally the number of $g, g'$ satisfying the fourth condition in \eqref{counting-problem-final} is given by $O((\delta_1,d_1)(1+\delta_1t\Delta/N)(1+t\Delta/N))$. It follows that
\begin{align}
\label{n0}
N_{\neq 0}\ll \mathop{\sum\sum\sum}_{\delta_i\sim \Delta_i}&\mathop{\sum\sum}_{d_i\sim C/\Delta_i\Delta_3}\:\mathop{\sum\sum}_{\substack{\mu_i\sim M_i/\Delta_i\\\delta_1\mu_1-\delta_2\mu_2\ll \frac{(C\ell)^2t\Delta^2}{\tilde{N}}}}\:\sum_{0<u\ll \frac{C^3\ell t\Delta^2}{\Delta_1\Delta_2\Delta_3 QK^2}}\frac{t^2}{N^2}\\
\nonumber &\times \delta_2\delta_3 (\delta_1,d_1\delta_3)(\delta_2,d_2\delta_3)\:\left(1+\frac{t\Delta\Delta_1}{N}\right)\left(1+\frac{t\Delta}{N}\right).
\end{align}
Summing over $\mu_i$ and $u$ we see that the above expression  is dominated by
\begin{align*}
 \mathop{\sum\sum\sum}_{\delta_i\sim \Delta_i}&\mathop{\sum\sum}_{d_i\sim C/\Delta_i\Delta_3}\:\frac{C^3\ell t^3\Delta^2}{N^2\Delta_1 QK^2}\;\frac{M_2\Delta_3}{\delta_2}\left(1+\frac{(C\ell)^2t\Delta^2}{\delta_1\tilde{N}}\right) \\
&\times  (\delta_1,d_1\delta_3)(\delta_2,d_2\delta_3)\:\left(1+\frac{t\Delta\Delta_1}{N}\right)\left(1+\frac{t\Delta}{N}\right).
\end{align*}
Then summing over $\delta_1$, $\delta_2$ we arrive at
\begin{align*}
 \sum_{\delta_3\sim \Delta_3}\mathop{\sum\sum}_{d_i\sim C/\Delta_i\Delta_3}\:\frac{C^3\ell t^3\Delta^2M_2\Delta_3}{N^2QK^2}\;\left(1+\frac{(C\ell)^2t\Delta^2}{\Delta_1\tilde{N}}\right) \left(1+\frac{t\Delta\Delta_1}{N}\right)\left(1+\frac{t\Delta}{N}\right).
\end{align*}
This is seen to be dominated by 
\begin{align*}
\frac{C^5\ell t^3\Delta^2M_2}{\Delta_1\Delta_2\Delta_3 N^2QK^2}\;\left(1+\frac{(C\ell)^2t\Delta^2}{\Delta_1\tilde{N}}\right) \left(1+\frac{t\Delta\Delta_1}{N}\right)\left(1+\frac{t\Delta}{N}\right).
\end{align*}
  The lemma follows by using the fact that $C\ell \Delta\ll Qt^{-1/3}$.
\end{proof}

\begin{Lemma}
The contribution of $u\neq 0$ to the dual off-diagonal is satisfactory if $\delta<1/126$ and if either $N\ll t^{1-10\delta}$ or $\Delta_1\Delta_2\Delta_3\gg t^{18\delta}$.
\end{Lemma}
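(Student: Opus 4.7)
The plan is to substitute the counting bound from Lemma~\ref{lemma:counting-problem} in place of $N(C,M_1,M_2)$ in \eqref{omega-22-dominant-new}, then insert the resulting estimate for $\Theta_\ell$ into \eqref{ineq-omega}, and finally verify that under the stated hypotheses the outcome is bounded by $\sqrt{N}HKQ^2 t^{1/3-\delta}$. The linear factor $M_2$ in $N_{\neq 0}$ cancels the $1/M_2$ inside the square bracket of \eqref{omega-22-dominant-new}, so after taking the square root the dependence on $M_2$ (and similarly $M_1$) disappears up to $t^\varepsilon$, and one obtains a clean bound in $C$, $Q$, $K$, $N$, $t$, $\Delta$ and $\Delta_1\Delta_2\Delta_3$, with the factor $(\Delta_1\Delta_2\Delta_3)^{1/2}$ sitting favourably in the denominator.

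Next I would expand the product of the three error factors $(1+t^{1/3}N/\Delta_1K^4)(1+t\Delta\Delta_1/N)(1+t\Delta/N)$ and analyse the eight resulting terms separately, exactly in the spirit of the proof of Lemma~\ref{lemma:zero-frequency-contribution}. Throughout, I will rely on $Q$ as chosen in \eqref{pick-q}, $K$ as chosen in \eqref{pick-k}, on the bound $C\ell\Delta\ll Qt^{-1/3}$ (valid since $K\ll t^{1/3}$), on $C\ell\ll Q$, on the complementary ranges to \eqref{small-c-range-initial} and \eqref{small-c-range}, and on the trivial inequality $\Delta_i\ll C$ when an error factor carrying $\Delta_1$ on the numerator appears. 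The dominant term, where all three error factors are set to $1$, should come out as $t^{-\delta}$ on the nose for $\delta<1/126$ after substituting the explicit choices of $Q$ and $K$.

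The main obstacle is the behaviour of the factor $(1+t\Delta/N)$, which does not decay when $N$ is close to $t$: using
\[
\frac{t\Delta}{N}\ll \frac{t^{2/3}}{N}+\frac{QK^2}{CN},
\]
the second summand is manageable via the complementary range of $C$, but the first becomes trivial unless $N$ is noticeably smaller than $t$. This forces the dichotomy in the statement: if $N\ll t^{1-10\delta}$ then $t^{2/3}/N$ is itself a genuine power saving and all eight terms absorb into $t^{-\delta}$; otherwise, one must recover the loss from this factor (and similarly from $1+t\Delta\Delta_1/N$) through the divisor $(\Delta_1\Delta_2\Delta_3)^{1/2}$ in the denominator, which is what the hypothesis $\Delta_1\Delta_2\Delta_3\gg t^{18\delta}$ is tailored to provide. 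Balancing exponents term by term and taking the worst case will yield the threshold $\delta<1/126$, completing the lemma.
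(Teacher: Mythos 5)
Your overall plan—substitute $N_{\neq 0}$ from Lemma~\ref{lemma:counting-problem} into \eqref{omega-22-dominant-new}, feed the result into \eqref{ineq-omega}, and then check the final power of $t$—is the right skeleton, and your bookkeeping of how the factor $M_2$ cancels the $1/M_2$ is correct. However, the diagnosis of what forces the dichotomy is off, and this leads you to a false claim about the dominant term.

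The key point you miss is the multiplicative overhead of $N_{\neq 0}$ over $N_0$. The bound of Lemma~\ref{lemma:counting-problem} equals the bound of the $u=0$ lemma times $t^{1/3}CQ/(\ell\Delta_1\Delta_2\Delta_3 K^2)$. After the two nested square roots (one in \eqref{omega-22-dominant-new}, one in $\Omega_\ell^{1/2}$ in \eqref{ineq-omega}), this contributes an extra factor $t^{1/12}(CQ)^{1/4}/(\Delta_1\Delta_2\Delta_3)^{1/4}K^{1/2}$ multiplying \eqref{in-place}, which is exactly the first block of \eqref{generic-count}. With $C\asymp Q$, $\Delta_1\Delta_2\Delta_3\asymp 1$, $N\asymp t$, $Q\asymp t^{4/15}$ and $K\asymp t^{1/3}$ (the leading exponents from \eqref{pick-q} and \eqref{pick-k}), this extra factor is $\asymp t^{1/12}Q^{1/2}/K^{1/2}\asymp t^{1/20}$, which precisely eats the $t^{-1/20}$ saving that \eqref{in-place} provides. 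Hence your claim that ``the dominant term, where all three error factors are set to $1$, should come out as $t^{-\delta}$ on the nose'' is false when $N\asymp t$ and $\Delta_1\Delta_2\Delta_3\asymp 1$; in that regime the bound is $\asymp t^{0}$ (up to $t^{O(\delta)}$), not a power saving.

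By contrast, the factor $(1+t\Delta/N)$ that you single out as the main obstacle is in fact always $O(1)$: one has $t\Delta/N\ll t^{2/3}/N + QK^2/(CN)$, the first term is $\ll 1$ since $N\gg t^{2/3}$, and the second is $\ll 1$ on the complementary range of $C$. This factor neither causes a loss nor needs to provide a saving, so it cannot ``force the dichotomy.'' The dichotomy comes from the extra $t^{1/12}(CQ)^{1/4}/(\Delta_1\Delta_2\Delta_3)^{1/4}K^{1/2}$: either $N\ll t^{1-10\delta}$, which (through the choice \eqref{pick-q}) pushes $Q$ down enough to neutralize it, or $\Delta_1\Delta_2\Delta_3\gg t^{18\delta}$, so that $(\Delta_1\Delta_2\Delta_3)^{1/4}\gg t^{9\delta/2}$ absorbs it. You should also note the paper's shortcut: the $\Delta_1$ sitting in the numerator of $(1+t\Delta\Delta_1/N)$ is cancelled directly against the $\Delta_1$ in the new denominator, so one does not actually need to expand all eight terms afresh; one reduces to \eqref{generic-count} and checks that expression under the two hypotheses. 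As written your proposal would not close the argument.
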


\begin{proof}
First compare the bounds for $N_0$ and $N_{\neq 0}$. The later is obtained from the former by multiplying with the factor $t^{1/3}CQ/K^2$. We observe that $t^{1/3}Q/K^2\ll 1$ for $\delta<1/126$. Now in the proof of Lemma~\ref{lemma:zero-frequency-contribution}, in the terms with $\Delta_1$ in the numerator, we replaced $\Delta_1$ by $C$. In $N_{\neq 0}$ the $\Delta_1$ in the numerator is balanced by the extra factor in the denominator, and so the contribution of these terms is balanced as $t^{1/3}Q/K^2\ll 1$. So we are led to consider the expression
\begin{align}
\label{generic-count}
    \frac{t^{1/12}(CQ)^{1/4}}{(\Delta_1\Delta_2\Delta_3)^{1/4}K^{1/2}}\frac{C^{1/8}N^{1/4}}{K^{1/4}Q^{5/8}t^{1/12}}\left(1+\frac{t^{1/12}N^{1/4}}{K}\right)\left(1+\frac{(t\Delta)^{1/2}}{N^{1/2}}\right),
\end{align}
in place of \eqref{in-place}.
This is dominated by $t^{-\delta}$ for $\delta<1/126$, if either $N\ll t^{1-10\delta}$ or $\Delta_1\Delta_2\Delta_3\gg t^{18\delta}$.
\end{proof}

\begin{Lemma}
\label{lemma:counting-problem-2}
Suppose $t^{1-10\delta}\ll N\ll t^{1+\varepsilon}$, and $\Delta_1\Delta_2\Delta_3\ll t^{18\delta}$. 
The contribution of $u\neq 0$ to the dual off-diagonal is satisfactory if $\delta<1/600$ and $M_2\Delta_2\gg t^{36\delta}$.
\end{Lemma}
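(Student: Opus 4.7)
The plan is to adapt the analysis of the $u\ne 0$ case from Lemma~\ref{lemma:counting-problem}, extracting an additional saving from the hypothesis $M_2\Delta_2\gg t^{36\delta}$ to handle the regime $t^{1-10\delta}\ll N$ and $\Delta_1\Delta_2\Delta_3\ll t^{18\delta}$ in which the generic bound \eqref{generic-count} falls short. First I would substitute the counting bound $N_{\neq 0}$ from Lemma~\ref{lemma:counting-problem} and the integral bound $\mathfrak{I}\ll |m_2|^{-1}(C\ell)^{5/2}Nt^{2/3}\Delta/(QK^2)^{5/2}$ from Lemma~\ref{lemma:int-2} into \eqref{omega-22-dominant-new}, then into \eqref{ineq-omega}, and carry out the $\ell$-sum trivially. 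This yields a bound of the same shape as \eqref{generic-count} but with explicit dependence on $M_2$ and $\Delta_2$.

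The key step is to revisit how $M_2$ and $\Delta_2$ enter. The integral bound gives $\sum_{m_2\sim M_2,\,\delta_2\mid m_2}|m_2|^{-1}\ll t^\varepsilon/\Delta_2$, while the count of pairs $(\mu_1,\mu_2)$ with $|\delta_1\mu_1-\delta_2\mu_2|\ll (C\ell)^2 t\Delta^2/\tilde N$ that underlies $N_{\neq 0}$ can be refined: in the present regime the right-hand side of this constraint is small compared to the range of $\delta_2\mu_2$ when $M_2\Delta_2$ is large, so for each $\mu_2$ the admissible $\mu_1$ is forced into a short window. Propagating this refinement through \eqref{omega-22-dominant-new} and the Cauchy square-root in \eqref{ineq-omega} produces, in addition to the generic bound \eqref{generic-count}, an extra saving of the form $(M_2\Delta_2)^{-\alpha}$ for a suitable $\alpha>0$ coming out of the square-root, which under the hypothesis becomes at most $t^{-36\alpha\delta}$.

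Taking $\alpha=1/4$ (the natural exponent emerging from the Cauchy step), this extra saving is $\ll t^{-9\delta}$, which more than compensates for the $(\Delta_1\Delta_2\Delta_3)^{-1/4}\ll t^{-9\delta/2}$ shortfall of \eqref{generic-count} in the regime $\Delta_1\Delta_2\Delta_3\ll t^{18\delta}$. Combining with the choices of $Q$ and $K$ from \eqref{pick-q}, \eqref{pick-k}, and propagating the $(1+\cdots)(1+\cdots)$ factors inherited from Lemma~\ref{lemma:counting-problem}, one verifies that the total bound on the $u\ne 0$ contribution is $\ll \sqrt N H Q^2 K t^{1/3-\delta}$ provided $\delta<1/600$; the numerical constant $1/600$ is dictated by the weakest of these $(1+\cdots)$ factors once the improved savings have been injected.

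The main obstacle will be the careful bookkeeping: verifying that the refinement of the $(\mu_1,\mu_2)$-count indeed delivers the required power of $M_2\Delta_2$ uniformly across the various dyadic ranges of $M_1,M_2,\Delta_1,\Delta_2,\Delta_3$, across the two subregimes for $\Delta$ arising in \eqref{h}, and in both branches of the choice of $K$ in \eqref{pick-k}, while ensuring that the resulting expression is dominated by $t^{-\delta}$ for $\delta<1/600$ after the substitutions for $Q$ and $K$.
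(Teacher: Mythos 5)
Your proposal does not contain the key idea of the paper's proof, and the mechanism you describe would not produce the saving you need.

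You propose to extract an extra saving by refining the count of pairs $(\mu_1,\mu_2)$ satisfying $|\delta_1\mu_1-\delta_2\mu_2|\ll (C\ell)^2t\Delta^2/\tilde N$, observing that when $M_2\Delta_2$ is large the constraint forces $\mu_1$ into a short window given $\mu_2$. But this observation is already fully exploited in the proof of Lemma~\ref{lemma:counting-problem}: there the $\mu_1$-sum is bounded by $1+(C\ell)^2t\Delta^2/(\delta_1\tilde N)$ and the $\mu_2$-sum then produces the factor $M_2/\Delta_2$. No further saving of the form $(M_2\Delta_2)^{-\alpha}$ can be squeezed from this count alone, and in any case the $M_2$-dependences in $N_{\ne 0}$ and in the $1/M_2$ factor of Lemma~\ref{lemma:int-2} cancel inside \eqref{omega-22-dominant-new} before the Cauchy square roots are taken, so there is nothing left to gain a power of $M_2$ from. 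Your heuristic exponent $\alpha=1/4$ is also not what emerges in the paper: the effective saving is $(M_2\Delta_2)^{-1/8}$, not $t^{-9\delta}$ but $t^{-9\delta/2}$, which changes the arithmetic.

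The actual source of the extra saving in the paper is a genuinely different mechanism that your proposal omits entirely. The first constraint in \eqref{counting-problem-final}, $\|h_2c_1/c_2\|\ll Qt^{2/3}/N$, is translated via the congruences $h_2\equiv d_2'\bar\mu_2\bmod{d_2}$ and $d_2'\equiv -u\bar d_1\bmod{d_2}$ into the diophantine condition $\|u\bar\mu_2\delta_1/d_2\|\ll Qt^{2/3}\Delta_2/N$. This event is then detected by an exponential sum of length $\Xi=N/Qt^{2/3}\Delta_2$ over a new variable $\alpha$; one applies reciprocity to move the modulus from $d_2$ to $\mu_2$, applies Cauchy and Poisson summation on the $d_2$-sum, and obtains a complete Kloosterman sum modulo $\mu_2$ which is bounded by Weil. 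It is precisely this Poisson/Weil step (plus the separate treatment of the $\xi=0$ diagonal) that produces the factor $\bigl(\Xi^{-1/8}+(\Delta_2/M_2)^{1/8}+\cdots\bigr)$ in the final estimate, and the hypothesis $M_2\Delta_2\gg t^{36\delta}$ is invoked exactly to make the middle term small. Without this exponential-sum detection and the appeal to Weil's bound, the argument cannot get below the threshold set by Lemma~\ref{lemma:counting-problem}, and no amount of bookkeeping will produce the stated $\delta<1/600$ range.

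Note also a smaller issue of logic: in your final paragraph you appeal to "the $(1+\cdots)(1+\cdots)$ factors inherited from Lemma~\ref{lemma:counting-problem}" as the obstacle determining $1/600$. In the actual proof, once $N\gg t^{1-10\delta}$ and $Q,K$ are chosen, those factors contribute only a tame $t^{O(\delta)}$; the decisive competition is between the $t^{-1/120}$-type saving produced by the exponential-sum argument and the accumulated $t^{O(\delta)}$ losses, which is what yields $\delta<1/600$.
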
 

\begin{proof}
The first condition of \eqref{counting-problem-final} implies that 
$$
\left\| \frac{h_2c_1}{c_2}\right\|\ll \frac{Ct\Delta}{N}\ll \frac{Qt^{2/3}}{N}.
$$
From this we conclude that
$$
\left\| \frac{h_2d_1\delta_1}{d_2}\right\|\ll \frac{Qt^{2/3}\Delta_2}{N}.
$$
Now $h_2\equiv d_2'\bar{\mu}_2\bmod{d_2}$, and $d_2'\equiv -u\bar{d}_1\bmod{d_2}$. So the condition reduces to
$$
\left\| \frac{u\bar{\mu}_2\delta_1}{d_2}\right\|\ll \frac{Qt^{2/3}\Delta_2}{N}.
$$
We detect this event using the exponential sum
\begin{align*}
\frac{1}{\Xi}\left|\sum_{\alpha\sim \Xi} e\left(\frac{\alpha u\bar{\mu}_2\delta_1}{d_2}\right)\right|
\end{align*}
where $\Xi=N/Qt^{2/3}\Delta_2$. (One would notice that in the ranges under consideration $\Xi>1$.) Applying the reciprocity relation we get
\begin{align*}
\frac{1}{\Xi}\left|\sum_{\alpha\sim \Xi} e\left(-\frac{\alpha u\bar{d}_2\delta_1}{\mu_2}\right)e\left(\frac{\alpha u\delta_1}{\mu_2 d_2}\right)\right|.
\end{align*}
Next consider the sum 
\begin{align*}
\sum_{d_2\sim C/\Delta_2\Delta_3}\:\left|\sum_{\alpha\sim \Xi} e\left(-\frac{\alpha u\bar{d}_2\delta_1}{\mu_2}\right)e\left(\frac{\alpha u\delta_1}{\mu_2 d_2}\right)\right|^2
\end{align*}
which is trivially bounded by $C\Xi^2/\Delta_2\Delta_3$.
Opening the absolute square and applying the Poisson summation formula on the sum over $d_2$ we arrive at 
\begin{align*}
\frac{C}{\Delta_2\Delta_3\mu_2} \sum_{d_2\in \mathbb{Z}}\:\mathop{\sum\sum}_{\alpha_1,\alpha_2 \sim \Xi} \;S(d_2,-\xi u\delta_1; \mu_2) \int W(y) e\left(\frac{\xi  u\delta_1\Delta_2\Delta_3}{\mu_2 Cy}-\frac{Cd_2y}{\Delta_2\Delta_3\mu_2}\right)\mathrm{d}y,
\end{align*}
where $\xi=\alpha_1-\alpha_2$. The case $\xi=0$ is treated separately. In this case we get the diagonal contribution with a saving of $\Xi$. It follows, using the Weil bound for the Kloosterman sums, that the contribution of $\xi\neq 0$ is bounded by
\begin{align*}
\frac{C\Xi }{\Delta_2\Delta_3\mu_2} \:\mathop{\sum}_{1\leq \xi  \ll \Xi} \;(\xi u\delta_1,\mu_2)+
\frac{C\Xi }{\Delta_2\Delta_3\mu_2^{1/2}} \:\mathop{\sum}_{1\leq \xi  \ll \Xi} \;(\xi u\delta_1,\mu_2)^{1/2} \;\sum_{1\leq |d_2|\ll D} 1
\end{align*}
where 
$$
D=\frac{\xi u\Delta_1(\Delta_2\Delta_3)^2}{C^2}+\frac{\Delta_2\Delta_3\mu_2}{C}.
$$
Since $\sum_{\mu_2\sim M_2/\Delta_2} (\xi u\delta_1,\mu_2)\ll t^\varepsilon M_2/\Delta_2$, it follows that in the first term we save $M_2/\Delta_2$ on average over $\mu_2$. For the second term we observe that $\xi u \Delta_1(\Delta_2\Delta_3)^2/C^2$ is smaller than $N\Delta_3\Delta/QK^2$, which is $O(t^{-\varepsilon})$ if $\delta\leq 1/300$. The second term reduces to (on average over $\mu_2$) 
$$
\frac{\Xi^2 M_2^{1/2}}{\Delta_2^{1/2}}\ll \frac{\Xi^2 (NC\ell)^{1/2}}{\Delta_2^{1/2}(QK^2)^{1/2}}.
$$
Hence we have saved $(CQ)^{1/2}K/N^{1/2}\ell^{1/2}\Delta_2^{1/2}\Delta_3$. Consequently in place of \eqref{generic-count} we get
\begin{align*}
    \frac{C^{3/8}N^{1/4}}{(\Delta_1\Delta_2\Delta_3)^{1/4}K^{3/4}Q^{3/8}}\:\left(1+\frac{t^{1/12}N^{1/4}}{K}\right)\left(\frac{1}{\Xi^{1/8}}+\frac{\Delta_2^{1/8}}{M_2^{1/8}}+\frac{(N\Delta_2)^{1/16}\Delta_3^{1/8}}{(CQ)^{1/16}K^{1/8}}\right).
\end{align*}
The contribution of the middle term in the second pair of braces is $O(t^{-\delta})$ if $M_2\Delta_2\gg t^{36\delta}$. The contribution of the other terms turns out to be dominated by
$$
t^{7\delta/2}(t^{-1/120+7\delta/20}+t^{-1/80-\delta/10})\ll t^{-1/120+77\delta/20},
$$
which is $O(t^{-\delta})$ if $\delta<1/600$.
\end{proof}

\begin{remark}
The above lemma takes care of the generic case. It is important to observe that the proof exploits the Weil bound for Kloosterman sums. We are now left with the special case where $M_2$ is small. In this case we will use a non-trivial (though elementary) result from \cite{BI}.
\end{remark}

\begin{Lemma}
\label{lemma:counting-problem-3}
Suppose $t^{1-10\delta}\ll N\ll t^{1+\varepsilon}$, and $\Delta_1\Delta_2\Delta_3\ll t^{18\delta}$. 
The contribution of $u\neq 0$ to the dual off-diagonal is satisfactory if $\delta<1/1200$ and $M_2\Delta_2\ll t^{36\delta}$.
\end{Lemma}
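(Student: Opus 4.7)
The plan is to handle the complementary regime $M_2\Delta_2 \ll t^{36\delta}$, in which the $M_2/\Delta_2$-saving of Lemma \ref{lemma:counting-problem-2} no longer suffices, by replacing the Weil-bound-driven exponential-sum step with a direct arithmetic count of the indicator condition $\|u\bar{\mu}_2\delta_1/d_2\| \ll Qt^{2/3}\Delta_2/N$ derived in that proof. The starting observation is that $\mu_2 \sim M_2/\Delta_2$ now has at most $O(t^{36\delta}/\Delta_2^2)$ values, so freezing the numerator $a := u\bar{\mu}_2\delta_1$ (with $\mu_2$, $u$, $\delta_1$ ranging over their respective short ranges determined by \eqref{n0} and the constraint $\delta_1\mu_1 - \delta_2\mu_2 \ll (C\ell)^2 t\Delta^2/\tilde N$) and then counting the admissible $d_2$ is the natural move.

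For each such frozen triple $(u,\mu_2,\delta_1)$, the remaining task is to count $d_2 \sim C/\Delta_2\Delta_3$, coprime to $a$, with $\|a/d_2\| \ll Qt^{2/3}\Delta_2/N$. Here I would invoke the elementary counting lemma from \cite{BI} for coprime pairs with a prescribed small fractional part, which improves on the trivial count $O(C\eta/\Delta_2\Delta_3 + 1)$ (with $\eta = Qt^{2/3}\Delta_2/N$) via a continued-fraction-style argument and is effective exactly in the regime where the numerator ranges over a short set. Substituting the resulting refined $N_{\neq 0}$ into \eqref{omega-22-dominant-new} and then into \eqref{ineq-omega}, summing trivially over $\ell$ and over the dyadic $\Delta_i$'s, reduces the problem to verifying that the final expression is bounded by $\sqrt{N}HQ^2K t^{1/3-\delta}$ once $Q$ and $K$ are fixed via \eqref{pick-q} and \eqref{pick-k}.

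The main obstacle — and the reason the threshold sharpens from $1/600$ in Lemma \ref{lemma:counting-problem-2} to $1/1200$ here — will be the delicate exponent arithmetic in that final balancing step. Under the two side-constraints $t^{1-10\delta} \ll N \ll t^{1+\varepsilon}$ and $\Delta_1\Delta_2\Delta_3 \ll t^{18\delta}$, the several cross-terms arising from the $(1+\cdots)$ factors in the bound for $N_{\neq 0}$ each force their own upper bound on $\delta$, and matching them all against the target saving $t^{-\delta}$ is where the factor-of-two loss relative to the previous lemma enters. A secondary technical point is cleanly extracting the gcd factors of type $(\delta_1,d_1)$ and $(\delta_2,d_2)$ already present in the proof of Lemma \ref{lemma:counting-problem} so that the \cite{BI} count applies uniformly in dyadic form; I expect no new analytic tool beyond this, the novelty being purely the arithmetic replacement of Weil-type cancellation by Farey-type counting.
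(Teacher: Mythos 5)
Your plan runs into a fundamental obstruction before the exponent arithmetic ever starts: in the condition $\|u\bar{\mu}_2\delta_1/d_2\|\ll Qt^{2/3}\Delta_2/N$ the symbol $\bar{\mu}_2$ denotes the inverse of $\mu_2$ \emph{modulo} $d_2$, so the "numerator'' $u\bar{\mu}_2\delta_1$ changes with $d_2$ and cannot be frozen. This is not a diophantine-approximation problem of the form "count $d_2$ with $\|a/d_2\|\ll\eta$ for a fixed integer $a$,'' and no continued-fraction or Farey-interval count applies to it directly. Lemma~\ref{lemma:counting-problem-2} got around exactly this dependency by detecting the condition with an exponential sum, applying reciprocity to move the modular inverse from modulus $d_2$ to modulus $\mu_2$, and then running Poisson in $d_2$; when $\mu_2$ is small, that route collapses (Weil saves only $\sqrt{\mu_2}$), which is the whole point of the present lemma. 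Trying to "replace Weil by a direct count'' while staying in the $(u,\mu_2,\delta_1,d_2)$ coordinates does not repair this.

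What the paper actually does in the small-$M_2\Delta_2$ regime is change coordinates entirely: it returns to the raw variables $(h_2,h_2',c_2,c_2',m_2)$ and the system $\bar h_2 c_2'-\bar h_2' c_2\equiv m_2\bmod{c_2c_2'}$ together with the two size constraints $\|\bar h_2/c_2-\bar h_2'/c_2'\|\leq \mu_2\Delta_2/C^2$ and $|h_2c_2-h_2'c_2'|\leq \tfrac{tC^2}{N}\bigl(\tfrac{M_2QK^2}{NC\ell}+\Delta\bigr)$ coming from \eqref{hc-distance}. The essential new step you are missing is the preliminary gcd bound: setting $j=(h_2,c_2')$, $k=(h_2',c_2)$ and showing $jk\ll t^{56\delta}/\Delta_2$ by observing that for larger $jk$ the congruence would force the exact equation $\mathrm{h}'\mathrm{c}'-\mathrm{h}\mathrm{c}=\mathrm{h}\mathrm{h}'m_2$, which is incompatible with the size constraints when $m_2\neq 0$. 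Only after this reduction do Lemmas~4.1 and 4.2 of \cite{BI} apply, and crucially they count \emph{pairs} of Farey fractions with both the difference and the cross product controlled — a genuinely two-dimensional count in which the diagonal must be, and is, excluded by hand (it corresponds to $m_2=0$, which is not in this block). Your one-variable count with a frozen numerator, even setting aside the $\bar{\mu}_2$ issue, would not produce the extra factor $t^{-1/10}$-type saving that the BI machinery extracts from the pair structure, and so it cannot recover the $\delta<1/1200$ threshold.
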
 

\begin{proof}
Consider the case of small $\mu_2\sim M_2/\Delta_2$. Then we need to count the number of solutions to 
$$
\left\|\frac{\bar{h}_2}{c_2}-\frac{\bar{h}_2'}{c_2'}\right\|\leq \frac{\mu_2\Delta_2}{C^2},\;\;\;\text{and}\;\;\; |h_2c_2-h_2'c_2'|\leq \frac{tC^2}{N}\left(\frac{M_2QK^2}{NC\ell}+\Delta\right).
$$ 
The last inequality comes from \eqref{hc-distance}.
Let $j=(h_2,c_2')$ and $k=(h_2',c_2)$. First we will show that $jk$ can not be large. We write $h_2=j\mathrm{h}$, $c_2'=j\mathrm{c}'$, $h_2'=k\mathrm{h}'$ and $c_2=k\mathrm{c}$. Then the congruence in \eqref{counting-problem-final} reduces to
$$
\bar{\mathrm{h}}\mathrm{c}'-\bar{\mathrm{h}}'\mathrm{c}\equiv m_2\bmod{c_2c_2'},\;\;\;\text{or}\;\;\;\mathrm{h}'\mathrm{c}'-\mathrm{h}\mathrm{c}\equiv \mathrm{h}\mathrm{h}'m_2\bmod{c_2c_2'}.
$$
Now $\mathrm{h}\mathrm{c}\asymp \mathrm{h}'\mathrm{c}'\asymp tC^2/Njk$, and $\mathrm{h}\mathrm{h}'m_2\asymp t^2C^2M_2/N^2jk$. So if $jk\gg t^{56\delta}/\Delta_2$, then all the terms in the congruence have size smaller than $C^2$, and consequently the congruence reduces to an equation
$$
\mathrm{h}'\mathrm{c}'-\mathrm{h}\mathrm{c}= \mathrm{h}\mathrm{h}'m_2.
$$
It follows that
$$
|h_2c_2-h_2'c_2'|=jk|\mathrm{h}\mathrm{h}'m_2|\leq \frac{tC^2}{N}\left(\frac{M_2QK^2}{NC\ell}+\Delta\right),
$$ 
or
$$
1\leq \frac{QK^2}{tC\ell}+\frac{N\Delta}{tM_2}.
$$ 
But this is not possible, and hence we obtain the bound $jk\gg t^{56\delta}/\Delta_2$. \\

We now observe that 
$$
\frac{M_2QK^2}{NC\ell}+\Delta\ll \frac{M_2N^{1/2}}{Qt^{1/3-\delta}}+t^{-1/3}\ll \frac{M_2t^{-9\delta/5}}{N^{1/10}}+t^{-1/3} \ll t^{-1/10}M_2. 
$$ 
The calculations from Section~4 (in particular Lemmas~4.1 and 4.2) of \cite{BI} can now be readily used. We can not use the main theorem of that section, as we need to avoid the diagonal. We observe that cases~1 and 2, and the diagonal contributions in cases~3 and 4 of Lemma~4.1 of \cite{BI}, are not present in our count. Indeed in our case $jk\ll t^{56\delta}<Ct^{-\varepsilon}$ and so $\mathfrak{a}_1=\mathfrak{c}=1$ in the notation of Lemma~4.1 of \cite{BI} can not occur. Also the diagonal case $\mathfrak{a}_1=\mathfrak{a}$, $\mathfrak{c}_1=\mathfrak{c}$ in the notation of \cite{BI}, is ruled out as this in our case leads to 
$$
\bar{\mathrm{h}}\mathrm{c}'-\bar{\mathrm{h}}'\mathrm{c}\equiv 0\bmod{c_2c_2'},
$$
i.e. $m_2=0$ (as $m_2\sim M_2\ll t^{36\delta}\ll C^2t^{-\varepsilon}$). It follows that the number of $(\mathbf{v}_2,m_2)$ vectors is bounded by
\begin{align*}
\frac{t^{56\delta+\varepsilon}}{\Delta_2}\left\{M_2t^{-1/10}\frac{t^2C^2}{N^2}+t^{-1/10+36\delta}\frac{M_2}{C^2}\frac{t^2C^4}{N^2}+ \frac{M_2^2}{C^4}\frac{t^2C^4}{N^2}\right\}\ll M_2t^{92\delta-1/10+\varepsilon}\frac{t^2C^2}{\Delta_2 N^2}.
\end{align*}
Then given $u\neq 0$, we determine $c_1, c_1'$ from the equation $c_1'c_2-c_1c_2'=u$. Indeed $c_1$ is determined modulo $c_2$ by $c_1c_2'\equiv -u\bmod{c_2}$. Hence the number of $c_1$ is given by $O((c_2',c_2,u))=O((\delta_2,u))$. It follows that the number of $(c_1,c_1',u)$ is given by 
$$
\Delta_2\:\frac{C^3\ell t\Delta^2}{\Delta_1\Delta_3 QK^2}.
$$ 
Consequently in place of the bound given in Lemma~\ref{lemma:counting-problem} we get
\begin{align*}
\Delta_1  t^{92\delta-1/10+\varepsilon}M_2\frac{t^{7/3}C^3Q}{\ell \Delta_3 N^2K^2}\: \left(1+\frac{t^{1/3}N}{\Delta_1K^4}\right),
\end{align*}
as an upper bound for the count towards $N_{\neq 0}$. Then in place of \eqref{generic-count} we get
\begin{align*}
    \sqrt{N}HQ^2Kt^{1/3}\frac{(N\Delta_1)^{1/4}C^{3/8}}{\Delta_3^{1/4}K^{3/4}Q^{3/8}}\left(1+\frac{t^{1/12}N^{1/4}}{\Delta_1^{1/4}K}\right),
\end{align*}
which is seen to be satisfactory if $\delta<1/1200$.
\end{proof}

\bigskip
 We can now summarise the output of Sections~\ref{fe}--\ref{counting-problem} in form of a proposition.

\begin{proposition}
\label{prop-2}
Let $H=N/t^{1/3}\ll N^{1/2}t^{1/3-\delta}$. Suppose $Q$ and $K$ are chosen as in \eqref{pick-q} and \eqref{pick-k} respectively. Then
$$
\mathcal{F}\ll  \sqrt{N}HQ^2K t^{1/3-\delta}
$$ 
if $\delta<1/1200$.
\end{proposition}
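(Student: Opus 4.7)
The plan is to string together the chain of reductions established in Sections~\ref{fe}--\ref{counting-problem} and then collect the satisfactory bounds obtained in each regime, verifying that all the conditions on $\delta$ are compatible.

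First I would observe that by the lemmas at the start of Section~\ref{fe}, one has $\mathcal{F}\triangleleft \mathcal{D}\triangleleft \mathcal{O}^\star(C)$ for dyadic $C\ll Qt^\varepsilon$, provided the condition \eqref{2nd-cond-QK} holds (which, with the choices \eqref{pick-q}, \eqref{pick-k}, is ensured when $\delta<1/20$). Next, Lemma~\ref{lemma:dual-od-after-poisson} converts $\mathcal{O}^\star(C)$ into $\mathcal{O}^{\star\dagger}(C)$ via Poisson on the shift variable $h$. At this point the analysis splits by the size of $C$. For $C$ in the small-$C$ range delineated by \eqref{small-c-range-initial} and \eqref{small-c-range}, Voronoi summation on the $m$-sum (as described in the remark following Lemma~\ref{lemma:dual-od-after-poisson}) and the estimate \eqref{basic-bound-before-cauchy} yield a bound that is already satisfactory under \eqref{pick-q} and \eqref{pick-k}.

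For the complementary range of $C$, I would apply Cauchy--Schwarz (Section~\ref{cauchy-2-sec}) to detach $\lambda_F(m)$, open the square, and invoke Lemma~\ref{lemma:large-gap} to discard the contribution of $(c_1,c_2,m_1,m_2)$ pairs with large gap \eqref{small-gap-condition}. The small-gap contribution $\Theta_\ell$ is then handled by a second Cauchy--Schwarz followed by Poisson on $(m_1,m_2)$, as in Lemma~\ref{lemma:2nd-cauchy}. The integral $\mathfrak{I}$ is controlled by Lemmas~\ref{lemma:int-1} and \ref{lemma:int-2}, which impose the restrictions \eqref{c-restriction} and \eqref{m1-m2-rest} on the surviving frequencies.

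The counting problem in Section~\ref{counting-problem} is the heart of the argument and the main obstacle. Here I would decompose the count $N(C,M_1,M_2)$ by the gcd parameters $\delta_1,\delta_2,\delta_3$ and by whether the auxiliary variable $u=(c_1c_2'-c_1'c_2)/\delta_1\delta_2\delta_3$ vanishes. The zero-frequency contribution $(m_1,m_2)=(0,0)$ is satisfactory by Lemma~\ref{lemma:zero} under \eqref{pick-k} and $\delta<1/12$. For non-zero frequencies: the $u=0$ stratum is handled by Lemma~\ref{lemma:zero-frequency-contribution}, requiring $\delta<1/126$; the $u\neq 0$ stratum with large $\Delta_1\Delta_2\Delta_3\gg t^{18\delta}$, or with $N\ll t^{1-10\delta}$, is settled by the generic bound coming from Lemma~\ref{lemma:counting-problem}; the complementary case splits according to the size of $M_2\Delta_2$, using the Weil bound for Kloosterman sums (Lemma~\ref{lemma:counting-problem-2}, $\delta<1/600$) when $M_2\Delta_2$ is large, and the elementary counting result from \cite{BI} (Lemma~\ref{lemma:counting-problem-3}, $\delta<1/1200$) when $M_2\Delta_2$ is small.

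The final step is bookkeeping: combine every partial bound, take the worst constraint on $\delta$ (which is $\delta<1/1200$ from Lemma~\ref{lemma:counting-problem-3}), and sum trivially over the dyadic parameter $C$ and over $\ell$ (the $\ell^{-3/2}$ weight in \eqref{ineq-omega} guarantees convergence). This gives $\mathcal{F}\ll \sqrt{N}HQ^2K\,t^{1/3-\delta}$ for $\delta<1/1200$, completing the proof. The hardest genuinely new piece is the Weil-plus-reciprocity argument in Lemma~\ref{lemma:counting-problem-2} together with the appeal to the Bombieri--Iwaniec counting in Lemma~\ref{lemma:counting-problem-3}; everything else is essentially a careful accounting of oscillatory integrals and congruence conditions.
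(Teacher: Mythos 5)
Your proposal is correct and faithfully reproduces the paper's own chain of reductions: $\mathcal{F}\triangleleft\mathcal{D}\triangleleft\mathcal{O}^\star(C)$, Poisson on the shift to reach $\mathcal{O}^{\star\dagger}(C)$, the small-$C$ regime handled by Voronoi/the direct estimate, and the complementary range handled by double Cauchy, Poisson on $(m_1,m_2)$, and the case-by-case analysis of the counting problem with the constraints $\delta<1/12$, $\delta<1/126$, $\delta<1/600$, $\delta<1/1200$ coming from the successive strata. The bottleneck identified ($\delta<1/1200$ from the small-$M_2\Delta_2$ case via the Bombieri--Iwaniec count) and the final bookkeeping over dyadic $C$ and over $\ell$ are exactly as in the paper, so this is the same proof.
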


\bigskip



\end{document}